\numberwithin{equation}{section}
\newcommand{\hdi}[3]{\boldsymbol{H}_{#1}(\di^{#2},{#3})}
\newcommand{\hcu}[3]{\boldsymbol{H}_{#1}(\cur^{#2},{#3})}
\newcommand{\hdinorm}[2]{\|{#1}\|_{\hdi{}{}{{#2}}}}
\newcommand{\norm}[2]{\|{#1}\|_{#2}}
\newcommand{\lnorm}[2]{\norm{#1}{L^2({#2})}}
\newcommand{\hnorm}[2]{\norm{#1}{H^1({#2})}}
\newcommand{\di}{\operatorname{div}}
\newcommand{\cur}{\operatorname{curl}}
\newcommand{\hdivO}{\boldsymbol{H}(\di,\Omega)}
\newcommand{\dx}{\medspace d\boldsymbol{x}}
\newcommand{\cu}{\nabla\times}
\newcommand{\cuScal}{{\mathbf{curl}}}
\newcommand{\ao}{a_{\omega_i^*}}
\newcommand{\intOm}{\int_{\Omega}}
\newcommand{\ma}[1]{\textcolor{black}{#1}}
\def\Xint#1{\mathchoice
	{\XXint\displaystyle\textstyle{#1}}%
	{\XXint\textstyle\scriptstyle{#1}}%
	{\XXint\scriptstyle\scriptscriptstyle{#1}}%
	{\XXint\scriptscriptstyle\scriptscriptstyle{#1}}%
	\!\int}
\def\XXint#1#2#3{{\setbox0=\hbox{$#1{#2#3}{\int}$ }
		\vcenter{\hbox{$#2#3$ }}\kern-.6\wd0}}
\def\dashint{\Xint-}
\newtheorem{theorem}{Theorem}[section]
\newtheorem{lemma}[theorem]{Lemma}
\newtheorem{corollary}[theorem]{Corollary}
\newtheorem{proposition}[theorem]{Proposition}
\newtheorem{assumption}{Assumption A$\!\!\!$}
\theoremstyle{definition}
\newtheorem{remark}[theorem]{Remark}
\theoremstyle{plain}
\numberwithin{equation}{section}
\title{A Mixed Multiscale Spectral Generalized Finite Element Method}
\date{\today}
\let\@fnsymbol\@arabic
\author{Christian Alber\footnotemark[1] \and Chupeng Ma\footnotemark[2]
				\and
             Robert Scheichl\footnotemark[1]
             }
\begin{document}

\maketitle

\footnotetext[1]{Institute for Applied Mathematics and Interdisciplinary Center for Scientific Computing,
		Heidelberg University, Im Neuenheimer Feld 205, Heidelberg 69120, Germany 
  (\texttt{c.alber@uni-heidelberg.de}, \texttt{r.scheichl@uni-heidelberg.de})
                           }
\footnotetext[2]{School of Sciences, Great Bay University, Songshan Lake International Innovation Entrepreneurship A5, Dongguan 523000, China
  (\texttt{chupeng.ma@gbu.edu.cn}).
                          }

\begin{abstract}
We present a multiscale mixed finite element method for solving second order elliptic equations with general $L^{\infty}$-coefficients arising from flow in highly heterogeneous porous media. Our approach is based on a multiscale spectral generalized finite element method (MS-GFEM) and exploits the superior local mass conservation properties of mixed finite elements. Following the MS-GFEM framework, optimal local approximation spaces are built for the velocity field by solving local eigenvalue problems over generalized harmonic spaces. The resulting global velocity space is then enriched suitably to ensure inf-sup stability. We develop the mixed MS-GFEM for both continuous and discrete formulations, with Raviart-Thomas based mixed finite elements underlying the discrete method.
Exponential convergence with respect to local degrees of freedom is proven at both the continuous and discrete levels.
Numerical results are presented to support the theory and to validate the proposed method. 
\end{abstract} 

\bigskip

\section{Introduction}
\label{sec:intro}

One of the simplest models in porous media flow applications is Darcy's law \cite{whitaker1986flow}, yielding second order elliptic equations with typically strongly heterogeneous coefficients. The heterogeneities of the coefficients, stemming from heterogeneous porous media formations in, e.g., oil reservoirs, may span over several non-separated length scales. This multiscale coefficient structure poses numerical challenges for flow simulations. In applications, one is interested in solving the equations at the coarse scale. However, fine-scale coefficient variations often strongly influence global flow behaviour. Hence, conventional techniques would require resolving small-scale features to obtain reliable solutions, resulting in computationally intractable linear systems. To address this challenge, various approaches, including upscaling and multiscale methods, have been developed. Here, we focus on multiscale methods based on multiscale basis functions. The central idea of such methods is to construct coarse trial spaces consisting of problem adapted local basis functions that encode the fine-scale information. Additionally, in porous media flow applications, the Darcy velocity is the primary variable of interest and it is preferable for the computed Darcy velocity to be locally mass-conservative \cite{durlofsky1994accuracy}, motivating the use of mixed methods. 

Devising multiscale methods for Darcy's problems in mixed formulation is an active field of research. 
A pioneering work in the context of the Multiscale Finite Element Method (MsFEM \cite{hou1997multiscale}) is reported in \cite{chen2003mixed}, where local Neumann boundary value problems on coarse grid blocks are solved to construct multiscale FE bases for the velocity and the pressure is approximated by piecewise constant basis functions. Variants and further developments of the mixed MsFEM are presented in \cite{aarnes2004use,aarnes2006hierarchical,arbogast2007multiscale}, and similar coarse approximation spaces are adopted by multiscale methods \cite{arbogast2006subgrid,larson2009mixed,maalqvist2011multiscale} based on the Variational Multiscale Method (VMS \cite{hughes1995multiscale}). In \cite{chung2015mixed,chung2018constraint,chen2016least}, multiscale methods for flow equations in the mixed formulation are developed within the framework of the Generalized Multiscale Finite Element Method (GMsFEM \cite{efendiev2013generalized}). There, the multiscale basis functions are obtained by solving eigenproblems on a snapshot space. Another mixed multiscale method based on the Localized Orthogonal Decomposition (LOD \cite{maalqvist2014localization}) technique is proposed in \cite{hellman2016multiscale}. For more studies on this subject, see \cite{duran2019multiscale,wang2021comparison,yang2019multiscale,he2021generalized,guiraldello2018multiscale} and the references therein. We note that while there is an extensive literature on mixed multiscale methods, rigorous error estimates of these methods are scarce, especially for general rough coefficients. Earlier works in this direction were typically performed under the assumption of periodicity \cite{chen2003mixed,arbogast2006subgrid}. The error estimates in \cite{chung2015mixed,chung2018constraint,chen2016least,hellman2016multiscale} are presented for general coefficients without structural assumptions. However, the result in \cite{hellman2016multiscale} holds only in the two-dimensional case, and the rate of convergence in terms of the number of degrees of freedom is not derived in \cite{chung2015mixed,chung2018constraint,chen2016least}. 

In this paper, we design a novel mixed multiscale method for second order elliptic equations with general $L^{\infty}$-coefficients, based on the Multiscale Spectral Generalized Finite Element Method (MS-GFEM) first introduced in \cite{babuska2011optimal}. The MS-GFEM is designed in the framework of the GFEM \cite{melenk1996partition} with optimal local approximation spaces built from carefully designed local eigenproblems. The main idea of local approximations in MS-GFEM is to decompose the solution locally into a particular function and a 'harmonic' part. The particular function is defined to be a local solution of the underlying PDE with artificial boundary conditions, and the 'harmonic' part is approximated by means of the singular value decomposition of a compact operator. With such local approximations, the method achieves exponential convergence with respect to the number of local degrees of freedom. The main advantages of the MS-GFEM are two-fold. First, the local problems can be solved entirely in parallel. Secondly, the global coarse problem is typically of small size due to the local exponential convergence. See \cite{babuvska2020multiscale,babuvska2014machine,benezech2022scalable,ma2022error,ma2022novel,ma2023wavenumber,ma2022exponential,schleuss2022optimal} for improvements, generalizations, and practical implementations of the method. In particular, in \cite{ma2023unified}, an abstract framework applicable to various multiscale PDEs is established for the MS-GFEM and a sharper convergence rate is proved.

This paper aims to combine the superior local mass-conservation properties of mixed finite element methods with the advantages of the MS-GFEM. Following the ideas of the MS-GFEM, on each subdomain, we solve a local mixed problem accounting for the source term, and construct an optimal approximation space for solutions to the local homogeneous PDE. The local approximation space for the velocity field is built from selected eigenfunctions of a local eigenproblem over a subspace of divergence-free vector fields, while for the pressure the local approximation is obtained from the local velocity approximation via the solution of a local boundary value problem. The resulting global approximation spaces are enriched suitably to form the coarse trial spaces. We show that local approximation errors for the velocity field decay exponentially with the numbers of local basis functions as in the original MS-GFEM \cite{babuska2011optimal,ma2022novel}, and that the global error of our method (for both the velocity and pressure) is bounded by the local velocity approximation errors. Moreover, we extend the method to a discrete setting, as a model reduction method for solving mixed problems discretized with standard mixed finite elements on a fine grid, and obtain the same theoretical results. The results in this paper hold for general $L^{\infty}$-coefficients in two and three dimensions. In addition, thanks to the local particular functions, our method can straightforwardly handle fine-scale features in the problem caused by wells (modeled by the source term) in the context of hydrology and reservoir engineering, which typically require a separate treatment in other multiscale methods \cite{aarnes2004use,chen2003numerical}. 

This is the first time that the MS-GFEM is applied to multiscale problems in mixed formulation. Previous works on the method focused on elliptic problems in second-order formulation. In comparison, the design and analysis of the mixed MS-GFEM are significantly more challenging: (i) The coarse trial spaces require a careful design to ensure stability of the method. According to the Babuska--Brezzi theory, the pair of trial spaces should satisfy the inf-sup condition, which hinders us from simply using global approximation spaces as trial spaces as in the original MS-GFEM. For this purpose, guided by a careful analysis, we enrich the velocity space with the solution of a local problem for each local pressure basis function. We prove the inf-sup stability for the resulting global spaces, and demonstrate by numerical experiments that the enrichment is essential. We note that the enrichment step is inexpensive and remains fully parallel. Indeed, in each subdomain, the new local problems, after a discretization, lead to linear systems with the same matrix as that of the local problems for constructing the local pressure basis functions; they can be solved by direct methods. (ii) The proof of local exponential convergence of the mixed MS-GFEM is more tricky. In general, the key to this proof in MS-GFEM is to combine a Caccioppoli-type inequality and a weak approximation estimate -- two important properties of a generalized harmonic space associated with the problem. In the case of a second-order problem, the generalized harmonic space is a subspace of $H^{1}$, and the two properties can be proved directly using standard techniques. In the mixed case, however, the generalized harmonic space is a subspace of the space of divergence-free vector fields, and we have to lift it to the space of vector potentials for analysis. In addition, in the discrete setting, the vector potentials need to be further interpolated onto a N\'ed\'elec FE space for proving a discrete Caccioppoli inequality. The proof of exponential convergence in this setting thus needs a careful analysis of N\'ed\'elec FEs. The techniques of analysis and coarse space construction developed in this paper can thus also serve as a cornerstone for the mixed MS-GFEM in the context of other saddle point problems. 

The rest of this paper is structured as follows. In section \ref{sec:cont} the mixed MS-GFEM is introduced. We detail the construction of local approximation spaces and coarse trial spaces, and prove global approximation error estimates and quasi-optimality estimates. In section \ref{sec:discMSGFEM} we present the mixed MS-GFEM in a discrete setting with underlying Raviart-Thomas based mixed finite elements, along with discrete analogues of the results from section \ref{sec:cont}. Section \ref{sec:proofs} contains the proofs of the local exponential convergence of the mixed MS-GFEM in the continuous and discrete settings. Numerical experiments are presented in section \ref{sec:num} to evaluate the performance of the proposed method.

%
\section{Continuous mixed MS-GFEM}
\label{sec:cont}

\subsection{Mixed GFEM}
We consider the following second order elliptic equation in mixed form
\begin{equation}
	\label{dualStrongForm}
	\left\{\begin{array}{lll}
		A^{-1}\boldsymbol{u}^{\mathrm{e}} - \nabla p^{\mathrm{e}} &= 0 &\text{   in } \Omega, \\
		\operatorname{div}\boldsymbol{u}^{\mathrm{e}} &= -f  &\text{ in } \Omega, \\
		\boldsymbol{u}^{\mathrm{e}}\cdot\boldsymbol{\nu} &= g_N &\text{ on } \partial\Omega,
	\end{array}\right.
\end{equation}
where $\boldsymbol{\nu}$ denotes the outer unit normal to $\partial\Omega$. We assume that $\Omega\subset\mathbb{R}^d \,(d=2,3)$ is a bounded Lipschitz domain and that $A\in L^{\infty}(\Omega)$ satisfies
\begin{equation}
	0 < \alpha_0 < A < \alpha_1
\end{equation} 
for some constants $\alpha_0,\alpha_1\in\mathbb{R}$. Further, we suppose that $g_N\in L^2(\partial\Omega)$ and $f\in L^2(\Omega)$ satisfy the compatibility condition
\begin{equation*}
	\int_{\Omega} f\medspace d\boldsymbol{x}+ \int_{\partial\Omega}g_N \medspace ds= 0.
\end{equation*}
We consider the spaces
\begin{align}
	&\hdi{}{}{\Omega} := \{\boldsymbol{u}\in (L^2(\Omega))^d \medspace : \medspace \di\boldsymbol{u}\in L^2(\Omega)\},\nonumber \\
 &\hdi{0}{}{\Omega} := \{\boldsymbol{u} \in\hdi{}{}{\Omega}\medspace :\medspace\boldsymbol{u}\cdot\boldsymbol{\nu}= 0 \quad \text{on}\;\,\partial \Omega\}, \label{global_spaces} \\
     &\hdi{}{0}{\Omega} := \{\boldsymbol{u}\in \hdi{}{}{\Omega} \medspace : \medspace \operatorname{div}\boldsymbol{u} = 0\}, \nonumber
\end{align}
equipped with the inner product \nomenclature{$\hdi{}{}{\Omega}$}{Vector fields with divergence in $L^2(\Omega)$}
\begin{equation*}
	(\boldsymbol{u},\boldsymbol{v})_{\hdi{}{}{\Omega}} := (\boldsymbol{u},\boldsymbol{v})_{L^2(\Omega)} +(\di\boldsymbol{u}, \di\boldsymbol{v})_{L^2(\Omega)}.
\end{equation*}
We denote the norm induced by the inner product $(\cdot,\cdot)_{\hdi{}{}{\Omega}}$ by $\|\cdot\|_{\hdi{}{}{\Omega}}$.
\nomenclature{$\boldsymbol{u}\cdot\nu$}{Normal trace of $\boldsymbol{u}$}
\nomenclature{$H^{1/2}(\partial\Omega)$}{Fractional Sobolev space on boundary}
\nomenclature{$H^{-1/2}(\partial\Omega)$}{Dual space of $H^{1/2}(\partial\Omega)$}
\nomenclature{$\langle\cdot,\cdot\rangle_{\partial\Omega}$}{Duality pairing between $H^{-1/2}(\partial\Omega)$ and $H^{1/2}(\partial\Omega)$}
\nomenclature{$H^1_{\Gamma}(\Omega)$}{Sobolev funtions with vanishing trace on $\Gamma$}
\nomenclature{$\hdi{}{0}{\Omega} $}{Divergence free vector fields}
\nomenclature{$\hdi{\Gamma}{}{\Omega} $}{Vector fields with vanishing normal trace on $\Gamma$}
\nomenclature{$\hdi{0}{}{\Omega}$}{Vector fields with vanishing normal trace on $\partial\Omega$}
\nomenclature{$L_0^2(\Omega)$}{Lebesque functions with vanishing mean}
The variational formulation of (\ref{dualStrongForm}) is defined by: Find $\boldsymbol{u}^{\mathrm{e}} \in \hdi{}{}{\Omega}$ and $p^{\mathrm{e}}\in L^2_0(\Omega):=\{p\in L^{2}(\Omega):\medspace \int_{\Omega}p \dx =0 \}$ such that
\begin{equation}
	\label{dualWeakCont}
	\left\{\begin{array}{lll}
		a(\boldsymbol{u}^{\mathrm{e}},\boldsymbol{v}) + b(\boldsymbol{v},p^{\mathrm{e}}) &= 0 &\forall \boldsymbol{v}\in \hdi{0}{}{\Omega}, \\[1mm]
		b(\boldsymbol{u}^{\mathrm{e}},q) &= -(f,q)_{L^2(\Omega)}   & \forall q\in L^2(\Omega), \\[1mm]
		\boldsymbol{u}^{\mathrm{e}}\cdot\boldsymbol{\nu} &= g_N & \text{on}\;\, \partial\Omega,
	\end{array}\right.
\end{equation}
where
\begin{align}
	\label{mixedBilinearForms}
	a(\boldsymbol{u},\boldsymbol{v}) = \intOm A^{-1}\boldsymbol{u}\cdot\boldsymbol{v}\dx,
	\quad b(\boldsymbol{u},q) = \intOm q \operatorname{div}\boldsymbol{u} \dx.
\end{align}

Our goal is to use the ideas of the MS-GFEM to design an efficient method for approximating $(\boldsymbol{u}^{\mathrm{e}},p^{\mathrm{e}})$. The core is to construct multiscale approximation spaces $\boldsymbol{V}^{\mathrm{MS}}\subset \hdi{0}{}{\Omega}$ and $Q^{\mathrm{MS}}\subset L^2_0(\Omega)$, which will be used as test and trial spaces in (\ref{dualWeakCont}). To obtain an efficient and stable method, based on the general theory of mixed finite element methods, we have the following requirements for the spaces $\boldsymbol{V}^{\mathrm{MS}}$ and $Q^{\mathrm{MS}}$:
\begin{itemize}
	\item The bilinear form $a(\cdot,\cdot)$ has to be coercive on the kernel of the discrete divergence operator associated with the pair $\boldsymbol{V}^{\mathrm{MS}}/Q^{\mathrm{MS}}$.
	\item The pair $\boldsymbol{V}^{\mathrm{MS}}/Q^{\mathrm{MS}}$ has to be (uniformly) inf-sup stable with respect to $b(\cdot,\cdot)$.  
	\item The spaces $V^{\mathrm{MS}}$ and $Q^{\mathrm{MS}}$ should have good approximation properties, and be low-dimensional. 
\end{itemize}

To tackle coercivity, for $\gamma>0$, we introduce a modified variational formulation that reads: 
Find $\boldsymbol{u}^{\mathrm{e}} \in \hdivO$ and $p^{\mathrm{e}}\in L_0^2(\Omega)$ such that
\begin{equation}
	\label{globWeakAug}
	\left\{\begin{array}{lll}
		a^\gamma(\boldsymbol{u}^{\mathrm{e}},\boldsymbol{v}) + b(\boldsymbol{v},p^{\mathrm{e}}) &= -\gamma b(\boldsymbol{v},f)
		&\forall \boldsymbol{v}\in \hdi{0}{}{\Omega}, \\[1mm]
		b(\boldsymbol{u}^{\mathrm{e}},q) &= -(f,q)_{L^2(\Omega)}  & \forall q\in L^2(\Omega), \\[1mm]
		\boldsymbol{u}^{\mathrm{e}}\cdot\boldsymbol{\nu} &= g_N &\text{on } \,\partial\Omega.\\
	\end{array}\right.
\end{equation}
Here $a^{\gamma}(\boldsymbol{v},\boldsymbol{w}) := a(\boldsymbol{v},\boldsymbol{w}) + \gamma (\di\boldsymbol{v},\di\boldsymbol{w})_{L^2(\Omega)}$. Clearly, the new bilinear form $a^{\gamma}$ is globally elliptic on $\hdi{}{}{\Omega}$ and the inf-sup condition is satisfied for the pair $\hdi{0}{}{\Omega}/L^2_0(\Omega)$. Hence, problem (\ref{globWeakAug}) is well-posed by \cite[Theorem 4.2.3]{boffi2013mixed}. Moreover, it is easy to verify that problems (\ref{dualWeakCont}) and (\ref{globWeakAug}) have the same solution; see \cite[Chapter 1.5]{boffi2013mixed}. 

Let the multiscale test spaces $\boldsymbol{V}^{\mathrm{MS}}$ and $Q^{\mathrm{MS}}$ be given. Moreover, we let $\boldsymbol{u}^{\mathrm{par}}\in \hdi{}{}{\Omega}$ with $\boldsymbol{u}^{\mathrm{par}}\cdot \boldsymbol{\nu} = g_N$ on $\partial \Omega$ and $p^{\mathrm{par}}\in L^2_0(\Omega)$. We define the finite-dimensional Galerkin approximation of (\ref{globWeakAug}) by: Find $\boldsymbol{u}^{\mathrm{G}} = \boldsymbol{u}^{\mathrm{par}}+\boldsymbol{u}^{\mathrm{MS}}$ and $p^{\mathrm{G}}=p^{\mathrm{par}}+p^{\mathrm{MS}}$, with $\boldsymbol{u}^{\mathrm{MS}}\in \boldsymbol{V}^{\mathrm{MS}}$ and $p^{\mathrm{MS}}\in Q^{\mathrm{MS}}$, such that
\begin{equation}
	\label{GFEM}
	\left\{\begin{array}{lll}
		a^\gamma(\boldsymbol{u}^{\mathrm{MS}},\boldsymbol{v}) + b(\boldsymbol{v},p^{\mathrm{MS}}) &= -\gamma b(\boldsymbol{v},f) -(a^\gamma(\boldsymbol{u}^{\mathrm{par}},\boldsymbol{v}) + b(\boldsymbol{v},p^{\mathrm{par}}))
		&\forall \boldsymbol{v}\in \boldsymbol{V}^{\mathrm{MS}}, \\
		b(\boldsymbol{u}^{\mathrm{MS}},q) &= -(f,q)_{L^2(\Omega)} - b(\boldsymbol{u}^{\mathrm{par}},q)  & \forall q\in Q^{\mathrm{MS}}. \\
	\end{array}\right.
\end{equation}
The multiscale spaces and particular functions above will be constructed within the framework of the GFEM by gluing local spaces and local particular functions together. 

To achieve this, we first partition the computational domain $\Omega$ into pairwise disjoint sets $\{\omega_i^0\}_{i=1}^M$,
which are then extended to construct an overlapping decomposition $\{\omega_i\}_{i=1}^M$ of the domain. We denote by $\{\chi_i\}_{i=1}^M$ a partition of unity subordinate to this covering. For later use, we introduce a set of Lipschitz domains $\{\omega^{\ast}_i\}_{i=1}^M$ satisfying $\omega_i \subset \omega_i^*\subset\Omega$ and $\operatorname{dist}(\omega_i, \partial\omega_i^*\cap\Omega)>0$ for $i=1,\cdots,M$, which will be referred to as \textit{oversampling domains} in the rest of this paper.

For the partition of unity functions we assume
\begin{align}
	\label{PU}
	&0 \leq \chi_{i}(\boldsymbol{x}) \leq 1, \quad \sum_{i=1}^{M} \chi_{i}(\boldsymbol{x})=1, \quad \forall \boldsymbol{x} \in \Omega,\nonumber \\
	&\chi_{i}(\boldsymbol{x})=0, \quad \forall \boldsymbol{x} \in \Omega \setminus \omega_{i}, \quad i=1, \cdots, M, \\
	&\chi_{i} \in C^{1}\left(\omega_{i}\right),\quad \max _{\boldsymbol{x} \in \Omega}\left|\nabla \chi_{i}(\boldsymbol{x})\right| \leq \frac{C_{1}}{\operatorname{diam}\left(\omega_{i}\right)}, \quad i=1, \cdots, M .\nonumber
\end{align}
Moreover, we assume point-wise overlap conditions for $\{\omega_i\}_{i=1}^M$ and $\{\omega_i^*\}_{i=1}^M$:
\begin{equation}
	\label{pointwise_overlap}
	\exists\kappa, \kappa^*\in\mathbb{N}\quad \forall\boldsymbol{x}\in\Omega\quad \text{card}\{i\medspace|\medspace\boldsymbol{x}\in\omega_i\} \leq \kappa, \quad
	\text{card}\{i\medspace|\medspace\boldsymbol{x}\in\omega_i^*\} \leq \kappa^*,
\end{equation}
and define the following local spaces:
\begin{equation}\label{local_spaces}
\begin{array}{ll}
&H^{1}_{N}(\omega_i) := \{q\in H^{1}(\omega_i):\medspace q = 0\quad\text{on}\;\,\partial\omega_i\cap\partial\Omega \}, \\[2mm]
&\hdi{N}{}{\omega_i} := \{\boldsymbol{u} \in\hdi{}{}{\omega_i} :\medspace\boldsymbol{u}\cdot\boldsymbol{\nu}= 0 \quad \text{on}\;\,\partial\omega_i\cap\partial\Omega\}.
\end{array}
\end{equation}

Given local approximation spaces $\boldsymbol{S}^{\mathrm{v}}_{n_i}(\omega_i)\subset \hdi{N}{}{\omega_i}$, $S^{\mathrm{p}}_{n_i}(\omega_i^0)\subset L^{2}_{0}(\omega_i^0)$, and local velocity enrichment spaces $\boldsymbol{V}_{n_i}^{\operatorname{en}}(\omega_i^0)\subset \hdi{0}{}{\omega_i^0}$ with $\di \boldsymbol{V}_{n_i}^{\operatorname{en}}(\omega_i^0) = S^{\mathrm{p}}_{n_i}(\omega_i^0)$, we set $n=\sum_{i=1}^M n_i$ and define the global spaces 
\begin{align}
    \label{def_glob_spaces}
	&\boldsymbol{S}^{\mathrm{v}}_n(\Omega) := \Big\{ \sum_{i=1}^M \chi_i \boldsymbol{v}^i \medspace:\medspace \boldsymbol{v}^i\in \boldsymbol{S}^{\mathrm{v}}_{n_i}(\omega_i) \Big\}	
	,\quad S_n^{\mathrm{p}}(\Omega) := \Big\{ \sum_{i=1}^M {p^i} \medspace:\medspace {p^i}\in S^{\mathrm{p}}_{n_i}(\omega_i^0) \Big\},\nonumber\\
	&\boldsymbol{V}_n^{\operatorname{en}}(\Omega) := \Big\{ \sum_{i=1}^M  \boldsymbol{u}^i \medspace:\medspace \boldsymbol{u}^i\in \boldsymbol{V}_{n_i}^{\operatorname{en}}(\omega_i^0) \Big\},
\end{align}
The local spaces $\boldsymbol{S}^{\mathrm{v}}_{n_i}(\omega_i)$, $S^{\mathrm{p}}_{n_i}(\omega_i^0)$, and $\boldsymbol{V}_{n_i}^{\operatorname{en}}(\omega_i^0)$ will be defined in the next subsection. Note that we extend functions in $S^{\mathrm{p}}_{n_i}(\omega_i^0)$ and $\boldsymbol{V}_{n_i}^{\operatorname{en}}(\omega_i^0)$ by zero to $\Omega$. By construction, it is clear that $\boldsymbol{S}^{\mathrm{v}}_n(\Omega),\,\boldsymbol{V}_n^{\operatorname{en}}(\Omega)\subset \hdi{0}{}{\Omega}$ and $S_n^{\mathrm{p}}(\Omega)\subset L_{0}^{2}(\Omega)$. Similarly, given local particular functions $\boldsymbol{u}_{i}^{\mathrm{par}}\in \hdi{}{}{\omega_i}$ with $\boldsymbol{u}_{i}^{\mathrm{par}}\cdot \boldsymbol{\nu} = g_N$ on $\partial \omega_i\cap\partial \Omega$ and $p_{i}^{\mathrm{par}}\in L^{2}_{0}(\omega_i^0)$, we define the global particular functions $\boldsymbol{u}^{\mathrm{par}}$ and $p^{\mathrm{par}}$ by
\begin{align}
	\label{def_glob_par}
	\boldsymbol{u}^{\mathrm{par}} := \sum\limits_{i=1}^M \chi_i \boldsymbol{u}_i^{\mathrm{par}}, \qquad p^{\mathrm{par}} := \sum\limits_{i=1}^M p_i^{\mathrm{par}}.
\end{align} 
\begin{remark}
Note that functions in $S^{\mathrm{p}}_{n_i}(\omega_i^0)$ and the local particular functions $p_i^{\mathrm{par}}$ are only defined on the non-overlapping subdomains $\omega_i^0$, and are simply added together to form the global functions. This is reasonable since these global functions only need to be in $L^2(\Omega)$.
\end{remark}

When proving approximation results for the pressure, we will see that piecewise constant functions on $\Omega$ with respect to the partition $\{ \omega_i^0\}_{i=1}^{M}$ need to be added to the global pressure approximation space. In doing so, we need to enrich the velocity space accordingly to guarantee inf-sup stability. Let $Q_0^{\operatorname{RT}}$ be the space of functions on $\Omega$ which are piecewise constant with respect to the partition $\{ \omega_i^0\}_{i=1}^{M}$ and have vanishing mean, and let $V^{\operatorname{RT}}_0\subset \hdi{0}{}{\Omega}$ satisfy $\di V^{\operatorname{RT}}_0 = Q_0^{\mathrm{RT}}$. We define the desired multiscale approximation spaces by
\begin{align}
	\label{def_MS_spaces}
	\boldsymbol{V}^{\mathrm{MS}} 
	:= \boldsymbol{S}^{\mathrm{v}}_{n}(\Omega)	
	+ \boldsymbol{V}_{n}^{\operatorname{en}}(\Omega)
	+ \boldsymbol{V}^{\operatorname{RT}}_0
	\quad\text{and}\quad 
	Q^{\mathrm{MS}} 
	:= S^{\mathrm{p}}_{n}(\Omega)	
	+ Q^{\operatorname{RT}}_0.    
\end{align}

\begin{remark}\label{raviart-thomas-pair}
	In practice, the partition $\{\omega_i^0\}_{i=1}^M$ is often a coarse mesh consisting of simplicial or quadrilateral (hexahedral) elements and $V_0^{\mathrm{RT}}$ can be chosen to be the zeroth order Raviart-Thomas space on this coarse mesh. For more complicated partitions, we can solve local problems over the union of neighbouring subdomains to construct $V_0^{\mathrm{RT}}$.
\end{remark} 

We conclude this subsection with a few comments on the ideas behind the definition of the multiscale spaces.
	\begin{itemize}
		\item $\boldsymbol{S}^{\mathrm{v}}_n(\Omega)$ is used to approximate $\boldsymbol{u}^{\mathrm{e}}-\boldsymbol{u}^{\mathrm{par}}$. See \cref{theorem_nearly_exponential} and \cref{GlobLocErr}.
		\item $S^{\mathrm{p}}_n(\Omega)$ is used to approximate $p^{\mathrm{e}}-p^{\mathrm{par}}$. However, we need to add the space $Q_0^{\mathrm{RT}}$ so that the global pressure approximation error can be bounded by local velocity approximation errors; see \cref{globPresEr}. 
		\item  To obtain inf-sup stability, we enrich the velocity space by $\boldsymbol{V}_n^{\operatorname{en}}(\Omega)$ and $\boldsymbol{V}^{\operatorname{RT}}_0$; see \cref{IS_MS}.
	\end{itemize}

\subsection{Local approximations}
In this subsection, we detail the construction of the local particular functions and local approximation spaces. As in the MS-GFEM, the local particular functions are obtained by solving the underlying PDE on the oversampling domains with artificial boundary conditions. It turns out that the difference between the exact velocity solution and a local velocity solution solves the PDE (locally) with trivial right hand side, which motivates the definition of a generalized harmonic space. Then, the construction of the velocity approximation space boils down to identifying a low-dimensional approximation space for the generalized harmonic space. This will be achieved by the singular value decomposition of a compact operator whose singular vectors are selected to build the desired space. Given the velocity approximation spaces, we are able to define the local pressure approximation spaces by solving suitable local problems. All this can be constructed entirely in parallel without any communication.
\subsubsection{Local particular functions}
\label{subsec_locPart}

For arbitrary subsets $\omega\subset \Omega$, we define 
\begin{equation*}
	a_{\omega}(\boldsymbol{u},\boldsymbol{v}):= \int_{\omega} A^{-1}\boldsymbol{u}\cdot\boldsymbol{v} \dx, \qquad \|\boldsymbol{u}\|_{L^2(\omega;a)}:=\sqrt{a_{\omega}(\boldsymbol{u},\boldsymbol{u})},\qquad
	b_{\omega}(\boldsymbol{v},q) := \int_{\omega}q \di \boldsymbol{v} \dx.
\end{equation*}
To define the local particular functions, we recall the oversampling domains $\{\omega_i^{\ast}\}$ and consider the problem of finding ${\boldsymbol{{\psi}}_i} \in \hdi{}{}{\omega_i^*}$ and $ \phi_i \in L_0^2(\omega_i^*)$ such
that 
\begin{equation}
	\label{partWeakNeu}
	\left\{\begin{array}{lll}
		a_{\omega_i^*}({\boldsymbol{{\psi}}_i},\boldsymbol{v}) +b_{\omega_i^*}(\boldsymbol{v},\phi_i) &= 0 &\forall \boldsymbol{v}\in\hdi{0}{}{\omega_i^*}, \\
		b_{\omega_i^*}({\boldsymbol{{\psi}}_i},q) &= -(f,q)_{L^2(\omega_i^*)}  &\forall q \in L^2( \omega_i^*), \\
		{\boldsymbol{{\psi}}_i}\cdot\boldsymbol{\nu} &= g_N &\text{ on } \partial\Omega\cap \partial\omega_i^*,\\
		{\boldsymbol{{\psi}}_i}\cdot\boldsymbol{\nu} &= C_{comp} &\text{ on } \Omega\cap \partial\omega_i^*,
	\end{array}\right.
\end{equation}
where $C_{comp}=- \frac{1}{|\Omega\cap\partial\omega_i^*|}\left(\int_{\partial\Omega\cap\partial\omega_i^*}g_N\medspace ds +\int_{\omega_i^*}f \medspace d\boldsymbol{x}\right)$. Here we impose constant normal flux on the interior boundary such that the compatibility condition is satisfied. We define the local particular vector field and local particular pressure by 
\begin{equation}
	\label{def_loc_part}
	\boldsymbol{u}_i^{\mathrm{par}} := {\boldsymbol{{\psi}}_i}|_{\omega_i} 
	,\qquad
	p^{\mathrm{par}}_i :=
	\phi_i|_{\omega^0_i} - \dashint_{\omega_i^0}\phi_i\dx,
\end{equation}
where $\dashint_{\omega_i^0}\phi \dx$ denotes the mean value of the function $\phi$ on $\omega_i^0$.

By using appropriate test functions in (\ref{partWeakNeu}) and (\ref{dualWeakCont}) and recalling (\ref{local_spaces}), we deduce that $\boldsymbol{u}^{\mathrm{e}}|_{\omega_i^*} - \boldsymbol{{\psi}}_i$ lies in the \textbf{generalized $a$-harmonic space}:
\begin{equation*}
	\boldsymbol{H}_a(\omega_i^*) := \big\{\boldsymbol{u}\in\hdi{N}{0}{\omega_i^*}\medspace: \medspace\ao(\boldsymbol{u},\boldsymbol{v}) = 0 \quad  \forall\boldsymbol{v}\in\hdi{0}{0}{\omega_i^*}   \big\},
\end{equation*}
where $\hdi{N}{0}{\omega_i^*}$ and $\hdi{0}{0}{\omega_i^*}$ are defined following (\ref{global_spaces}) and (\ref{local_spaces}).
\begin{remark}
In problem (\ref{partWeakNeu}), we could also impose a zero boundary condition on $\Omega\cap\partial\omega_i^*$ for the pressure, instead of the boundary condition for the normal flux. The problem would then be to find $\boldsymbol{{\psi}}_i\in \hdi{}{}{\omega_i^*}$ and $\phi_i\in L^2(\omega_i^*)$ such that
	\begin{equation}
		\label{partWeakDir}
		\left\{\begin{array}{lll}
			a_{\omega_i^*}({\boldsymbol{{\psi}}_i},\boldsymbol{v}) +b_{\omega_i^*}(\boldsymbol{v},\phi_i) &= 0 &\forall \boldsymbol{v}\in\hdi{N}{}{\omega_i^*}, \\
			b_{\omega_i^*}({\boldsymbol{{\psi}}_i},q) &= -(f,q)_{L^2(\omega_i^*)}  &\forall q \in L^2( \omega_i^*), \\
			{\boldsymbol{{\psi}}_i}\cdot\boldsymbol{\nu} &= g_N &\text{ on } \partial\Omega\cap \partial\omega_i^*.
		\end{array}\right.
	\end{equation}
	Again, we see that $\boldsymbol{u}^{\mathrm{e}}|_{\omega_i^*}-\boldsymbol{{\psi}}_i\in \boldsymbol{H}_a(\omega_i^*)$. For the analysis (\ref{partWeakNeu}) is easier because we avoid technical complications of dealing with normal traces on parts of the boundary. However, (\ref{partWeakDir}) is easier for implementation since it is not necessary to compute $C_{comp}$. 
\end{remark}

\subsubsection{Local approximation spaces}
\label{sssec:local_approximation_spaces}
For convenience, we drop the subdomain index $i$ in $\omega_i,\omega_i^*$ and so forth. We have shown that the velocity solution $\boldsymbol{u}^{\mathrm{e}}|_{\omega^{\ast}}$ can be decomposed into two parts, the solution of a local problem and a vector field in $\boldsymbol{H}_a(\omega^*)$. To approximate the second part, we follow the ideas of the MS-GFEM to find optimal (finite-dimensional) spaces for approximating $\boldsymbol{H}_a(\omega^*)$ in $\omega$. To do this, let us introduce a new inner product 
\begin{equation*}
	(\boldsymbol{u},\boldsymbol{v})_{\hdi{}{}{\omega^*;a}} := (A^{-1}\boldsymbol{u},\boldsymbol{v})_{L^2(\omega^*)} + (\di\boldsymbol{u},\di\boldsymbol{v})_{L^2(\omega^*)}
\end{equation*}
for $\hdi{}{}{\omega^*}$, and denote the induced norm by $\hdinorm{\cdot}{\omega^*;a}$. Moreover, we consider the weighted $L^2$-norm
\begin{equation*} \lnorm{\cdot}{\omega^*;a}:=\lnorm{A^{-1/2}\cdot}{\omega^*}.  
\end{equation*}

A key step in finding the optimal approximation spaces is to show that the operator
\begin{equation}
	\label{defP}
	P : (\boldsymbol{H}_a(\omega^*), \hdinorm{\cdot}{\omega^*;a}) \to (\boldsymbol{H}_a(\omega), \hdinorm{\cdot}{\omega;a}), \quad \boldsymbol{v}\mapsto {\boldsymbol{v}|}_{\omega}
\end{equation}
is compact.	
For this purpose, a general framework has been provided in \cite{babuska2011optimal,ma2022novel,ma2023unified} which is based on a Caccioppoli type inequality and certain compact embedding results (e.g., from $H^1(\omega^*)$ to $L^2(\omega^*)$ in the scalar elliptic setting). However, when applying the framework to our setting, we immediately run into difficulties. First, the proof of Caccioppoli type inequalities relies on an application of the product rule for differentiation, and thus requires that the associated bilinear forms involve differential operators. In our case, however, the bilinear form $a_{\omega^{\ast}}(\cdot,\cdot)$ is just a weighted $L^2$ inner product. Secondly, functions in $\hdi{}{}{\omega^*}$ generally have much lower regularity than $H^1(\omega^*)$, and the embedding $\hdi{}{}{\omega^*} \to (L^2(\omega^*))^d$ is not compact. The idea to circumvent these issues is to make use of the fact that vector fields in $\boldsymbol{H}_a(\omega^*)$ are divergence free and, depending on the dimension $d$, go to their vector potentials or stream functions. To this end, we make a mild assumption that the oversampling domain $\omega^{\ast}$ satisfies the hypotheses on $\Omega$ in \cref{exVecPotnew}.

Let us first look at the two-dimensional case when divergence-free vector fields can be represented by stream functions (see \cite{girault2012finite} for definitions). Define 
\begin{equation*}
	\tilde{H}_a(\omega^*):= \{\phi \in H_N^1(\omega^*) \medspace:\medspace a_{\omega^*}(\cuScal\,\phi, \cuScal\, {\psi}) =0 \quad \forall {\psi}\in H_0^1(\omega^*)  \},
\end{equation*}
where 
$
	\cuScal \,\phi := (\frac{\partial\phi}{\partial x_2}, -\frac{\partial\phi}{\partial x_1}).
$ 
Then, by \cref{exVecPotnew}, $\boldsymbol{H}_a(\omega^*) = \cuScal\,\tilde{H}_a(\omega^*)$. A direct calculation shows that
\begin{equation}\label{stream-functions}
	\tilde{H}_a(\omega^*)=  H_{A^{-1}}(\omega^*):=\{\phi \in H_N^1(\omega^*) \medspace:\medspace a_{\omega^*}(\nabla\phi, \nabla {\psi}) =0 \quad \forall {\psi}\in H_0^1(\omega^*)  \}.
\end{equation}

\noindent In the three-dimensional case, we consider the spaces
\begin{equation}
\begin{array}{ll}
     & \hcu{}{}{\omega^{\ast}} := \big\{\boldsymbol{v}\in (L^2(\omega^{\ast}))^3 \medspace :\medspace \cu\boldsymbol{v}\in (L^2(\omega^{\ast}))^3\big\}, \\[2mm]
     & \hcu{0}{}{\omega^{\ast}} := \big\{\boldsymbol{v}\in\hcu{}{}{\omega^{\ast}} \medspace : \medspace \boldsymbol{v}\times \boldsymbol{\nu} = 0\quad\text{on}\,\;\partial \omega^{\ast}\big\},
\end{array}
\end{equation}
equipped with the inner product
$
	(\boldsymbol{u},\boldsymbol{v})_{\hcu{}{}{\omega^{\ast}}}:= (\boldsymbol{u},\boldsymbol{v})_{L^2(\omega^{\ast})} + (\cu\boldsymbol{u}, \cu\boldsymbol{v})_{L^2(\omega^{\ast})}.
$
Similar to $\tilde{H}_a(\omega^*)$, we define
\begin{align*}
	\boldsymbol{\tilde{H}}_a(\omega^*) := \big\{\boldsymbol{\tilde{u}}\in (H_{N}^1(\omega^*))^3  \medspace : \medspace  a_{\omega^*}(\cu\boldsymbol{\tilde{u}},\cu{\boldsymbol{\tilde{v}}}) = 0 \quad \forall{\boldsymbol{\tilde{v}}}\in\hcu{0}{}{\omega^*}
	\big\}.
\end{align*}
Again by \cref{exVecPotnew}, we see that the space $\boldsymbol{\tilde{H}}_a(\omega^*)$ is the lifting of $\boldsymbol{H}_a(\omega^*)$ to vector potentials, i.e., $\boldsymbol{H}_a(\omega^*) = \cu \boldsymbol{\tilde{H}}_a(\omega^*)$. Now we can prove Caccioppoli type inequalities for $\tilde{H}_a(\omega^*)$ and $\boldsymbol{\tilde{H}}_a(\omega^*)$, which are given in the following lemma and will play a fundamental role in the subsequent analysis. The proof is given in section~\ref{sec:proofs_nearly_continuous}.

\begin{lemma}
	\label{Caccio}
	Assume that $\eta\in W^{1,\infty}(\omega^*)$ with
	$\eta = 0$ on $\Omega\cap\partial\omega^*$.
	\begin{itemize}
		\item [(i)] Let $d=2$ and $\phi,\psi
		\in \tilde{H}_a(\omega^*)$. Then
		\begin{equation*}
			a_{\omega^{\ast}}(\cuScal (\eta \phi),\,\cuScal(\eta \psi))
			= a_{\omega^{\ast}}(\phi\cuScal\,\eta,\,\psi\cuScal\,\eta).
		\end{equation*}
		In particular,
		\begin{equation*}
			\lnorm{\cuScal(\eta\phi)}{\omega^*;a}
			\leq \norm{\nabla\eta}{L^{\infty}(\omega^*)} \lnorm{\phi}{\omega^*;a}.
		\end{equation*}
		\item [(ii)]Let $d=3$ and ${\boldsymbol{\tilde{u}}}, {\boldsymbol{\tilde{v}}}
		\in \boldsymbol{\tilde{H}}_a(\omega^*)$. Then
		\begin{equation*}
			a_{\omega^{\ast}}(\cu (\eta {\boldsymbol{\tilde{u}}}),\, \cu (\eta {\boldsymbol{\tilde{v}}}))
			= a_{\omega^{\ast}}({\boldsymbol{\tilde{u}}}\times
			\nabla\eta,\,{\boldsymbol{\tilde{v}}}\times \nabla\eta).
		\end{equation*}
		In particular,
		\begin{equation*}
			\lnorm{\cu(\eta{\boldsymbol{\tilde{u}}})}{\omega^*;a}
			\leq \norm{\nabla\eta}{L^{\infty}(\omega^*)} \lnorm{{\boldsymbol{\tilde{u}}}}{\omega^*;a}.
		\end{equation*}
	\end{itemize}
\end{lemma}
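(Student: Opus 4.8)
\emph{Proof sketch.} I would prove (i) and (ii) by the same device, treated in parallel: the identities are the crux, and the ``in particular'' bounds follow from them at once by setting $\psi=\phi$ (resp.\ $\boldsymbol{\tilde{v}}=\boldsymbol{\tilde{u}}$), writing $\|\cdot\|_{L^2(\omega^*;a)}^2=a_{\omega^*}(\cdot,\cdot)$, and using the pointwise bounds $|\cuScal\,\eta|=|\nabla\eta|\le\|\nabla\eta\|_{L^\infty(\omega^*)}$ and $|\boldsymbol{\tilde{u}}\times\nabla\eta|\le|\boldsymbol{\tilde{u}}|\,|\nabla\eta|$. (Since $A^{-1}$ is scalar and $\cuScal$ is $\nabla$ rotated by a right angle, $a_{\omega^*}(\cuScal f,\cuScal g)=a_{\omega^*}(\nabla f,\nabla g)$, so (i) is in fact the classical Caccioppoli identity for $-\di(A^{-1}\nabla\,\cdot)$ on $H_{A^{-1}}(\omega^*)=\tilde{H}_a(\omega^*)$ of \eqref{stream-functions}; I nevertheless give it directly in terms of $\cuScal$.)

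To begin, I would apply the product rules $\cuScal(\eta\phi)=\eta\,\cuScal\,\phi+\phi\,\cuScal\,\eta$ and $\cu(\eta\boldsymbol{\tilde{u}})=\eta\,\cu\boldsymbol{\tilde{u}}-\boldsymbol{\tilde{u}}\times\nabla\eta$, together with bilinearity of $a_{\omega^*}$, to expand, in the three-dimensional case,
\[
a_{\omega^*}\!\big(\cu(\eta\boldsymbol{\tilde{u}}),\,\cu(\eta\boldsymbol{\tilde{v}})\big)
=a_{\omega^*}\!\big(\boldsymbol{\tilde{u}}\times\nabla\eta,\,\boldsymbol{\tilde{v}}\times\nabla\eta\big)+T,
\]
where $T:=a_{\omega^*}(\eta\,\cu\boldsymbol{\tilde{u}},\eta\,\cu\boldsymbol{\tilde{v}})-a_{\omega^*}(\eta\,\cu\boldsymbol{\tilde{u}},\boldsymbol{\tilde{v}}\times\nabla\eta)-a_{\omega^*}(\boldsymbol{\tilde{u}}\times\nabla\eta,\eta\,\cu\boldsymbol{\tilde{v}})$; the two-dimensional expansion is the same with $\cu$ replaced by $\cuScal$ and $\boldsymbol{\tilde{u}}\times\nabla\eta$ by $-\phi\,\cuScal\,\eta$. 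Thus the asserted identity is equivalent to $T=0$.

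The key step is to insert the test functions $\eta^2\boldsymbol{\tilde{v}}$ and $\eta^2\boldsymbol{\tilde{u}}$ (resp.\ $\eta^2\psi$ and $\eta^2\phi$) into the defining relation of $\boldsymbol{\tilde{H}}_a(\omega^*)$ (resp.\ $\tilde{H}_a(\omega^*)$). I would first verify that $\eta^2\boldsymbol{\tilde{v}}\in\hcu{0}{}{\omega^*}$: since $\eta\in W^{1,\infty}(\omega^*)$ and $\boldsymbol{\tilde{v}}\in(H_N^1(\omega^*))^3$, the product lies in $(H^1(\omega^*))^3\subset\hcu{}{}{\omega^*}$, and it vanishes on all of $\partial\omega^*=(\partial\omega^*\cap\partial\Omega)\cup(\partial\omega^*\cap\Omega)$ — because $\boldsymbol{\tilde{v}}=0$ on the first piece while $\eta=0$ on the second — so in particular its tangential trace vanishes (and likewise $\eta^2\psi\in H_0^1(\omega^*)$ in the planar case); this is exactly where the hypothesis $\eta|_{\Omega\cap\partial\omega^*}=0$ is used. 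Now $\cu(\eta^2\boldsymbol{\tilde{v}})=\eta^2\cu\boldsymbol{\tilde{v}}-2\eta\,\boldsymbol{\tilde{v}}\times\nabla\eta$, so $a_{\omega^*}(\cu\boldsymbol{\tilde{u}},\cu(\eta^2\boldsymbol{\tilde{v}}))=0$ gives $a_{\omega^*}(\eta\,\cu\boldsymbol{\tilde{u}},\eta\,\cu\boldsymbol{\tilde{v}})=2\,a_{\omega^*}(\eta\,\cu\boldsymbol{\tilde{u}},\boldsymbol{\tilde{v}}\times\nabla\eta)$; by symmetry of $a_{\omega^*}$, testing the relation for $\boldsymbol{\tilde{v}}$ with $\eta^2\boldsymbol{\tilde{u}}$ gives $a_{\omega^*}(\eta\,\cu\boldsymbol{\tilde{u}},\eta\,\cu\boldsymbol{\tilde{v}})=2\,a_{\omega^*}(\boldsymbol{\tilde{u}}\times\nabla\eta,\eta\,\cu\boldsymbol{\tilde{v}})$. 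Substituting the first identity into $T$ leaves $T=a_{\omega^*}(\eta\,\cu\boldsymbol{\tilde{u}},\boldsymbol{\tilde{v}}\times\nabla\eta)-a_{\omega^*}(\boldsymbol{\tilde{u}}\times\nabla\eta,\eta\,\cu\boldsymbol{\tilde{v}})$, whereas substituting the second leaves exactly the negative of that; hence $2T=0$, i.e.\ $T=0$. The planar case is word for word the same with $\eta^2\psi$ and $\eta^2\phi$.

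I do not expect a single heavy estimate here; the delicate points are rather (a) the admissibility check for the test function on the two-part boundary $\partial\omega^*=(\partial\omega^*\cap\partial\Omega)\cup(\partial\omega^*\cap\Omega)$, which is the sole place the hypothesis $\eta=0$ on $\Omega\cap\partial\omega^*$ enters, and (b) the slightly non-obvious cancellation $T=-T$, which closes only because $a_{\omega^*}$ is symmetric and the defining relation is exploited in \emph{both} of its arguments. In three dimensions there is the additional subtlety that $\boldsymbol{\tilde{H}}_a(\omega^*)$ is a subspace of $(H^1)^3$ rather than merely $\hcu{}{}{\omega^*}$, which is what makes $\eta^2\boldsymbol{\tilde{v}}$ regular enough to serve as a test function and the curl product rule applicable; the fact that we may pass to vector potentials at all, $\boldsymbol{H}_a(\omega^*)=\cu\boldsymbol{\tilde{H}}_a(\omega^*)$, is supplied by \cref{exVecPotnew}.
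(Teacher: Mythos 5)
Your argument is correct and is essentially the paper's own proof: you expand $\cu(\eta\boldsymbol{\tilde{u}})$ by the product rule, use that $\eta^2\boldsymbol{\tilde{v}}$ (and $\eta^2\boldsymbol{\tilde{u}}$) lies in $(H^1_0(\omega^*))^3\subset\hcu{0}{}{\omega^*}$ so that the generalized harmonicity kills the term $a_{\omega^*}(\cu\boldsymbol{\tilde{u}},\cu(\eta^2\boldsymbol{\tilde{v}}))$, and then cancel the cross terms by a symmetrization; the paper does exactly this, merely phrasing the cancellation as "swap $\boldsymbol{\tilde{u}},\boldsymbol{\tilde{v}}$ and add" instead of your $T=-T$. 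The admissibility check of the test function on the two-part boundary and the derivation of the norm bound from the identity also match the paper's reasoning.
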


By means of Lemma~\ref{Caccio} and standard compactness results, we can prove the compactness of operator $P$.
\begin{proposition}
	The operator $P$ defined in (\ref{defP}) is compact. 
\end{proposition}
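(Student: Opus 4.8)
The idea is to reduce the compactness of $P$ to the Rellich--Kondrachov theorem by lifting divergence-free fields in $\boldsymbol{H}_a(\omega^*)$ to their potentials and then exploiting the Caccioppoli identity of Lemma~\ref{Caccio}. I describe the three-dimensional case; the two-dimensional one is identical after replacing vector potentials by stream functions, Lemma~\ref{Caccio}(ii) by Lemma~\ref{Caccio}(i), and $\boldsymbol{\tilde H}_a(\omega^*)$ by $\tilde H_a(\omega^*)=H_{A^{-1}}(\omega^*)$ via \eqref{stream-functions}. As a preliminary reduction, note that every $\boldsymbol{v}\in\boldsymbol{H}_a(\omega^*)$ is divergence free, so on $\boldsymbol{H}_a(\omega^*)$ the graph norm $\hdinorm{\cdot}{\omega^*;a}$ coincides with the weighted $L^2$-norm $\lnorm{\cdot}{\omega^*;a}$, which is equivalent to the usual $L^2(\omega^*)$-norm since $\alpha_0<A<\alpha_1$; the same holds on $\omega$. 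Hence it is enough to show that every sequence $(\boldsymbol{v}_k)\subset\boldsymbol{H}_a(\omega^*)$ that is bounded in $L^2(\omega^*)$ has a subsequence whose restrictions $\boldsymbol{v}_k|_{\omega}$ converge in $L^2(\omega)$.

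Fix a cut-off $\eta\in W^{1,\infty}(\omega^*)$ with $\eta\equiv 1$ on $\omega$ and $\eta\equiv 0$ on $\Omega\cap\partial\omega^*$; such $\eta$ exists because $\operatorname{dist}(\omega,\partial\omega^*\cap\Omega)>0$ (take an affine, truncated function of the distance to $\Omega\cap\partial\omega^*$). By the hypothesis that $\omega^*$ satisfies the assumptions of \cref{exVecPotnew}, together with the identity $\boldsymbol{H}_a(\omega^*)=\cu\,\boldsymbol{\tilde H}_a(\omega^*)$, for each $k$ we may pick a vector potential $\boldsymbol{\tilde u}_k\in\boldsymbol{\tilde H}_a(\omega^*)\subset(H^1_N(\omega^*))^3$ with $\cu\,\boldsymbol{\tilde u}_k=\boldsymbol{v}_k$ and $\|\boldsymbol{\tilde u}_k\|_{H^1(\omega^*)}\le C\,\lnorm{\boldsymbol{v}_k}{\omega^*}$. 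Thus $(\boldsymbol{\tilde u}_k)$ is bounded in $H^1(\omega^*)$, and since $\omega^*$ is a bounded Lipschitz domain, Rellich--Kondrachov yields a subsequence (not relabelled) with $\boldsymbol{\tilde u}_k\to\boldsymbol{\tilde u}$ strongly in $L^2(\omega^*)$.

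To finish, observe that $\boldsymbol{\tilde H}_a(\omega^*)$ is a linear space, so $\boldsymbol{\tilde u}_k-\boldsymbol{\tilde u}_l\in\boldsymbol{\tilde H}_a(\omega^*)$, and since $\eta\equiv1$ on $\omega$ we have $\cu\,(\eta(\boldsymbol{\tilde u}_k-\boldsymbol{\tilde u}_l))|_{\omega}=(\boldsymbol{v}_k-\boldsymbol{v}_l)|_{\omega}$. Applying Lemma~\ref{Caccio}(ii) to $\boldsymbol{\tilde u}_k-\boldsymbol{\tilde u}_l$ with the above $\eta$ gives
\[
\lnorm{\boldsymbol{v}_k-\boldsymbol{v}_l}{\omega;a}\;\le\;\lnorm{\cu\,(\eta(\boldsymbol{\tilde u}_k-\boldsymbol{\tilde u}_l))}{\omega^*;a}\;\le\;\norm{\nabla\eta}{L^{\infty}(\omega^*)}\,\lnorm{\boldsymbol{\tilde u}_k-\boldsymbol{\tilde u}_l}{\omega^*;a}.
\]
The right-hand side tends to $0$ as $k,l\to\infty$ because $(\boldsymbol{\tilde u}_k)$ is Cauchy in $L^2(\omega^*)$. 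Hence $(\boldsymbol{v}_k|_{\omega})$ is Cauchy, and therefore convergent, in $L^2(\omega;a)$, and a fortiori in $\hdinorm{\cdot}{\omega;a}$ (the divergence contribution vanishes). This proves that $P$ is compact.

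I expect the only genuine difficulty to be the lifting step: one must know that a divergence-free field with the appropriate (partial) boundary data admits a vector potential (resp. stream function) lying in the right space $\boldsymbol{\tilde H}_a(\omega^*)$ \emph{and} bounded in $H^1(\omega^*)$ by the $L^2$-norm of the field. This is precisely the content of \cref{exVecPotnew} and is the reason for the mild geometric assumption imposed on the oversampling domains. Once that bounded lifting is available, the remainder is a routine combination of a cut-off argument, the Caccioppoli identity of Lemma~\ref{Caccio}, and the Rellich--Kondrachov compact embedding.
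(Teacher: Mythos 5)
Your proof is correct and follows essentially the same route as the paper's: lift the divergence-free fields to vector potentials via \cref{exVecPotnew}, use their $H^1$-bound and Rellich--Kondrachov to extract an $L^2$-convergent subsequence, and then apply the Caccioppoli identity of \cref{Caccio} with a cut-off equal to one on $\omega$ to conclude that the restrictions form a Cauchy sequence. The only (harmless) difference is that the paper additionally remarks that the limit lies in $\boldsymbol{H}_a(\omega)$, which in your argument is implicit in the completeness of that space.
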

\begin{proof}
We only consider the three-dimensional case. The two-dimensional case can be proved similarly. Let $\left(\boldsymbol{u}_k\right)_k$ be a sequence in $\boldsymbol{H}_a(\omega^*)$ such that for all $k$,
	\begin{equation*}
		\hdinorm{\boldsymbol{u}_k}{\omega^*;a}
		\leq C.
	\end{equation*}
	We need to show that $\left(P\boldsymbol{u}_k\right)_k$ has a convergent subsequence in $\boldsymbol{H}_{a}(\omega)$. \Cref{exVecPotnew} implies that for every $\boldsymbol{u}_k$, there exists a 
	vector potential ${\boldsymbol{\tilde{u}}}_k \in (H^1_N(\omega^*))^3 $ such that 
	\begin{equation*}
		\cu{\boldsymbol{\tilde{u}}}_k = \boldsymbol{u}_k
	\end{equation*}
	and that there exists a constant $C_{pot}$ depending only on $\omega^*$ with
	\begin{equation*}
		\hnorm{{\boldsymbol{\tilde{u}}}_k}{\omega^*}
		\leq C_{pot}\lnorm{\boldsymbol{u}_k}{\omega^*}
		\leq C_{pot} \alpha_1^{1/2} \lnorm{\boldsymbol{u}_k}{\omega^*;a}
		\leq C_{pot} \alpha_1^{1/2}C.
	\end{equation*}
	By the Rellich-Kondrachov theorem, the sequence $({\boldsymbol{\tilde{u}}}_k)_k$ has a convergent subsequence in $(L^2(\omega^*))^3$. We still denote it by $({\boldsymbol{\tilde{u}}}_k)_k$. Now we apply \cref{Caccio} with ${\boldsymbol{\tilde{u}}}={\boldsymbol{\tilde{u}}}_k-{\boldsymbol{\tilde{u}}}_l$ (note that ${\boldsymbol{\tilde{u}}}_k\in\boldsymbol{\tilde{H}}_a(\omega^*)$, since $\boldsymbol{u}_k\in \boldsymbol{H}_a(\omega^*))$ to obtain
	\begin{equation*}
		\lnorm{\cu(\eta({\boldsymbol{\tilde{u}}}_k-{\boldsymbol{\tilde{u}}}_l))}{\omega^*;a}
		\leq \norm{\nabla\eta}{L^{\infty}(\omega^*)} \lnorm{{\boldsymbol{\tilde{u}}}_k-{\boldsymbol{\tilde{u}}}_l}{\omega^*;a}.
	\end{equation*}
	Since $\operatorname{dist}(\omega,\Omega\cap\partial\omega^*)>0$, we can choose $\eta$ such that $\eta=1$ in $\omega$ to conclude
	\begin{equation}
		\label{ineq_curl_by_l2}
		\lnorm{\cu({\boldsymbol{\tilde{u}}}_k-{\boldsymbol{\tilde{u}}}_l)}{\omega;a}
		\leq \norm{\nabla\eta}{L^{\infty}(\omega^*)} \lnorm{{\boldsymbol{\tilde{u}}}_k-{\boldsymbol{\tilde{u}}}_l}{\omega^*;a}.
	\end{equation}
Noting that $\lnorm{\cdot}{\omega^*;a}$ is a weighted $L^2(\omega^*)$-norm and $\left(\boldsymbol{\tilde{u}}_k\right)_k$ is Cauchy in $(L^2(\omega^*))^3$, we see that $(\cu {\boldsymbol{\tilde{u}}}_k)_{k}$ is a Cauchy sequence in $(L^2(\omega))^3$, thus converging to some $\boldsymbol{u}\in (L^2(\omega))^3$. Moreover, $\boldsymbol{u} \in \boldsymbol{H}_a(\omega)$ since $\cu {\boldsymbol{\tilde{u}}}_k=\boldsymbol{u}_k\in\boldsymbol{H}_a(\omega)$. Finally, since $\left(P\boldsymbol{u}_k\right)_k$ are divergence free, the $L^{2}$ convergence implies its convergence in $\boldsymbol{H}_a(\omega)$. 
\end{proof}

Next, we consider the best approximation of the image of operator $P$ by $n$-dimensional subspaces of $\boldsymbol{H}_a(\omega)$. This problem can be formulated by means of the Kolmogorov $n$-width \cite{pinkus2012n} as follows:
\begin{equation}\label{n-width}
d_{n}(\omega,\omega^{\ast}):=\inf_{Q(n)\subset \boldsymbol{H}_a(\omega)}\sup_{\boldsymbol{u}\in \boldsymbol{H}_a(\omega^{\ast})}\inf_{\boldsymbol{v}\in Q(n)} \frac{\Vert P\boldsymbol{u} - \boldsymbol{v}\Vert_{{\hdi{}{}{\omega;a}}}}{\Vert \boldsymbol{u}\Vert_{{\hdi{}{}{\omega^{\ast};a}}}},
\end{equation}
where the leftmost infimum runs over all $n$-dimensional subspaces of $\boldsymbol{H}_a(\omega)$. Since we are in a Hilbert space setting and operator $P$ is compact, we can characterize the $n$-width $d_{n}(\omega,\omega^{\ast})$ by means of the singular value decomposition of operator $P$.

\begin{lemma}
	\label{nwidth_char}
	For each $k\in\mathbb{N}$, let $\lambda_k$ and $\boldsymbol{v}_k$ be the $k$-th eigenvalue (arranged in increasing order) and the corresponding eigenfunction of the following problem
	\begin{equation}
		\label{EVP}
		a_{\omega^*}(\boldsymbol{v},\boldsymbol{\phi}) 
		= \lambda \,a_{\omega}(\boldsymbol{v}, \boldsymbol{\phi}) 
		\quad \forall \boldsymbol{\phi}\in \boldsymbol{H}_a(\omega^*).
	\end{equation}
	Then, $d_n(\omega,\omega^*) = \lambda_{n+1}^{-1/2}$, and the associated optimal approximation space is given by 
	\begin{equation}
		\mathrm{span}\{\boldsymbol{v}_1|_{\omega},\cdots,\boldsymbol{v}_n|_{\omega}\}. 
	\end{equation}
\end{lemma}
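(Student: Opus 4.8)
The plan is to recognise problem (\ref{EVP}) as the eigenvalue equation attached to the singular value decomposition of the compact operator $P$, and then to invoke the classical identification of the Kolmogorov $n$-width of a compact operator between Hilbert spaces with its singular values. The first step is a reduction to a clean Hilbert-space setting: since every vector field in $\boldsymbol{H}_a(\omega^*)$ and in $\boldsymbol{H}_a(\omega)$ is divergence free, the norms $\hdinorm{\cdot}{\omega^*;a}$ and $\hdinorm{\cdot}{\omega;a}$ restrict on these spaces to the weighted $L^2$-norms $\sqrt{a_{\omega^*}(\cdot,\cdot)}$ and $\sqrt{a_{\omega}(\cdot,\cdot)}$. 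Hence $(\boldsymbol{H}_a(\omega^*),a_{\omega^*})$ and $(\boldsymbol{H}_a(\omega),a_{\omega})$ are Hilbert spaces, and by homogeneity in $\boldsymbol{u}$ the quantity $d_n(\omega,\omega^*)$ of (\ref{n-width}) is exactly the Kolmogorov $n$-width of the image of the unit ball of $(\boldsymbol{H}_a(\omega^*),a_{\omega^*})$ under the operator $P$ of (\ref{defP}), which is compact by the preceding proposition.

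Next I would introduce the Hilbert adjoint $P^{*}\colon(\boldsymbol{H}_a(\omega),a_{\omega})\to(\boldsymbol{H}_a(\omega^*),a_{\omega^*})$, characterised by $a_{\omega^*}(P^{*}\boldsymbol{w},\boldsymbol{\phi})=a_{\omega}(\boldsymbol{w},P\boldsymbol{\phi})$ for all $\boldsymbol{\phi}\in\boldsymbol{H}_a(\omega^*)$, so that $P^{*}P$ is self-adjoint, compact and positive semidefinite on $\boldsymbol{H}_a(\omega^*)$. Since $a_{\omega}(\boldsymbol{v},\boldsymbol{\phi})=a_{\omega}(P\boldsymbol{v},P\boldsymbol{\phi})=a_{\omega^*}(P^{*}P\boldsymbol{v},\boldsymbol{\phi})$, problem (\ref{EVP}) is equivalent to $P^{*}P\boldsymbol{v}=\lambda^{-1}\boldsymbol{v}$. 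Consequently the numbers $\mu_k:=\lambda_k^{-1}$, $k\ge 1$, are the eigenvalues of $P^{*}P$ in decreasing order, equivalently $s_k:=\lambda_k^{-1/2}$ are the singular values of $P$ in decreasing order, with $\{\boldsymbol{v}_k\}$ an $a_{\omega^*}$-orthonormal system of right singular vectors and $\boldsymbol{w}_k:=s_k^{-1}P\boldsymbol{v}_k=s_k^{-1}\boldsymbol{v}_k|_{\omega}$ the corresponding $a_{\omega}$-orthonormal left singular vectors in $\boldsymbol{H}_a(\omega)$.

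Finally I would apply the standard $n$-width theory in Hilbert spaces (see \cite{pinkus2012n}): for a compact operator the $n$-width of the image of the unit ball equals the $(n+1)$-st singular value $s_{n+1}$, and an optimal $n$-dimensional subspace is the span of the first $n$ left singular vectors, here $\mathrm{span}\{\boldsymbol{w}_1,\dots,\boldsymbol{w}_n\}=\mathrm{span}\{\boldsymbol{v}_1|_{\omega},\dots,\boldsymbol{v}_n|_{\omega}\}$. If a self-contained argument is preferred, the lower bound $d_n(\omega,\omega^*)\ge s_{n+1}$ follows from a Courant--Fischer / dimension-count argument applied to the $(n+1)$-dimensional subspace $\mathrm{span}\{\boldsymbol{v}_1,\dots,\boldsymbol{v}_{n+1}\}$, and the matching upper bound from expanding $P\boldsymbol{u}$ in the orthonormal basis $\{\boldsymbol{w}_k\}$ and truncating after $n$ terms (Bessel's inequality). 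This gives $d_n(\omega,\omega^*)=s_{n+1}=\lambda_{n+1}^{-1/2}$, together with the asserted optimal space.

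The only point that needs genuine care — and the place I expect to spend real effort — is the bookkeeping when $P$ is not injective, so that (\ref{EVP}) has no finite eigenvalue on $\ker P$. One then has to argue that the range of $P$ is infinite dimensional, so that there is an increasing sequence $\lambda_1\le\lambda_2\le\cdots\to\infty$ and correspondingly infinitely many positive singular values $s_1\ge s_2\ge\cdots>0$ with $s_k\to 0$, and that replacing $\boldsymbol{H}_a(\omega^*)$ by $(\ker P)^{\perp}$ alters neither $d_n(\omega,\omega^*)$ nor the relevant eigenpairs. Everything else is the classical correspondence between the SVD of a compact operator and its Kolmogorov widths, and requires no analytical input beyond the compactness of $P$ already established via \cref{Caccio}.
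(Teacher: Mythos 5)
Your proposal is correct and takes essentially the same route as the paper: the paper's proof consists precisely of invoking \cite[Theorem 2.2]{pinkus2012n} together with the observation that $a_{\omega^*}(\boldsymbol{u},\boldsymbol{v})=(\boldsymbol{u},\boldsymbol{v})_{\hdi{}{}{\omega^*;a}}$ on $\boldsymbol{H}_a(\omega^*)$ (divergence-free fields), which is exactly the reduction you perform before identifying (\ref{EVP}) with the eigenproblem for $P^{*}P$ and reading off $d_n(\omega,\omega^*)=\lambda_{n+1}^{-1/2}$ and the optimal space. Your explicit SVD bookkeeping, including the treatment of $\ker P$, merely spells out what that cited theorem packages.
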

\begin{proof}
Use \cite[Theorem  2.2]{pinkus2012n} and the fact that $a_{\omega^*}(\boldsymbol{u},\boldsymbol{v})= (\boldsymbol{u},\boldsymbol{v})_{\hdi{}{}{\omega^*;a}}$ for $\boldsymbol{u},\boldsymbol{v}\in \boldsymbol{H}_a(\omega^*)$.		
\end{proof}
Now we are ready to define the desired local velocity approximation space.
\begin{proposition}
	Let the local approximation space for the velocity on $\omega$ be defined as 
	\begin{equation}
		\label{def_loc_vel}
		\boldsymbol{S}^{\mathrm{v}}_{n}(\omega) := \mathrm{span}\{\boldsymbol{v}_{1}|_{\omega}, \cdots, \boldsymbol{v}_{n}|_{\omega} \},
	\end{equation}
	where $\boldsymbol{v}_k$ denotes the $k$-th eigenfunction of the problem (\ref{EVP}). 
	Then, 
	\begin{equation}\label{local_estimate}
		\inf_{\boldsymbol{v}\in \boldsymbol{u}^{\mathrm{par}} + \boldsymbol{S}_{n}^{\mathrm{v}}(\omega)} \hdinorm{\boldsymbol{u}^{\mathrm{e}}-\boldsymbol{v}}{\omega;a} 
		\leq d_{n}(\omega,\omega^*) \hdinorm{\boldsymbol{u}^{\mathrm{e}}-\boldsymbol{{\psi}}}{\omega^*;a},
	\end{equation}
	where $\boldsymbol{\psi}$ and $\boldsymbol{u}^{\mathrm{par}}$ are defined in (\ref{partWeakNeu}) and (\ref{def_loc_part}), respectively.
\end{proposition}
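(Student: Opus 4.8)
The plan is to obtain \eqref{local_estimate} as a direct corollary of Lemma~\ref{nwidth_char}. First I would set $\boldsymbol{w}:=\boldsymbol{u}^{\mathrm{e}}|_{\omega^*}-\boldsymbol{\psi}$; recall from Section~\ref{subsec_locPart} — where this was shown by testing \eqref{partWeakNeu} and \eqref{dualWeakCont} with appropriate functions — that $\boldsymbol{w}\in\boldsymbol{H}_a(\omega^*)$. Since $\boldsymbol{u}^{\mathrm{par}}=\boldsymbol{\psi}|_{\omega}$ by \eqref{def_loc_part}, the restriction operator $P$ of \eqref{defP} satisfies $P\boldsymbol{w}=\boldsymbol{u}^{\mathrm{e}}|_{\omega}-\boldsymbol{u}^{\mathrm{par}}$, so that for every $\boldsymbol{\zeta}\in\boldsymbol{S}^{\mathrm{v}}_n(\omega)$, writing $\boldsymbol{v}=\boldsymbol{u}^{\mathrm{par}}+\boldsymbol{\zeta}$, we have $(\boldsymbol{u}^{\mathrm{e}}-\boldsymbol{v})|_{\omega}=P\boldsymbol{w}-\boldsymbol{\zeta}$ and hence
\[
\inf_{\boldsymbol{v}\in\boldsymbol{u}^{\mathrm{par}}+\boldsymbol{S}^{\mathrm{v}}_n(\omega)}\hdinorm{\boldsymbol{u}^{\mathrm{e}}-\boldsymbol{v}}{\omega;a}
=\inf_{\boldsymbol{\zeta}\in\boldsymbol{S}^{\mathrm{v}}_n(\omega)}\hdinorm{P\boldsymbol{w}-\boldsymbol{\zeta}}{\omega;a}.
\]

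Next I would bound the right-hand side via the singular value decomposition underlying Lemma~\ref{nwidth_char}. Normalizing the eigenfunctions $\boldsymbol{v}_k$ of \eqref{EVP} so that $a_{\omega^*}(\boldsymbol{v}_k,\boldsymbol{v}_l)=\delta_{kl}$, the compactness of $P$ (the preceding Proposition) together with the spectral theorem gives that $\{\boldsymbol{v}_k\}_k$ is an orthonormal basis of $(\boldsymbol{H}_a(\omega^*),(\cdot,\cdot)_{\hdi{}{}{\omega^*;a}})$, while \eqref{EVP} yields $a_{\omega}(\boldsymbol{v}_k,\boldsymbol{v}_l)=\lambda_k^{-1}\delta_{kl}$. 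Expanding $\boldsymbol{w}=\sum_k c_k\boldsymbol{v}_k$ with $c_k=a_{\omega^*}(\boldsymbol{w},\boldsymbol{v}_k)$, the $a_\omega$-best approximation of $P\boldsymbol{w}$ in $\boldsymbol{S}^{\mathrm{v}}_n(\omega)=\mathrm{span}\{\boldsymbol{v}_1|_\omega,\dots,\boldsymbol{v}_n|_\omega\}$ is $\sum_{k\le n}c_k\boldsymbol{v}_k|_\omega$; since $P\boldsymbol{w}$ and the $\boldsymbol{v}_k|_\omega$ are divergence-free, so that $\hdinorm{\cdot}{\omega;a}^2=a_\omega(\cdot,\cdot)$ on the relevant subspace, the orthogonality transfers to $\omega$ and
\[
\inf_{\boldsymbol{\zeta}\in\boldsymbol{S}^{\mathrm{v}}_n(\omega)}\hdinorm{P\boldsymbol{w}-\boldsymbol{\zeta}}{\omega;a}^2
=\sum_{k>n}c_k^2\lambda_k^{-1}
\le\lambda_{n+1}^{-1}\sum_{k>n}c_k^2
\le\lambda_{n+1}^{-1}\hdinorm{\boldsymbol{w}}{\omega^*;a}^2,
\]
where I used that $(\lambda_k)_k$ is increasing. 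Taking square roots, and recalling $d_n(\omega,\omega^*)=\lambda_{n+1}^{-1/2}$ from Lemma~\ref{nwidth_char} and $\boldsymbol{w}=\boldsymbol{u}^{\mathrm{e}}|_{\omega^*}-\boldsymbol{\psi}$, yields \eqref{local_estimate}.

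I do not expect a genuine obstacle here: once the hard preliminary facts are in place — namely $\boldsymbol{u}^{\mathrm{e}}|_{\omega^*}-\boldsymbol{\psi}\in\boldsymbol{H}_a(\omega^*)$, the compactness of $P$, and Lemma~\ref{nwidth_char} — the proposition is essentially bookkeeping. The only point needing a little care is the identification $\hdinorm{\cdot}{\omega;a}=\sqrt{a_\omega(\cdot,\cdot)}$ on the divergence-free fields that appear, which is what lets one invoke the optimality of the SVD truncation on $\omega$; this is the analogue on $\omega$ of the remark in the proof of Lemma~\ref{nwidth_char} that $a_{\omega^*}(\cdot,\cdot)=(\cdot,\cdot)_{\hdi{}{}{\omega^*;a}}$ on $\boldsymbol{H}_a(\omega^*)$. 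Alternatively, one can bypass the explicit expansion and simply quote from Lemma~\ref{nwidth_char} that $\mathrm{span}\{\boldsymbol{v}_1|_\omega,\dots,\boldsymbol{v}_n|_\omega\}$ is an optimal $n$-dimensional subspace realizing $d_n(\omega,\omega^*)$, which gives $\inf_{\boldsymbol{\zeta}\in\boldsymbol{S}^{\mathrm{v}}_n(\omega)}\hdinorm{P\boldsymbol{w}-\boldsymbol{\zeta}}{\omega;a}\le d_n(\omega,\omega^*)\hdinorm{\boldsymbol{w}}{\omega^*;a}$ immediately.
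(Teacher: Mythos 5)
Your proposal is correct and follows essentially the same route as the paper: the paper's proof simply observes that $\boldsymbol{u}^{\mathrm{e}}|_{\omega^*}-\boldsymbol{\psi}\in\boldsymbol{H}_a(\omega^*)$ and invokes the definition of the $n$-width together with \cref{nwidth_char}, which is precisely the shortcut you mention at the end. Your explicit spectral expansion is a correct (if unnecessary) unpacking of the optimality already contained in \cref{nwidth_char}.
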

\begin{proof}
Since $\boldsymbol{u}^{\mathrm{e}}|_{\omega^*}-\boldsymbol{{\psi}} \in \boldsymbol{H}_a(\omega^*)$, estimate (\ref{local_estimate}) follows immediately from the definition of the $n$-width and \cref{nwidth_char}.
\end{proof}

\begin{remark}\label{stream-functions-formulation}
In the two-dimensional case, by (\ref{stream-functions}), we see that $d_n(\omega,\omega^*)$ can be rewritten as     
\begin{equation}\label{equiv-nwidth}
d_{n}(\omega,\omega^{\ast})=\inf_{Q(n)\subset H_{A^{-1}}(\omega)/\mathbb{R}}\sup_{u\in H_{A^{-1}}(\omega^{\ast})/\mathbb{R}}\inf_{v\in Q(n)} \frac{\Vert \nabla u - \nabla v\Vert_{L^2(\omega;a)}}{\Vert\nabla u\Vert_{L^2(\omega^{\ast};a)}},  
\end{equation}
which was considered in \cite{babuska2011optimal}. Correspondingly, the local approximation space $\boldsymbol{S}^{\mathrm{v}}_{n}(\omega) = \big\{\cuScal\, v_{1}|_{\omega},\cdots, \cuScal\, v_{n}|_{\omega}\big\}$, where $v_{k}$ is the $k$-th eigenfunction of the problem
\begin{equation}\label{eig_stream_func}
a_{\omega^*}(\nabla v,\nabla \varphi) = \lambda \,a_{\omega}(\nabla v,\nabla \varphi) 
		\quad \forall \varphi \in H_{A^{-1}}(\omega^*)/\mathbb{R}.    
\end{equation}
This alternative formulation is very useful in practice: the eigenproblem (\ref{eig_stream_func}) is easier to solve than (\ref{EVP}) and an efficient solution technique has been developed in \cite{ma2022error}.
\end{remark}

Next we construct the local approximation space for the pressure from the local velocity approximation space $\boldsymbol{S}^{\mathrm{v}}_n(\omega)= \mathrm{span}\{\boldsymbol{v}_{1}|_{\omega}, \cdots, \boldsymbol{v}_{n}|_{\omega} \}$. For each $\boldsymbol{v}_k$, we define $(\tilde{\boldsymbol{v}}_{k},p_k)\in \hdi{}{}{\omega^0}\times L^2_0(\omega^0)$ to be the unique solution of
\begin{equation}
	\label{eq_pressure_recon}
	\left\{\begin{array}{lll}
		a_{\omega^0}(\tilde{\boldsymbol{v}}_{k},\boldsymbol{w}) + b_{\omega^0}(\boldsymbol{w},p_k)&= 0 &\forall \boldsymbol{w}\in \hdi{0}{}{\omega^0}, \\
		b_{\omega^0}(\tilde{\boldsymbol{v}}_{k},q) &= 0  &\forall q\in L^2(\omega^0), \\
		\tilde{\boldsymbol{v}}_{k}\cdot\boldsymbol{\nu} &= \boldsymbol{v}_k\cdot\boldsymbol{\nu} &\text{ on } \partial\omega^0.
	\end{array}\right.
\end{equation}
The local pressure approximation space is then defined by
\begin{equation}
	\label{def_loc_pres}
	S_n^{\mathrm{p}}(\omega^0):= \text{span}\{p_1,\cdots,p_n \}.
\end{equation}

We conclude this subsection by stating the exponential decay of the $n$-width $d_{n}(\omega,\omega^*)$, which will be proved in section \ref{sec:proofs_nearly_continuous}.  
\begin{theorem}
	\label{theorem_nearly_exponential}
	For every $\epsilon >0$, there exists $n_{\epsilon}$, such that for any $n>n_{\epsilon}$ 
	\begin{equation*}
		d_n(\omega,\omega^*) 
		\leq e^{-n^{\left(\frac{1}{d+1}-\epsilon\right)}}.
	\end{equation*} 
\end{theorem}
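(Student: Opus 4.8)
The plan is to reduce the Kolmogorov width in \eqref{n-width} to one for which the exponential-decay machinery of MS-GFEM applies, and then run a layer-by-layer argument whose engine is the Caccioppoli identity of \cref{Caccio}. In the case $d=2$ the reduction is immediate: by \eqref{stream-functions} and \cref{stream-functions-formulation}, $d_n(\omega,\omega^*)$ equals the $n$-width \eqref{equiv-nwidth} of the restriction operator $u\mapsto u|_\omega$ on the space $H_{A^{-1}}$ of weighted-harmonic scalar functions, which is precisely the quantity treated in \cite{babuska2011optimal}, and the asserted bound follows from the analysis there. So the substance is the case $d=3$. There, by \cref{exVecPotnew}, $\boldsymbol{H}_a(\omega^*)=\cu\,\boldsymbol{\tilde{H}}_a(\omega^*)$ with $\boldsymbol{\tilde{H}}_a(\omega^*)\subset(H^1_N(\omega^*))^3$, so $d_n(\omega,\omega^*)$ becomes the $n$-width of $\boldsymbol{\tilde u}\mapsto\cu\,\boldsymbol{\tilde u}|_\omega$ between vector-potential spaces; recall also that on these divergence-free spaces the $\boldsymbol{H}(\operatorname{div})$-norm coincides with the weighted $L^2$-norm.

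First I would interpose nested layers. Set $\delta:=\operatorname{dist}(\omega,\Omega\cap\partial\omega^*)>0$ and, for a parameter $m\in\mathbb{N}$ to be optimised, choose Lipschitz domains $\omega=D_0\subset D_1\subset\cdots\subset D_m=\omega^*$ with $\operatorname{dist}(D_{l-1},\Omega\cap\partial D_l)\gtrsim\delta/m$, together with cut-offs $\eta_l\in W^{1,\infty}(\omega^*)$ satisfying $\eta_l\equiv1$ on $D_{l-1}$, $\operatorname{dist}(\{\eta_l\neq0\},\Omega\cap\partial D_l)>0$, $\eta_l=0$ on $\Omega\cap\partial\omega^*$, and $\|\nabla\eta_l\|_{L^\infty}\lesssim m/\delta$; the layers should be selected with uniformly controlled geometry (e.g.\ as sublevel sets of $\operatorname{dist}(\cdot,\omega)$ intersected with $\omega^*$) so that the constants below are independent of $l$ and $m$. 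For each $l$, the restriction $P^{(l)}:\boldsymbol{H}_a(D_l)\to\boldsymbol{H}_a(D_{l-1})$ factors as $P^{(l)}=M^{(l)}\circ E^{(l)}\circ L^{(l)}$, where $L^{(l)}:\boldsymbol{u}\mapsto\boldsymbol{\tilde u}$ maps $(\boldsymbol{H}_a(D_l),\lnorm{\cdot}{D_l;a})$ boundedly into $(\boldsymbol{\tilde{H}}_a(D_l),\hnorm{\cdot}{D_l})$ by \cref{exVecPotnew} (cf.\ the proof that $P$ is compact); $E^{(l)}$ is the compact embedding $(\boldsymbol{\tilde{H}}_a(D_l),\hnorm{\cdot}{D_l})\hookrightarrow(\boldsymbol{\tilde{H}}_a(D_l),\lnorm{\cdot}{D_l})$; and $M^{(l)}:\boldsymbol{\tilde v}\mapsto\cu(\eta_l\boldsymbol{\tilde v})|_{D_{l-1}}$ maps $(\boldsymbol{\tilde{H}}_a(D_l),\lnorm{\cdot}{D_l;a})$ boundedly into $(\boldsymbol{H}_a(D_{l-1}),\lnorm{\cdot}{D_{l-1};a})$ with norm $\le\|\nabla\eta_l\|_{L^\infty}$ by \cref{Caccio}(ii) --- which applies precisely because the domain of $M^{(l)}$ is the harmonic space $\boldsymbol{\tilde{H}}_a(D_l)$, this being exactly what salvages the estimate in the $\boldsymbol{H}(\operatorname{div})$ setting where the bilinear form carries no derivatives. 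The identity $M^{(l)}\circ E^{(l)}\circ L^{(l)}=P^{(l)}$ holds because $\cu(\eta_l\boldsymbol{\tilde u})|_{D_{l-1}}=\cu\,\boldsymbol{\tilde u}|_{D_{l-1}}=\boldsymbol{u}|_{D_{l-1}}$ since $\eta_l\equiv1$ on $D_{l-1}$.

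Next I would use the elementary composition bound $d_{i+j}(T_1T_2)\le d_i(T_1)\,d_j(T_2)$ for compact operators between Hilbert spaces, together with the Weyl-type decay $d_k(E^{(l)})\lesssim k^{-1/d}$ of the embedding $(H^1(D_l))^3\hookrightarrow(L^2(D_l))^3$ (obtained with an $l$-uniform constant from a fixed quasi-uniform partition and piecewise-polynomial interpolation, or by extending to a fixed ball and invoking Neumann-eigenvalue asymptotics). The factorisation then gives $d_k(P^{(l)})\lesssim C_{pot}\,(m/\delta)\,k^{-1/d}$ for every $l$ and $k$. Since $P=P^{(1)}\circ\cdots\circ P^{(m)}$, taking $n=mk$ yields $d_n(\omega,\omega^*)\le\prod_{l=1}^m d_k(P^{(l)})\le\big(c_0\,(m/\delta)\,k^{-1/d}\big)^m$. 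Choosing $k=\lceil(2c_0\,m/\delta)^d\rceil$ makes each factor $\le\tfrac{1}{2}$, so $d_n(\omega,\omega^*)\le2^{-m}$ with $n=mk\asymp_\delta m^{d+1}$, i.e.\ $m\asymp_\delta n^{1/(d+1)}$, whence $d_n(\omega,\omega^*)\le e^{-c_\delta\,n^{1/(d+1)}}$ for all sufficiently large $n$. Since $c_\delta>0$ is fixed, given $\epsilon>0$ this implies $d_n(\omega,\omega^*)\le e^{-n^{1/(d+1)-\epsilon}}$ once $n>n_\epsilon$, with $n_\epsilon$ also taken large enough for the asymptotic regime above; this is the claim.

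I expect the main obstacle to be the $l$-uniform weak-approximation estimate $d_k(E^{(l)})\lesssim k^{-1/d}$: one must establish the $k^{-1/d}$ decay of the approximation numbers of $(H^1(D_l))^3\hookrightarrow(L^2(D_l))^3$, and the boundedness of the vector-potential operators $L^{(l)}$, with constants that do not degenerate as $m\to\infty$, which is where the geometry of the interposed layers must be pinned down and where the losses forcing the $\epsilon$ in the exponent enter. Everything else is bookkeeping once the two structural ingredients are in place: the curl-lifting $\boldsymbol{H}_a=\cu\,\boldsymbol{\tilde{H}}_a$ of \cref{exVecPotnew}, and the exact Caccioppoli identity \cref{Caccio}(ii), the latter being what makes the otherwise-unbounded factor $M^{(l)}$ (which formally differentiates its argument) bounded from $L^2$ to $L^2$ on the harmonic subspace.
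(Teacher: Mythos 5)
Your proposal is correct in substance, and it rests on exactly the two pillars the paper uses: the curl-lifting $\boldsymbol{H}_a=\cu\,\boldsymbol{\tilde{H}}_a$ of \cref{exVecPotnew} together with the Caccioppoli identity \cref{Caccio}(ii), and the $k^{-1/d}$ decay of the widths of $(H^1)^3\hookrightarrow (L^2)^3$; the $d=2$ reduction via stream functions to \cite{babuska2011optimal} is identical to the paper's. Where you differ is in how the layers are assembled. The paper proves a per-layer statement producing an explicit $m$-dimensional approximation space \emph{inside} the harmonic space (\cref{l2_approx_vecpot}, where $\tilde R_m\subset\boldsymbol{\tilde{H}}_a$ is built from eigenfunctions precisely so that the approximation error remains generalized harmonic, and \cref{lemma_induction_start}), and then iterates this estimate over nested domains, summing the local spaces as in \cite{babuska2011optimal,ma2022novel}; the global space of dimension $n\sim m^{d+1}k$-type bookkeeping is explicit. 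You instead factor each one-layer restriction $P^{(l)}=M^{(l)}\circ E^{(l)}\circ L^{(l)}$ and invoke the multiplicativity of Kolmogorov numbers, $d_{i+j}(T_1T_2)\le d_i(T_1)\,d_j(T_2)$, for the composition $P=P^{(1)}\circ\cdots\circ P^{(m)}$. This buys a cleaner argument in two respects: you never need the intermediate approximating subspaces to lie in $\boldsymbol{\tilde{H}}_a$ (harmonicity enters only through the bounded factor $M^{(l)}$, whose domain is the full harmonic potential space, so Caccioppoli applies), and the induction step "the error is again harmonic on the next subdomain" is replaced by the trivial fact that restriction preserves harmonicity. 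The price is that the near-optimal space is implicit (singular vectors of the composition) rather than an explicit sum of local eigenspaces, which is immaterial for the width bound claimed in \cref{theorem_nearly_exponential}. The point you flag as the main obstacle -- constants for the vector-potential operators $L^{(l)}$ and for $d_k(E^{(l)})\lesssim k^{-1/d}$ that are uniform over the interposed layers -- is real but is the same bookkeeping the paper defers to its references, where it is handled by choosing the nested domains of fixed shape (e.g.\ concentric, possibly truncated, boxes) so that the shape-dependent constants in \cref{exVecPotnew} and in the weak approximation estimate do not degenerate; with that choice your optimization $k\asymp m^{d}$, $n\asymp m^{d+1}$ reproduces the paper's rate, and the $\epsilon$-loss in the exponent absorbs the fixed constants exactly as you state.
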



	\subsection{Inf-sup stability}
	\label{globAppSpace}
	
 The local spaces in the preceding subsection are constructed based on approximation considerations. In general, the resulting global spaces (see (\ref{def_glob_spaces})) fail to satisfy the inf-sup stability. We deal with this issue in this subsection. The basic idea is to suitably enrich the multiscale space for the velocity such that the discrete divergence operator is surjective. To this end, we add for each local pressure basis function a vector field, whose divergence is exactly this pressure function, to the velocity space. These vector fields can be computed by solving suitable local problems.
	
Recall the local pressure space $S_n^{\mathrm{p}}(\omega^0)= \text{span}\{p_1,\cdots,p_n \}$ defined above. For each $p_{k}$, let $(\boldsymbol{u}_k,\tilde{p}_k)\in \hdi{0}{}{\omega^0}\times L^2_0(\omega^0)$ be the unique solution of
	\begin{equation}
		\label{enrProb}
		\left\{\begin{array}{lll}
			a_{\omega^0}({\boldsymbol{u}_k},\boldsymbol{w}) +b_{\omega^0}(\boldsymbol{w},\tilde{p}_k) &= 0 &\forall \boldsymbol{w}\in\hdi{0}{}{\omega^0}, \\
			b_{\omega^0}(\boldsymbol{u}_k,q) &= (p_k,q)_{L^2(\omega^0)}  &\forall q\in L^2(\omega^0),\\
			{\boldsymbol{u}_k}\cdot\boldsymbol{\nu} &= 0 &\text{ on } \partial\omega^0.
		\end{array}\right.
	\end{equation}
	This Neumann problem is well posed since $p_k\in L^2_0(\omega^0)$ and we can apply \cite[Theorem 4.2.3]{boffi2013mixed} to obtain the stability estimate
	\begin{equation}\label{stability_enrichement}
		\hdinorm{\boldsymbol{u}_k}{\omega^0;a}
		\leq \frac{2}{\beta(\omega^0)} \lnorm{p_k}{\omega^0}.
	\end{equation}
	Here $\beta(\omega^0)$ denotes the inf-sup constant for the pair $\hdi{0}{}{\omega^0}/L^2_0(\omega^0)$ equipped with the norms $\hdinorm{\cdot}{\omega^0;a}/\Vert\cdot\Vert_{L^{2}(\omega^{0})}$. We define the local velocity enrichment space by
	\begin{equation}
		\label{def_loc_enr}
		\boldsymbol{V}_n^{\operatorname{en}}(\omega^0) := \text{span}\{\boldsymbol{u}_1,\cdots,\boldsymbol{u}_n \}. 
	\end{equation}
\begin{remark}
Note that the local problems (\ref{eq_pressure_recon}) and (\ref{enrProb}) are similar in form (introducing a new variable $ \hat{\boldsymbol{v}}_{k} = \tilde{\boldsymbol{v}}_{k}-{\boldsymbol{v}}_{k}$ in (\ref{eq_pressure_recon})). In practice, after a discretization, they yield a set of linear systems with the same matrix that can be solved efficiently by direct methods.
\end{remark}

So far, we have constructed the local approximation spaces and the local velocity enrichment spaces. Now we can show the inf-sup stability for our multiscale spaces $\boldsymbol{V}^{\mathrm{MS}}$ and $Q^{\mathrm{MS}}$. Recall that 
\begin{equation*}
 \boldsymbol{V}^{\mathrm{MS}} 
	= \boldsymbol{S}^{\mathrm{v}}_{n}(\Omega)	
	+ \boldsymbol{V}_{n}^{\operatorname{en}}(\Omega)
	+ \boldsymbol{V}^{\operatorname{RT}}_0
	\quad\text{and}\quad 
	Q^{\mathrm{MS}} 
	= S^{\mathrm{p}}_{n}(\Omega)	
	+ Q^{\operatorname{RT}}_0,       
\end{equation*}
where $\boldsymbol{S}^{\mathrm{v}}_{n}(\Omega)$ ($S^{\mathrm{p}}_{n}(\Omega)$) is the global velocity (pressure) approximation space, $\boldsymbol{V}_{n}^{\operatorname{en}}(\Omega)$ is the global velocity enrichment space, and $V^{\operatorname{RT}}_0/Q_0^{\mathrm{RT}}$ is the Raviart-Thomas pair with respect to the partition $\{\omega_i^0\}_{i=1}^M$ (see \cref{raviart-thomas-pair}).
	\begin{theorem}
		\label{IS_MS}
		The pair $\boldsymbol{V}^{\mathrm{MS}}/Q^{\mathrm{MS}}$ is inf-sup stable, i.e.
		\begin{equation*}
			\sup_{\boldsymbol{u}^{\mathrm{MS}}\in \boldsymbol{V}^{\mathrm{MS}}}\frac{b(\boldsymbol{u}^{\mathrm{MS}},p^{\mathrm{MS}})}{\hdinorm{\boldsymbol{u}^{\mathrm{MS}}}{\Omega;a}\lnorm{p^{\mathrm{MS}}}{\Omega}}
			\geq \beta^{\mathrm{MS}}, \quad \forall p^{\mathrm{MS}}\in Q^{\mathrm{MS}}.
		\end{equation*}
		Here, $\beta^{\mathrm{MS}}$ is given by
		\begin{equation*}
	\ma{\beta^{\mathrm{MS}} := \Big(\frac{2}{\beta_{\mathrm{min}}}+\frac{1}{\beta^{\operatorname{RT}}}\Big)^{-1}},
		\end{equation*}
		where ${\beta_{\mathrm{min}}} := \min\limits_{i=1,\cdots,M} \beta(\omega_i^0)$, and $\beta^{\operatorname{RT}}$ denotes the inf-sup constant for the pair $V^{\operatorname{RT}}_0/Q_0^{\mathrm{RT}}$ equipped with the norms $\hdinorm{\cdot}{\Omega;a}/\Vert\cdot\Vert_{L^{2}(\Omega)}$.
	\end{theorem}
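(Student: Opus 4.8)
The plan is to prove the inf-sup inequality constructively: for a given $p^{\mathrm{MS}}\in Q^{\mathrm{MS}}$ (the case $p^{\mathrm{MS}}=0$ being trivial) I will exhibit a single admissible velocity $\boldsymbol{u}^{\mathrm{MS}}\in\boldsymbol{V}^{\mathrm{MS}}$ with $\di\boldsymbol{u}^{\mathrm{MS}}=p^{\mathrm{MS}}$ and $\hdinorm{\boldsymbol{u}^{\mathrm{MS}}}{\Omega;a}\le\big(\tfrac{2}{\beta_{\mathrm{min}}}+\tfrac{1}{\beta^{\operatorname{RT}}}\big)\lnorm{p^{\mathrm{MS}}}{\Omega}$; then $b(\boldsymbol{u}^{\mathrm{MS}},p^{\mathrm{MS}})=\lnorm{p^{\mathrm{MS}}}{\Omega}^2$, so the quotient in the supremum equals $\lnorm{p^{\mathrm{MS}}}{\Omega}/\hdinorm{\boldsymbol{u}^{\mathrm{MS}}}{\Omega;a}\ge\beta^{\mathrm{MS}}$. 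The first step is to split $p^{\mathrm{MS}}=p_{S}+p_{\mathrm{RT}}$ with $p_{S}\in S^{\mathrm{p}}_n(\Omega)$ and $p_{\mathrm{RT}}\in Q^{\operatorname{RT}}_0$ (any such splitting works) and to record the crucial fact that these two spaces are $L^2(\Omega)$-orthogonal: by (\ref{def_glob_spaces}) a function in $S^{\mathrm{p}}_n(\Omega)$ restricts on each cell $\omega_i^0$ to an element of $L^2_0(\omega_i^0)$, i.e. has vanishing mean over $\omega_i^0$, while every function in $Q^{\operatorname{RT}}_0$ is constant on $\omega_i^0$, so summing over the partition $\{\omega_i^0\}_{i=1}^M$ of $\Omega$ gives $(p_{S},p_{\mathrm{RT}})_{L^2(\Omega)}=0$. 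Hence $\lnorm{p_{S}}{\Omega}^2+\lnorm{p_{\mathrm{RT}}}{\Omega}^2=\lnorm{p^{\mathrm{MS}}}{\Omega}^2$, in particular $\lnorm{p_{S}}{\Omega}\le\lnorm{p^{\mathrm{MS}}}{\Omega}$ and $\lnorm{p_{\mathrm{RT}}}{\Omega}\le\lnorm{p^{\mathrm{MS}}}{\Omega}$.

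Next I would use the enrichment space to realise $p_{S}$ as a divergence. Writing $p_{S}=\sum_{i=1}^M p^i$ with $p^i\in S^{\mathrm{p}}_{n_i}(\omega_i^0)\subset L^2_0(\omega_i^0)$, I solve, for each $i$, the local problem (\ref{enrProb}) on $\omega_i^0$ with right-hand side $p^i$ in place of $p_k$; it is well posed by \cite[Theorem 4.2.3]{boffi2013mixed}, its solution $\boldsymbol{u}^i$ obeys $\di\boldsymbol{u}^i=p^i$ and $\boldsymbol{u}^i\cdot\boldsymbol{\nu}=0$ on $\partial\omega_i^0$, and — since the solution map is linear and sends the pressure basis $p_1,\dots,p_{n_i}$ to $\boldsymbol{u}_1,\dots,\boldsymbol{u}_{n_i}$ — it belongs to $\boldsymbol{V}_{n_i}^{\operatorname{en}}(\omega_i^0)$. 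The stability estimate underlying (\ref{stability_enrichement}) is valid for any data in $L^2_0(\omega_i^0)$ and yields $\hdinorm{\boldsymbol{u}^i}{\omega_i^0;a}\le\tfrac{2}{\beta(\omega_i^0)}\lnorm{p^i}{\omega_i^0}\le\tfrac{2}{\beta_{\mathrm{min}}}\lnorm{p^i}{\omega_i^0}$. Extending each $\boldsymbol{u}^i$ by zero (admissible because $\boldsymbol{u}^i\cdot\boldsymbol{\nu}=0$ on $\partial\omega_i^0$, so the extension lies in $\hdi{0}{}{\Omega}$ and its divergence is the zero-extension of $p^i$) and setting $\boldsymbol{u}_{S}:=\sum_{i=1}^M\boldsymbol{u}^i\in\boldsymbol{V}_n^{\operatorname{en}}(\Omega)$, the pairwise disjointness of the $\omega_i^0$ gives $\di\boldsymbol{u}_{S}=p_{S}$ and $\hdinorm{\boldsymbol{u}_{S}}{\Omega;a}^2=\sum_i\hdinorm{\boldsymbol{u}^i}{\omega_i^0;a}^2\le\tfrac{4}{\beta_{\mathrm{min}}^2}\sum_i\lnorm{p^i}{\omega_i^0}^2=\tfrac{4}{\beta_{\mathrm{min}}^2}\lnorm{p_{S}}{\Omega}^2$, i.e. $\hdinorm{\boldsymbol{u}_{S}}{\Omega;a}\le\tfrac{2}{\beta_{\mathrm{min}}}\lnorm{p_{S}}{\Omega}$.

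For the remaining piece, since $\di V^{\operatorname{RT}}_0=Q^{\operatorname{RT}}_0$ and the pair $V^{\operatorname{RT}}_0/Q^{\operatorname{RT}}_0$ is inf-sup stable with constant $\beta^{\operatorname{RT}}$ for the norms $\hdinorm{\cdot}{\Omega;a}/\lnorm{\cdot}{\Omega}$, the standard equivalence between an inf-sup condition and a bounded right inverse of the divergence (cf. \cite{boffi2013mixed}) provides $\boldsymbol{u}_{\mathrm{RT}}\in V^{\operatorname{RT}}_0$ with $\di\boldsymbol{u}_{\mathrm{RT}}=p_{\mathrm{RT}}$ and $\hdinorm{\boldsymbol{u}_{\mathrm{RT}}}{\Omega;a}\le\tfrac{1}{\beta^{\operatorname{RT}}}\lnorm{p_{\mathrm{RT}}}{\Omega}$. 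Then $\boldsymbol{u}^{\mathrm{MS}}:=\boldsymbol{u}_{S}+\boldsymbol{u}_{\mathrm{RT}}\in\boldsymbol{V}_n^{\operatorname{en}}(\Omega)+V^{\operatorname{RT}}_0\subset\boldsymbol{V}^{\mathrm{MS}}$ satisfies $\di\boldsymbol{u}^{\mathrm{MS}}=p_{S}+p_{\mathrm{RT}}=p^{\mathrm{MS}}$, hence $b(\boldsymbol{u}^{\mathrm{MS}},p^{\mathrm{MS}})=\lnorm{p^{\mathrm{MS}}}{\Omega}^2$, and by the triangle inequality together with the bounds of the previous two steps $\hdinorm{\boldsymbol{u}^{\mathrm{MS}}}{\Omega;a}\le\tfrac{2}{\beta_{\mathrm{min}}}\lnorm{p_{S}}{\Omega}+\tfrac{1}{\beta^{\operatorname{RT}}}\lnorm{p_{\mathrm{RT}}}{\Omega}\le\big(\tfrac{2}{\beta_{\mathrm{min}}}+\tfrac{1}{\beta^{\operatorname{RT}}}\big)\lnorm{p^{\mathrm{MS}}}{\Omega}$. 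Dividing gives the claimed inequality with $\beta^{\mathrm{MS}}=\big(\tfrac{2}{\beta_{\mathrm{min}}}+\tfrac{1}{\beta^{\operatorname{RT}}}\big)^{-1}$.

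The only genuinely non-routine point is the $L^2(\Omega)$-orthogonality in the first step; it is exactly why the local pressure spaces were normalised to have zero mean and $Q^{\operatorname{RT}}_0$ was taken piecewise constant on the same partition, and without it one could not bound $\lnorm{p_{S}}{\Omega}$ and $\lnorm{p_{\mathrm{RT}}}{\Omega}$ simultaneously by $\lnorm{p^{\mathrm{MS}}}{\Omega}$, so the whole construction would collapse. The minor technical points to verify carefully are that the solution operator of (\ref{enrProb}) restricted to $S^{\mathrm{p}}_{n_i}(\omega_i^0)$ genuinely maps into $\boldsymbol{V}_{n_i}^{\operatorname{en}}(\omega_i^0)$ (linearity plus uniqueness), that the enrichment stability estimate holds for arbitrary data in $L^2_0(\omega_i^0)$ and not merely for the basis functions $p_k$, and that zero-extension commutes with $\di$ here thanks to the homogeneous normal trace.
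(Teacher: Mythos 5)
Your proposal is correct and follows essentially the same route as the paper: you inline the paper's Lemmas \ref{IS_approx_loc} and \ref{IS_approx} (local enrichment problem plus linearity and disjoint supports), use the bounded right inverse of the divergence on $\boldsymbol{V}_0^{\operatorname{RT}}/Q_0^{\operatorname{RT}}$, and exploit the same $L^2(\Omega)$-orthogonality of $S_n^{\mathrm{p}}(\Omega)$ and $Q_0^{\operatorname{RT}}$ together with the triangle inequality to arrive at the same constant $\beta^{\mathrm{MS}}$. The only cosmetic difference is that you observe $\di\boldsymbol{u}^{\mathrm{MS}}=p^{\mathrm{MS}}$ directly, whereas the paper evaluates $b(\boldsymbol{u}^{\mathrm{MS}},p^{\mathrm{MS}})$ through the bilinear form and uses orthogonality to kill the cross terms; both are equivalent here.
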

 
 \begin{remark}
We can derive a lower bound for $\beta^{\mathrm{MS}}$ with an explicit dependence on the coefficient $A$. Let $\widetilde{\alpha}_0:={\rm min}\{\alpha^{1/2}_{0},1\}$, and define $\widetilde{\beta}({\omega_i^{0}})$ and $\widetilde{\beta}^{\operatorname{RT}}$ to be the inf-sup constants as above but with respect to the standard norms $\hdinorm{\cdot}{\omega_{i}^0}/\Vert\cdot\Vert_{L^{2}(\omega_{i}^{0})}$ and $\hdinorm{\cdot}{\Omega}/\Vert\cdot\Vert_{L^{2}(\Omega)}$, respectively. It is easy to see that
\begin{equation*}
{\beta}^{\operatorname{RT}}\geq \widetilde{\alpha}_0 \widetilde{\beta}^{\operatorname{RT}},\quad {\beta}({\omega_i^{0}}) \geq \widetilde{\alpha}_0 \widetilde{\beta}({\omega_i^{0}}), \quad i=1,\cdots,M.
\end{equation*}
Therefore, it holds
\begin{equation*}
 \beta^{\mathrm{MS}}\geq \widetilde{\alpha}_0 \Big(\frac{2}{\widetilde{\beta}_{\mathrm{min}}}+\frac{1}{\widetilde{\beta}^{\operatorname{RT}}}\Big)^{-1}, \quad \text{with}\quad {\widetilde{\beta}_{\mathrm{min}}} := \min\limits_{i=1,\cdots,M} \widetilde{\beta}(\omega_i^0).
\end{equation*}
 \end{remark}
 
      Before proceeding to the proof, let us outline the two main observations for showing this result. First, by construction, we have inf-sup stability for the local spaces $\boldsymbol{V}_{n_i}^{\mathrm{en}}(\omega_i^0) / S_{n_i}^{\mathrm{p}}(\omega_i^0)$, which can be extended to the global spaces $\boldsymbol{V}_n^{\mathrm{en}}(\Omega)/S_n^{\mathrm{p}}(\Omega)$, since the $\omega_i^0$'s are pairwise disjoint. Secondly, the $L^2$-orthogonality of $S_n^{\mathrm{p}}(\Omega)$ and $Q_0^{\mathrm{RT}}$ allows us to extend inf-sup stability to the whole multiscale spaces. To prove \cref{IS_MS}, we need the following two lemmas.
        \begin{lemma}
		\label{IS_approx_loc}
		For each $p\in S^{\mathrm{p}}_n(\omega^0)$, there exists $\boldsymbol{u}\in \boldsymbol{V}_n^{\operatorname{en}}(\omega^0)$ such that
		\begin{equation*}
			\hdinorm{\boldsymbol{u}}{\omega^0;a} 
			\leq \frac{2 }{\beta(\omega^0)} \lnorm{p}{\omega^0}
			\quad\text{and}\quad
			b(\boldsymbol{u},q) = (p,q)_{L^2(\omega^0)} \;\medspace\forall q\in L^2(\omega^0).
		\end{equation*}
	\end{lemma}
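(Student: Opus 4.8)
The plan is to exploit the linearity of the solution operator of the local Neumann problem (\ref{enrProb}). Consider a generic datum $p\in L^2_0(\omega^0)$ in place of $p_k$ on the right-hand side of (\ref{enrProb}); the resulting problem is again a well-posed linear saddle point problem, since $a_{\omega^0}$ is coercive on the kernel of $b_{\omega^0}(\cdot,\cdot)$ with respect to $\hdinorm{\cdot}{\omega^0;a}$ (on divergence-free fields the $\hdi{}{}{}$-norm reduces to the weighted $L^2$-norm $\lnorm{A^{-1/2}\cdot}{\omega^0}$, so the coercivity constant in the relevant norm is $1$), and $\hdi{0}{}{\omega^0}/L^2_0(\omega^0)$ satisfies the inf-sup condition with constant $\beta(\omega^0)$. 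Thus \cite[Theorem 4.2.3]{boffi2013mixed} yields a linear solution map $T\colon L^2_0(\omega^0)\to\hdi{0}{}{\omega^0}$, $p\mapsto\boldsymbol{u}_p$, with $\boldsymbol{u}_p$ the first component of the solution, satisfying the stability estimate $\hdinorm{Tp}{\omega^0;a}\leq \tfrac{2}{\beta(\omega^0)}\lnorm{p}{\omega^0}$; this is exactly (\ref{stability_enrichement}) read with generic datum.

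Next I would observe that, by construction, $Tp_k=\boldsymbol{u}_k$ for $k=1,\dots,n$, so by linearity $T(S^{\mathrm{p}}_n(\omega^0))=T(\mathrm{span}\{p_1,\dots,p_n\})\subset\mathrm{span}\{\boldsymbol{u}_1,\dots,\boldsymbol{u}_n\}=\boldsymbol{V}_n^{\operatorname{en}}(\omega^0)$. Hence, given $p\in S^{\mathrm{p}}_n(\omega^0)$, it suffices to take $\boldsymbol{u}:=Tp\in\boldsymbol{V}_n^{\operatorname{en}}(\omega^0)$; the norm bound is then immediate from the stability estimate above. For the second property, the second equation of (\ref{enrProb}) (with datum $p$) reads $b_{\omega^0}(\boldsymbol{u},q)=(p,q)_{L^2(\omega^0)}$ for all $q\in L^2(\omega^0)$; since $\boldsymbol{u}$ is extended by zero outside $\omega^0$ and $p$ is supported in $\omega^0$, restricting any $q\in L^2(\Omega)$ to $\omega^0$ shows that this is the same as $b(\boldsymbol{u},q)=(p,q)_{L^2(\omega^0)}$, which is the claim.

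I do not expect a genuine obstacle here; the one thing to get right is to \emph{not} prove the norm bound by expanding $p=\sum_k c_k p_k$ and summing the individual estimates (\ref{stability_enrichement}), which would introduce spurious constants depending on $n$ and on the conditioning of $\{p_k\}_k$. The clean argument is that $\boldsymbol{u}=\sum_k c_k\boldsymbol{u}_k$ is itself the solution of (\ref{enrProb}) with datum $p$, so the single stability estimate from \cite[Theorem 4.2.3]{boffi2013mixed} applies directly, and the non-uniqueness of the representation $p=\sum_k c_k p_k$ is irrelevant because $T$ is a well-defined linear map. A minor bookkeeping point worth spelling out explicitly is the coercivity constant on the kernel, as above, since this is what produces the factor $2/\beta(\omega^0)$ rather than one involving $\alpha_0,\alpha_1$.
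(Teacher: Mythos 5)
Your proposal is correct and follows essentially the same route as the paper: the paper also writes $p=\sum_k\mu_k p_k$, observes by linearity that $\boldsymbol{u}=\sum_k\mu_k\boldsymbol{u}_k$ (together with $\tilde p=\sum_k\mu_k\tilde p_k$) solves the Neumann problem (\ref{enrProb}) with datum $p$, and then applies the stability estimate of \cite[Theorem 4.2.3]{boffi2013mixed} once to that combined solution, exactly as you advocate. Your solution-operator phrasing and the remark on kernel coercivity in the weighted norm are just a cleaner bookkeeping of the same argument.
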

	\begin{proof}
		Every $p\in S^{\mathrm{p}}_n(\omega^0)$ has the form
		$
		p = \sum_{k=1}^n \mu_k p_k$, where $\mu_k\in\mathbb{R}. 
		$
		For $k=1,\cdots,n$, let $\boldsymbol{u}_k\in \boldsymbol{V}_n^{\operatorname{en}}(\omega^0)$ and $\tilde{p}_k\in L^2_0(\omega^0)$ be from (\ref{enrProb}). Define  
		$
		\boldsymbol{u} = \sum_{k=1}^n \mu_k \boldsymbol{u}_k \in \boldsymbol{V}_n^{\operatorname{en}}(\omega^0)  
		\text{ and }
		\tilde{p} = \sum_{k=1}^n \mu_k \tilde{p}_k.
		$
		The linearity of (\ref{enrProb}) implies that $(\boldsymbol{u},\tilde{p})$ satisfies 
		\begin{equation*}
			\left\{\begin{array}{lll}
				a_{\omega^0}(\boldsymbol{u},\boldsymbol{w}) +b_{\omega^0}(\boldsymbol{w},\tilde{p}) &= 0 &\forall \boldsymbol{w}\in\hdi{0}{}{\omega^0}, \\
				b_{\omega^0}(\boldsymbol{u}_,q) &= (p,q)_{L^2(\omega^0)}  &\forall q\in L^2(\omega^0),\\
				\boldsymbol{u}\cdot\boldsymbol{\nu} &= 0 &\text{ on } \partial\omega^0.
			\end{array}\right.
		\end{equation*}
		Using a standard stability estimate (see \cite[Theorem 4.2.3]{boffi2013mixed}) gives the desired result.
	\end{proof}
	
	\begin{lemma}
		\label{IS_approx}
		For each $p\in S^{\mathrm{p}}_n(\Omega)$, there exists $\boldsymbol{u}\in \boldsymbol{V}_n^{\operatorname{en}}(\Omega)$ such that 
		\begin{equation*}
			\hdinorm{\boldsymbol{u}}{\Omega;a} 
			\leq \frac{2}{\beta_{\mathrm{min}}} \lnorm{p}{\Omega}
			\quad\text{and}\quad
			b(\boldsymbol{u},q) = (p,q)_{L^2(\Omega)}\; \medspace\forall q\in L^2(\Omega), 
		\end{equation*}
		where $\beta_{\mathrm{min}} := \min\limits_{i=1,\cdots,M} \beta(\omega_i^0)$.
	\end{lemma}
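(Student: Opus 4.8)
The plan is to reduce the global claim to the local one in \cref{IS_approx_loc}, exploiting that the partition $\{\omega_i^0\}_{i=1}^M$ is pairwise disjoint, so that the construction decouples subdomain by subdomain.

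First, by the definition (\ref{def_glob_spaces}) of $S^{\mathrm{p}}_n(\Omega)$, I would write $p=\sum_{i=1}^M p^i$ with $p^i\in S^{\mathrm{p}}_{n_i}(\omega_i^0)$ extended by zero to $\Omega$; since the $\omega_i^0$ are disjoint, $p|_{\omega_i^0}=p^i$ and $\lnorm{p}{\Omega}^2=\sum_{i=1}^M\lnorm{p^i}{\omega_i^0}^2$. Applying \cref{IS_approx_loc} on each $\omega_i^0$ provides $\boldsymbol{u}^i\in\boldsymbol{V}_{n_i}^{\operatorname{en}}(\omega_i^0)$ satisfying $b_{\omega_i^0}(\boldsymbol{u}^i,q)=(p^i,q)_{L^2(\omega_i^0)}$ for all $q\in L^2(\omega_i^0)$ and $\hdinorm{\boldsymbol{u}^i}{\omega_i^0;a}\le \frac{2}{\beta(\omega_i^0)}\lnorm{p^i}{\omega_i^0}$. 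I then define $\boldsymbol{u}:=\sum_{i=1}^M\boldsymbol{u}^i$ with each $\boldsymbol{u}^i$ extended by zero, which belongs to $\boldsymbol{V}_n^{\operatorname{en}}(\Omega)$ by (\ref{def_glob_spaces}).

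It remains to check the two asserted properties. For the divergence identity, I would use that each $\boldsymbol{u}^i\in\boldsymbol{V}_{n_i}^{\operatorname{en}}(\omega_i^0)\subset\hdi{0}{}{\omega_i^0}$ has vanishing normal trace on $\partial\omega_i^0$, so that its zero-extension lies in $\hdi{0}{}{\Omega}$ with distributional divergence on $\Omega$ equal to the zero-extension of $\di\boldsymbol{u}^i$; hence, for any $q\in L^2(\Omega)$,
\begin{equation*}
 b(\boldsymbol{u},q)=\sum_{i=1}^M b_{\omega_i^0}(\boldsymbol{u}^i,q|_{\omega_i^0})=\sum_{i=1}^M (p^i,q)_{L^2(\omega_i^0)}=(p,q)_{L^2(\Omega)},
\end{equation*}
where the last equality uses the disjointness of the $\omega_i^0$ together with the zero-extensions of the $p^i$. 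For the norm bound, disjointness again yields $\hdinorm{\boldsymbol{u}}{\Omega;a}^2=\sum_{i=1}^M\hdinorm{\boldsymbol{u}^i}{\omega_i^0;a}^2\le \sum_{i=1}^M\frac{4}{\beta(\omega_i^0)^2}\lnorm{p^i}{\omega_i^0}^2\le \frac{4}{\beta_{\mathrm{min}}^2}\sum_{i=1}^M\lnorm{p^i}{\omega_i^0}^2=\frac{4}{\beta_{\mathrm{min}}^2}\lnorm{p}{\Omega}^2$, and taking square roots gives the claim.

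The argument is essentially routine; the only point deserving a little care is the bookkeeping of zero-extensions — namely that extending a function of $\hdi{0}{}{\omega_i^0}$ by zero produces a function of $\hdi{0}{}{\Omega}$ whose divergence is the zero-extension of the local divergence, so that the local forms $b_{\omega_i^0}$ assemble cleanly into the global form $b$. Everything else follows immediately from the pairwise disjointness of $\{\omega_i^0\}_{i=1}^M$, linearity, and \cref{IS_approx_loc}.
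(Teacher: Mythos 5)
Your proof is correct and follows essentially the same route as the paper: decompose $p=\sum_i p^i$, apply \cref{IS_approx_loc} on each $\omega_i^0$, sum the (zero-extended) local fields, and use the pairwise disjointness of the $\omega_i^0$ for both the divergence identity and the norm bound. Your extra remark on why zero-extension preserves membership in $\hdi{0}{}{\Omega}$ and the divergence is a harmless elaboration of what the paper leaves implicit.
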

	\begin{proof}
		Let $p\in S_n^{\mathrm{p}}(\Omega)$ be given by $p=\sum_{i=1}^M p_i$ with $p_i\in S_{n_i}^{\mathrm{p}}(\omega_i^0)$.	Applying \cref{IS_approx_loc} yields that for each $p_i$, there exists a $\boldsymbol{u}_i\in \boldsymbol{V}^{\operatorname{en}}_{n_i}(\omega_i^0)$ such that 
		\begin{equation*}
			\hdinorm{\boldsymbol{u}_i}{\omega_i^0;a} 
			\leq \frac{2 }{\beta(\omega_i^0)} \lnorm{p_i}{\omega_i^0}
			\quad\text{and}\quad
			b_{\omega_i^0}(\boldsymbol{u}_i,q) = (p_i,q)_{L^2(\omega_i^0)}\; \medspace\forall q\in L^2(\omega_i^0).
		\end{equation*}
		Define $\boldsymbol{u}:= \sum_{i=1}^M \boldsymbol{u}_i\in \boldsymbol{V}^{\operatorname{en}}_n(\Omega)$ and it holds that $b(\boldsymbol{u},q) = (p,q)_{L^2(\Omega)}$ for all $q\in L^2(\Omega)$. For the inequality we use the fact that $\boldsymbol{u}_i$ and $p_i$ are supported in the pairwise disjoint sets $\omega_i^0$ to compute
		\begin{align}
			\label{crucial_est_ven}
			\hdinorm{\boldsymbol{u}}{\Omega;a}^2 
			= \sum_{i=1}^M \hdinorm{\boldsymbol{u}_i}{\omega_i^0;a}^2
			\leq \sum_{i=1}^M \frac{4}{\beta(\omega_i^0)^2}\lnorm{p_i}{\omega_i^0}^2
			\leq \frac{4}{\beta_{\mathrm{min}}^2}\lnorm{p}{\Omega}^2.
		\end{align}
	\end{proof}

 Now we can prove \cref{IS_MS}.
	
	\begin{proof}[Proof of \cref{IS_MS}]
		Let $p^{\mathrm{MS}}=p + c\in Q^{\mathrm{MS}}$ with  $p\in S^{\mathrm{p}}_n(\Omega)$ and $c\in Q^{\operatorname{RT}}_0$. Due to \cref{IS_approx}, there exists $\boldsymbol{u}\in \boldsymbol{V}^{\operatorname{en}}(\Omega)$ with
		\begin{equation}
			\label{IS_proof_ap}
		\hdinorm{\boldsymbol{u}}{\Omega;a}
			\leq \frac{2}{\beta_{\mathrm{min}}}\lnorm{p}{\Omega} \quad \text{and}\quad	b(\boldsymbol{u},q)=(p,q)_{L^2(\Omega)}\;\medspace \forall q\in L^2(\Omega).
		\end{equation}
		Moreover, the assumption $\di \boldsymbol{V}_0^{\mathrm{RT}}=Q_0^{\mathrm{RT}}$ implies the existence of $\boldsymbol{u}^{\operatorname{RT}}\in \boldsymbol{V}^{\operatorname{RT}}_0$ with
		\begin{equation}\label{IS_proof_RT}
		\hdinorm{\boldsymbol{u}^{\operatorname{RT}}}{\Omega;a}
			\leq \frac{1}{\beta^{\operatorname{RT}}}
			\lnorm{c}{\Omega}\quad\text{and}\quad	b(\boldsymbol{u}^{\operatorname{RT}},q)=(c,q)_{L^2(\Omega)}\;\medspace \forall q\in L^2(\Omega).
		\end{equation}
Note that here we have equipped $\boldsymbol{V}_0^{\mathrm{RT}}$ with the weighted norm $\hdinorm{\cdot}{\Omega;a}$. Furthermore, we notice that the spaces $S_n^{\mathrm{p}}(\Omega)$ and $Q_0^{\operatorname{RT}}$ are $L^2(\Omega)$-orthogonal, because on every partition domain $\omega^0$, the integral of functions in $S_n^{\mathrm{p}}(\Omega)$  vanishes and functions in $Q_0^{\operatorname{RT}}$ are constant. Now, choosing $q=p$ in (\ref{IS_proof_RT}), $q=c$ in (\ref{IS_proof_ap}) and using the $L^2(\Omega)$-orthogonality of $c$ and $p$, we see that  $\boldsymbol{u}^{\mathrm{MS}} = \boldsymbol{u} + \boldsymbol{u}^{\operatorname{RT}}$ satisfies
		\begin{align*}
			b(\boldsymbol{u}^{\mathrm{MS}},p^{\mathrm{MS}})
			&= b(\boldsymbol{u},p) + b(\boldsymbol{u}^{\operatorname{RT}},c) + (c,p)_{L^2(\Omega)} + (p,c)_{L^2(\Omega)}\\
			&= \lnorm{p}{\Omega}^2 + \lnorm{c}{\Omega}^2
			= \lnorm{p^{\mathrm{MS}}}{\Omega}^2.
		\end{align*}
		Using the above equality, the orthogonality of $p$ and $c$, the triangle inequality, and the inequalities in (\ref{IS_proof_ap}) and (\ref{IS_proof_RT}), we conclude
		\begin{align*}
			&\frac{b(\boldsymbol{u}^{\mathrm{MS}},p^{\mathrm{MS}})}{\hdinorm{\boldsymbol{u}^{\mathrm{MS}}}{\Omega;a}\lnorm{p^{\mathrm{MS}}}{\Omega}}
			=\frac{\lnorm{p^{\mathrm{MS}}}{\Omega}}{\hdinorm{\boldsymbol{u}^{\mathrm{MS}}}{\Omega;a}}
			=\frac{\sqrt{\lnorm{p}{\Omega}^2+\lnorm{c}{\Omega}^2}}{\hdinorm{\boldsymbol{u}^{\mathrm{MS}}}{\Omega;a}}\\
			&\geq \frac{\sqrt{\lnorm{p}{\Omega}^2+\lnorm{c}{\Omega}^2}}{\hdinorm{\boldsymbol{u}}{\Omega;a} + \hdinorm{\boldsymbol{u}^{\operatorname{RT}}}{\Omega;a}}
			\geq \frac{\sqrt{\lnorm{p}{\Omega}^2+\lnorm{c}{\Omega}^2}}{2/\beta_{\mathrm{min}} \lnorm{p}{\Omega} + 1/\beta^{\operatorname{RT}} \lnorm{c}{\Omega}}\\
			&\geq 
			\frac{1}{2/\beta_{\mathrm{min}} + 1/\beta^{\operatorname{RT}} }
			= \beta^{\mathrm{MS}}.
		\end{align*}
	\end{proof}

\subsection{Global approximation error estimates}
In this subsection, we will show that the global approximation errors for the velocity and pressure are bounded by the local velocity approximation errors, thereby decaying exponentially with respect to the local degrees of freedom. To give the result, we need the following stability estimates for problems (\ref{dualWeakCont}) and (\ref{partWeakNeu}): 
	\begin{equation}\label{eq:stability_inequality}
 \begin{array}{ll} 
 &\lnorm{\boldsymbol{{u}}^{\rm e}}{\Omega;a}
		\leq C_{\operatorname{stab}} \left( \lnorm{f}{\Omega} + \lnorm{g_N}{\partial\Omega}\right),\\[2mm]
		&\lnorm{\boldsymbol{{\psi}}_i}{\omega_i^*;a}
		\leq C^i_{\operatorname{stab}} \left( \lnorm{f}{\omega_i^*} + \lnorm{g_N}{\partial\Omega\cap\partial\omega_i^*}\right)\quad 1\leq i\leq M.		
  \end{array}
	\end{equation}
	\begin{theorem}
		\label{GlobLocErr}
		Assume that there exist $\boldsymbol{v}^{i}\in \boldsymbol{S}^{\mathrm{v}}_{n_i}(\omega_i)$ for $i=1,\cdots,M$ such that
		\begin{equation}\label{local-div-estimate}							    
  \hdinorm{\boldsymbol{u}^{\mathrm{e}}-\boldsymbol{u}_i^{\mathrm{par}}-\boldsymbol{v}^{i}}{\omega_i;a} 
			\leq \epsilon_i \hdinorm{\boldsymbol{u}^{\mathrm{e}} - \boldsymbol{{\psi}}_i}{\omega_i^*;a}.
		\end{equation}
	Let $\boldsymbol{v}=\sum_{i=1}^{M}\chi_{i}\boldsymbol{v}^{i} \in \boldsymbol{S}^{\mathrm{v}}_{n}(\Omega)$ and $\displaystyle \epsilon_{\rm max}=\max_{1\leq i\leq M}\epsilon_i$. Then,
		\begin{align*}							    	   	
  \lnorm{\boldsymbol{u}^{\mathrm{e}}-\boldsymbol{u}^{\mathrm{par}}-\boldsymbol{v}}{\Omega;a} 
			&\leq 2 \sqrt{\kappa\kappa^*}\tilde{C}\epsilon_{\rm max} ,\\[1mm]
			\lnorm{\di\left(\boldsymbol{u}^{\mathrm{e}}-\boldsymbol{u}^{\mathrm{par}}-\boldsymbol{v}\right)}{\Omega} 
			&\leq 2\sqrt{\kappa\kappa^*}\tilde{C} \alpha_1^{1/2} \big(\max\limits_{1\leq i\leq M}\norm{\nabla\chi_i}{L^\infty(\omega_i)}\big)\epsilon_{\rm max} ,\\[1mm]
			\inf_{p\in Q^{\mathrm{MS}}} \lnorm{p^{\mathrm{e}}-p^{\mathrm{par}}-p}{\Omega}
			&\leq  2\sqrt{\kappa^*} \tilde{C} \beta^{-1}_{\mathrm{min}} \epsilon_{\rm max} ,
		\end{align*}
		where
		$
		\tilde{C} =\left( C_{\operatorname{stab}} + C_{\operatorname{max}} \right) \left( \lnorm{f}{\Omega} + \lnorm{g_N}{\partial\Omega} \right)$ with  $C_{\operatorname{max}} = \max_{1\leq i\leq M} C_{\operatorname{stab}}^i$, $\kappa$ and $\kappa^*$ are defined in (\ref{pointwise_overlap}), and $\beta_{\mathrm{min}}$ is as in \cref{IS_MS}.
	\end{theorem}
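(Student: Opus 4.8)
The plan is to reduce all three bounds to the local errors $\boldsymbol{e}_i:=(\boldsymbol{u}^{\mathrm{e}}-\boldsymbol{u}_i^{\mathrm{par}}-\boldsymbol{v}^i)|_{\omega_i}$, controlling the velocity and divergence through the partition of unity and the pressure through the local inf-sup condition. As a preliminary step I would note that $\di\boldsymbol{u}^{\mathrm{e}}=\di\boldsymbol{\psi}_i=-f$ on $\omega_i$ (recall $\boldsymbol{u}_i^{\mathrm{par}}=\boldsymbol{\psi}_i|_{\omega_i}$) and that $\boldsymbol{v}^i$ is a combination of restrictions of the divergence-free eigenfunctions $\boldsymbol{v}_k^i$, so that $\di\boldsymbol{e}_i=0$ and hence $\hdinorm{\boldsymbol{e}_i}{\omega_i;a}=\lnorm{\boldsymbol{e}_i}{\omega_i;a}$ and $\hdinorm{\boldsymbol{u}^{\mathrm{e}}-\boldsymbol{\psi}_i}{\omega_i^*;a}=\lnorm{\boldsymbol{u}^{\mathrm{e}}-\boldsymbol{\psi}_i}{\omega_i^*;a}$. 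Next I would bound each term by $\lnorm{\boldsymbol{u}^{\mathrm{e}}-\boldsymbol{\psi}_i}{\omega_i^*;a}\le\lnorm{\boldsymbol{u}^{\mathrm{e}}}{\omega_i^*;a}+\lnorm{\boldsymbol{\psi}_i}{\omega_i^*;a}$, square, sum over $i$, and invoke the overlap condition \eqref{pointwise_overlap} for $\{\omega_i^*\}$ (applied also to the boundary pieces $\partial\Omega\cap\partial\omega_i^*$) together with the stability bounds \eqref{eq:stability_inequality}, to get
\[
\sum_{i=1}^{M}\lnorm{\boldsymbol{u}^{\mathrm{e}}-\boldsymbol{\psi}_i}{\omega_i^*;a}^2\ \le\ 4\kappa^*\tilde C^2 ,
\]
whence, using the hypothesis \eqref{local-div-estimate}, $\sum_i\lnorm{\boldsymbol{e}_i}{\omega_i;a}^2\le 4\kappa^*\tilde C^2\epsilon_{\mathrm{max}}^2$. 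This single inequality feeds all three bounds.

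For the velocity and divergence estimates I would start from $\boldsymbol{u}^{\mathrm{e}}-\boldsymbol{u}^{\mathrm{par}}-\boldsymbol{v}=\sum_i\chi_i\boldsymbol{e}_i$. Taking the $\lnorm{\cdot}{\Omega;a}$-norm, using that at every point at most $\kappa$ of the $\chi_i$ are nonzero together with $0\le\chi_i\le1$ and Cauchy--Schwarz, gives $\lnorm{\boldsymbol{u}^{\mathrm{e}}-\boldsymbol{u}^{\mathrm{par}}-\boldsymbol{v}}{\Omega;a}^2\le\kappa\sum_i\lnorm{\boldsymbol{e}_i}{\omega_i;a}^2$, which is the first estimate. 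Since $\di\boldsymbol{e}_i=0$, the product rule gives $\di(\boldsymbol{u}^{\mathrm{e}}-\boldsymbol{u}^{\mathrm{par}}-\boldsymbol{v})=\sum_i\nabla\chi_i\cdot\boldsymbol{e}_i$, and the same overlap argument combined with $\lnorm{\boldsymbol{e}_i}{\omega_i}\le\alpha_1^{1/2}\lnorm{\boldsymbol{e}_i}{\omega_i;a}$ and the gradient bound in \eqref{PU} yields the second estimate.

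For the pressure I would construct an explicit competitor in $Q^{\mathrm{MS}}=S^{\mathrm{p}}_n(\Omega)+Q_0^{\operatorname{RT}}$. On each $\omega_i^0$ I split $(p^{\mathrm{e}}-p^{\mathrm{par}})|_{\omega_i^0}=q^i+c_i$ with $c_i:=\dashint_{\omega_i^0}p^{\mathrm{e}}$ and $q^i:=(p^{\mathrm{e}}-p_i^{\mathrm{par}})|_{\omega_i^0}-c_i\in L^2_0(\omega_i^0)$ (using $\dashint_{\omega_i^0}p_i^{\mathrm{par}}=0$). Since $\sum_i|\omega_i^0|\,c_i=\int_\Omega p^{\mathrm{e}}=0$, the piecewise constant function equal to $c_i$ on each $\omega_i^0$ belongs to $Q_0^{\operatorname{RT}}$, so it remains to approximate each $q^i$ inside $S^{\mathrm{p}}_{n_i}(\omega_i^0)$. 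Writing $\boldsymbol{v}^i=\sum_k\mu_k^i\,\boldsymbol{v}_k^i|_{\omega_i}$, I take the matching pressure $\tilde p^i:=\sum_k\mu_k^i\,p_k^i\in S^{\mathrm{p}}_{n_i}(\omega_i^0)$, where $(\tilde{\boldsymbol{v}}_k^i,p_k^i)$ solves \eqref{eq_pressure_recon}. The crucial point is that $\tilde{\boldsymbol{v}}_k^i=\boldsymbol{v}_k^i|_{\omega_i^0}$: indeed $\boldsymbol{v}_k^i|_{\omega_i^0}$ is divergence-free with the prescribed normal trace on $\partial\omega_i^0$, and since $\boldsymbol{v}_k^i\in\boldsymbol{H}_a(\omega_i^*)$ while any divergence-free field in $\hdi{0}{}{\omega_i^0}$ extends by zero to an element of $\hdi{0}{0}{\omega_i^*}$, it is $a_{\omega_i^0}$-orthogonal to that kernel, hence coincides (together with its accompanying pressure) with the unique solution of \eqref{eq_pressure_recon}; consequently $\tilde{\boldsymbol{V}}^i:=\sum_k\mu_k^i\,\tilde{\boldsymbol{v}}_k^i=\boldsymbol{v}^i|_{\omega_i^0}$. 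Restricting the first equations of \eqref{dualWeakCont}, \eqref{partWeakNeu}, and the $\mu_k^i$-weighted sum of \eqref{eq_pressure_recon} to test functions $\boldsymbol{w}\in\hdi{0}{}{\omega_i^0}$ extended by zero, then subtracting the three and using that $q^i-\tilde p^i$ and $p^{\mathrm{e}}-\phi_i-\tilde p^i$ differ by a constant that $b_{\omega_i^0}(\boldsymbol{w},\cdot)$ annihilates, I obtain $b_{\omega_i^0}(\boldsymbol{w},q^i-\tilde p^i)=-a_{\omega_i^0}(\boldsymbol{e}_i,\boldsymbol{w})$ for all such $\boldsymbol{w}$. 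Since $q^i-\tilde p^i\in L^2_0(\omega_i^0)$, the inf-sup condition for $\hdi{0}{}{\omega_i^0}/L^2_0(\omega_i^0)$ (with constant $\beta(\omega_i^0)\ge\beta_{\mathrm{min}}$) and Cauchy--Schwarz give $\lnorm{q^i-\tilde p^i}{\omega_i^0}\le\beta_{\mathrm{min}}^{-1}\lnorm{\boldsymbol{e}_i}{\omega_i^0;a}\le\beta_{\mathrm{min}}^{-1}\lnorm{\boldsymbol{e}_i}{\omega_i;a}$. Finally, $p^{\mathrm{e}}-p^{\mathrm{par}}$ minus this competitor equals $\sum_i(q^i-\tilde p^i)$, which is supported on the pairwise disjoint $\omega_i^0$, so summing the squares and reusing $\sum_i\lnorm{\boldsymbol{e}_i}{\omega_i;a}^2\le 4\kappa^*\tilde C^2\epsilon_{\mathrm{max}}^2$ gives the third estimate.

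The velocity and divergence estimates amount to routine partition-of-unity and overlap bookkeeping. I expect the pressure estimate to be the main obstacle: one has to split $p^{\mathrm{e}}-p^{\mathrm{par}}$ so that the leftover piecewise-constant function has zero global mean (hence lies in $Q_0^{\operatorname{RT}}$), and—more importantly—recognise the identity $\tilde{\boldsymbol{v}}_k^i=\boldsymbol{v}_k^i|_{\omega_i^0}$, which is exactly what makes the local inf-sup estimate reproduce the velocity error $\boldsymbol{e}_i$ itself rather than $\boldsymbol{e}_i$ plus an uncontrolled correction coming from the pressure reconstruction problem.
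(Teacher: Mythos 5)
Your proposal is correct and follows essentially the same route as the paper: partition-of-unity and overlap bookkeeping (with $\operatorname{div}$ of the local errors vanishing) for the velocity and divergence bounds, the $\kappa^*$-overlap stability estimate for $\sum_i\lnorm{\boldsymbol{u}^{\mathrm{e}}-\boldsymbol{\psi}_i}{\omega_i^*;a}^2$, and for the pressure the identity $\tilde{\boldsymbol{v}}_k=\boldsymbol{v}_k|_{\omega^0}$ plus the local inf-sup argument with the subdomain means absorbed into $Q_0^{\operatorname{RT}}$, which is exactly the content of \cref{lemma_pres_recon} and \cref{globPresEr} that you have inlined.
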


 Before proving the theorem, we give two auxiliary lemmas.
        \begin{lemma}
		\label{lemma_pres_recon}
		For any $\tilde{\boldsymbol{v}}\in \boldsymbol{S}_n^{\mathrm{v}}(\omega)$, there exists $p\in S_n^{\mathrm{p}}(\omega^0)$ such that  
		\begin{equation*}
			a_{\omega^0}(\tilde{\boldsymbol{v}},\boldsymbol{w}) + b_{\omega^0}(\boldsymbol{w},p) = 0 \quad \forall \boldsymbol{w}\in \hdi{0}{}{\omega^0}.
		\end{equation*}
	\end{lemma}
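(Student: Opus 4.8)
The plan is to reconstruct the pressure function $p$ blockwise from the eigenfunction expansion of $\tilde{\boldsymbol{v}}$. Since $\tilde{\boldsymbol{v}}\in \boldsymbol{S}_n^{\mathrm{v}}(\omega)=\mathrm{span}\{\boldsymbol{v}_1|_\omega,\dots,\boldsymbol{v}_n|_\omega\}$, we may write $\tilde{\boldsymbol{v}}=\sum_{k=1}^n c_k\,\boldsymbol{v}_k|_\omega$ with $c_k\in\mathbb{R}$. For each $k$, recall the pair $(\tilde{\boldsymbol{v}}_k,p_k)\in \hdi{}{}{\omega^0}\times L^2_0(\omega^0)$ defined by the local boundary value problem (\ref{eq_pressure_recon}), whose first equation reads $a_{\omega^0}(\tilde{\boldsymbol{v}}_k,\boldsymbol{w})+b_{\omega^0}(\boldsymbol{w},p_k)=0$ for all $\boldsymbol{w}\in\hdi{0}{}{\omega^0}$, and note that the third equation forces $\tilde{\boldsymbol{v}}_k\cdot\boldsymbol{\nu}=\boldsymbol{v}_k\cdot\boldsymbol{\nu}$ on $\partial\omega^0$.

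The key observation is that, for $\boldsymbol{w}\in\hdi{0}{}{\omega^0}$ (vanishing normal trace on $\partial\omega^0$), we have $b_{\omega^0}(\boldsymbol{w},p_k)=-a_{\omega^0}(\tilde{\boldsymbol{v}}_k,\boldsymbol{w})$, but I want this with $\boldsymbol{v}_k$ in place of $\tilde{\boldsymbol{v}}_k$. To bridge this, set $\hat{\boldsymbol{v}}_k:=\tilde{\boldsymbol{v}}_k-\boldsymbol{v}_k|_{\omega^0}$; then $\hat{\boldsymbol{v}}_k\in\hdi{0}{}{\omega^0}$ by the matching normal trace, and $\di\hat{\boldsymbol{v}}_k=\di\tilde{\boldsymbol{v}}_k-\di\boldsymbol{v}_k=0$ since $\boldsymbol{v}_k$ is $a$-harmonic hence divergence-free (it lies in $\boldsymbol{H}_a(\omega^*)\subset\hdi{N}{0}{\omega^*}$) and $\tilde{\boldsymbol{v}}_k$ is divergence-free by the second equation in (\ref{eq_pressure_recon}). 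Thus $\hat{\boldsymbol{v}}_k\in\hdi{0}{0}{\omega^0}$ and solves a problem of the same type as (\ref{enrProb}) with zero right-hand side up to the contribution of $\boldsymbol{v}_k$; more directly, testing the first equation of (\ref{eq_pressure_recon}) against an arbitrary $\boldsymbol{w}\in\hdi{0}{}{\omega^0}$ gives $a_{\omega^0}(\boldsymbol{v}_k,\boldsymbol{w})+b_{\omega^0}(\boldsymbol{w},p_k)=-a_{\omega^0}(\hat{\boldsymbol{v}}_k,\boldsymbol{w})$. I would then argue that $a_{\omega^0}(\hat{\boldsymbol{v}}_k,\boldsymbol{w})=0$ for all $\boldsymbol{w}\in\hdi{0}{}{\omega^0}$: indeed, one can test with $\boldsymbol{w}$ ranging over $\hdi{0}{0}{\omega^0}$ and over a complement; the cleanest route is to note $\hat{\boldsymbol{v}}_k$ together with $-p_k$ solves the homogeneous version of (\ref{enrProb}) (zero data), which by well-posedness forces $\hat{\boldsymbol{v}}_k=0$ and hence $\tilde{\boldsymbol{v}}_k=\boldsymbol{v}_k|_{\omega^0}$. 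Consequently $a_{\omega^0}(\boldsymbol{v}_k,\boldsymbol{w})+b_{\omega^0}(\boldsymbol{w},p_k)=0$ for all $\boldsymbol{w}\in\hdi{0}{}{\omega^0}$.

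Having established this relation for each basis eigenfunction, I set $p:=\sum_{k=1}^n c_k p_k\in S_n^{\mathrm{p}}(\omega^0)$ and use bilinearity: $a_{\omega^0}(\tilde{\boldsymbol{v}},\boldsymbol{w})+b_{\omega^0}(\boldsymbol{w},p)=\sum_{k=1}^n c_k\big(a_{\omega^0}(\boldsymbol{v}_k,\boldsymbol{w})+b_{\omega^0}(\boldsymbol{w},p_k)\big)=0$ for all $\boldsymbol{w}\in\hdi{0}{}{\omega^0}$, which is exactly the claimed identity.

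The main obstacle is the assertion $\tilde{\boldsymbol{v}}_k=\boldsymbol{v}_k|_{\omega^0}$ (equivalently $\hat{\boldsymbol{v}}_k=0$), since naively (\ref{eq_pressure_recon}) with prescribed normal trace need not reproduce $\boldsymbol{v}_k$. One must use crucially that $\boldsymbol{v}_k$ is itself divergence-free \emph{and} $a$-harmonic on $\omega^*\supset\omega^0$, so that $\boldsymbol{v}_k|_{\omega^0}$ already satisfies the first two equations of (\ref{eq_pressure_recon}) with some associated pressure; testing the $a$-harmonicity of $\boldsymbol{v}_k$ against functions in $\hdi{0}{0}{\omega^0}\subset\hdi{0}{0}{\omega^*}$ supplies the needed orthogonality. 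Once this is in hand, the rest is a routine linear-combination argument. An alternative, should the identity $\tilde{\boldsymbol{v}}_k=\boldsymbol{v}_k|_{\omega^0}$ not hold verbatim, is to work directly: test (\ref{eq_pressure_recon}) with $\boldsymbol{w}$ and decompose $a_{\omega^0}(\tilde{\boldsymbol{v}}_k,\boldsymbol{w})=a_{\omega^0}(\boldsymbol{v}_k,\boldsymbol{w})+a_{\omega^0}(\hat{\boldsymbol{v}}_k,\boldsymbol{w})$, then absorb the $\hat{\boldsymbol{v}}_k$-term by noting it vanishes against divergence-free, normal-trace-free test functions (which is all of $\hdi{0}{}{\omega^0}$ modulo a gradient that pairs to zero in $b_{\omega^0}$ anyway) — but the well-posedness route above is the most transparent.
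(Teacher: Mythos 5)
Your overall skeleton matches the paper's proof: everything reduces to showing $\tilde{\boldsymbol{v}}_k=\boldsymbol{v}_k|_{\omega^0}$, after which the first equation of (\ref{eq_pressure_recon}) becomes exactly the claimed identity for each $\boldsymbol{v}_k$ and linearity finishes the argument. The gap is in your justification of $\hat{\boldsymbol{v}}_k:=\tilde{\boldsymbol{v}}_k-\boldsymbol{v}_k|_{\omega^0}=0$. The pair $(\hat{\boldsymbol{v}}_k,-p_k)$ does \emph{not} solve the homogeneous version of (\ref{enrProb}): substituting $\tilde{\boldsymbol{v}}_k=\hat{\boldsymbol{v}}_k+\boldsymbol{v}_k$ into the first equation of (\ref{eq_pressure_recon}) leaves the term $a_{\omega^0}(\boldsymbol{v}_k,\boldsymbol{w})$, which for a general $\boldsymbol{w}\in\hdi{0}{}{\omega^0}$ is neither zero (the $a$-harmonicity of $\boldsymbol{v}_k$ only kills it for divergence-free test functions, i.e.\ for $\boldsymbol{w}\in\hdi{0}{0}{\omega^0}$) nor of the form $b_{\omega^0}(\boldsymbol{w},\cdot)$ unless you first prove that $\boldsymbol{v}_k|_{\omega^0}$ admits an associated pressure $r_k\in L^2_0(\omega^0)$ with $a_{\omega^0}(\boldsymbol{v}_k,\boldsymbol{w})+b_{\omega^0}(\boldsymbol{w},r_k)=0$ for all $\boldsymbol{w}\in\hdi{0}{}{\omega^0}$ --- a de Rham/closed-range fact that you invoke (``some associated pressure'') but never establish; and even granting it, the correct companion pressure for $\hat{\boldsymbol{v}}_k$ would be $p_k-r_k$, not $-p_k$. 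Likewise, the intermediate claim $a_{\omega^0}(\hat{\boldsymbol{v}}_k,\boldsymbol{w})=0$ for \emph{all} $\boldsymbol{w}\in\hdi{0}{}{\omega^0}$ cannot be established before you know $\hat{\boldsymbol{v}}_k=0$, and the remark that the non-divergence-free part of $\boldsymbol{w}$ ``pairs to zero in $b_{\omega^0}$'' is false --- that part is precisely where $b_{\omega^0}$ does not vanish.

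The fix is the elementary energy argument the paper uses, and it needs only the subspace $\hdi{0}{0}{\omega^0}$. For $\boldsymbol{w}\in\hdi{0}{0}{\omega^0}$ one has $b_{\omega^0}(\boldsymbol{w},p_k)=0$, so the first equation of (\ref{eq_pressure_recon}) gives $a_{\omega^0}(\tilde{\boldsymbol{v}}_k,\boldsymbol{w})=0$; extending such $\boldsymbol{w}$ by zero to $\omega^*$ and using $\boldsymbol{v}_k\in\boldsymbol{H}_a(\omega^*)$ gives $a_{\omega^0}(\boldsymbol{v}_k,\boldsymbol{w})=0$. Hence $a_{\omega^0}(\hat{\boldsymbol{v}}_k,\boldsymbol{w})=0$ for all $\boldsymbol{w}\in\hdi{0}{0}{\omega^0}$, and since $\hat{\boldsymbol{v}}_k$ itself belongs to this subspace (zero normal trace by the third equation of (\ref{eq_pressure_recon}), divergence-free as you observed), choosing $\boldsymbol{w}=\hat{\boldsymbol{v}}_k$ yields $\Vert\hat{\boldsymbol{v}}_k\Vert_{L^2(\omega^0;a)}=0$, i.e.\ $\hat{\boldsymbol{v}}_k=0$ and $\tilde{\boldsymbol{v}}_k=\boldsymbol{v}_k|_{\omega^0}$. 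With this step supplied, your linear-combination argument goes through verbatim and coincides with the paper's proof.
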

	\begin{proof}
For each $k=1,\cdots,n$, let $(\boldsymbol{v},p_{k})$ be the solution of (\ref{eq_pressure_recon}). Since ${\boldsymbol{v}_k}|_{\omega}\in \boldsymbol{S}_n^{\mathrm{v}}(\omega)\subset\hdi{}{0}{\omega}$, we have
	$
	{\boldsymbol{v}_k}|_{\omega^0} - \boldsymbol{v} \in \hdi{0}{0}{\omega^0}.
	$
	The first equation of (\ref{eq_pressure_recon}) together with  ${\boldsymbol{v}_k}\in \boldsymbol{H}_a(\omega^*)$ implies
	$
	a_{\omega^0}({\boldsymbol{v}_k}|_{\omega^0}-\boldsymbol{v},\boldsymbol{w}) = 0$ for all $\boldsymbol{w}\in \hdi{0}{0}{\omega^0}$.
	Inserting $\boldsymbol{w}= {\boldsymbol{v}_k|}_{\omega^0}-\boldsymbol{v}$ in the equation above, we conclude that $\boldsymbol{v}={\boldsymbol{v}_k|}_{\omega^0}$ and consequently
	$
	a_{\omega^0}(\boldsymbol{v}_k,\boldsymbol{w}) + b_{\omega^0}(\boldsymbol{w},p_k) = 0 $ for all  $\boldsymbol{w}\in \hdi{0}{}{\omega^0}$.
	The desired result follows by using the linearity of the problem and the fact that $\boldsymbol{S}_n^{\mathrm{v}}(\omega)={\rm span}\{\boldsymbol{v}_k\}_{k=1}^{M}$.
	\end{proof}
 
 Using \cref{lemma_pres_recon}, it can be shown that $S_n^{\mathrm{p}}(\omega^0)$ has approximation properties which are essentially as good as those of $\boldsymbol{S}_n^{\mathrm{v}}(\omega)$.
	\begin{lemma}
		\label{globPresEr}
		Let $p^{\mathrm{par}}$ be the global particular pressure defined by (\ref{def_glob_par}), and $Q^{\mathrm{MS}}$ be the global pressure approximation space defined by (\ref{def_MS_spaces}). Then, 
		\begin{equation*}
			\inf_{p\in Q^{\mathrm{MS}}} \lnorm{p^{\mathrm{e}}-p^{\mathrm{par}}-p}{\Omega}^2
			\leq 	\frac{1}{\beta_{\mathrm{min}}^2}	\sum_{i=1}^{M} \inf_{\boldsymbol{v}^i\in \boldsymbol{S}^{\mathrm{v}}_{n_i}(\omega_i)} \lnorm{\boldsymbol{u}^{\mathrm{e}}-\boldsymbol{u}_i^{\mathrm{par}}-\boldsymbol{v}^i}{\omega_i^0;a}^2,
		\end{equation*}
		where $\beta_{\mathrm{min}}$ is defined in \cref{IS_MS}.
	\end{lemma}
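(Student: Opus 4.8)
The plan is to reduce the global pressure approximation to a family of independent local estimates on the disjoint partition $\{\omega_i^0\}_{i=1}^M$, each obtained by combining \cref{lemma_pres_recon} with the local inf-sup condition, and then to glue the local pieces back together, with the leftover constants absorbed into $Q_0^{\mathrm{RT}}$. First I would record two local orthogonality relations. For any $\boldsymbol{w}\in\hdi{0}{}{\omega_i^0}$, its extension by zero belongs both to $\hdi{0}{}{\Omega}$ and to $\hdi{0}{}{\omega_i^*}$ (it is supported away from $\partial\omega_i^*\cap\Omega$ since $\omega_i^0\subset\omega_i$, and has zero normal trace on $\partial\Omega$), so testing the first equations of \eqref{dualWeakCont} and of \eqref{partWeakNeu} with it gives
\[
a_{\omega_i^0}(\boldsymbol{u}^{\mathrm{e}},\boldsymbol{w}) + b_{\omega_i^0}(\boldsymbol{w},p^{\mathrm{e}}) = 0,\qquad a_{\omega_i^0}(\boldsymbol{\psi}_i,\boldsymbol{w}) + b_{\omega_i^0}(\boldsymbol{w},\phi_i) = 0.
\]
Since $b_{\omega_i^0}(\boldsymbol{w},\cdot)$ annihilates constants when $\boldsymbol{w}$ has vanishing normal trace, and since $\boldsymbol{u}_i^{\mathrm{par}}=\boldsymbol{\psi}_i$ and $p_i^{\mathrm{par}}=\phi_i-\dashint_{\omega_i^0}\phi_i$ on $\omega_i^0$, subtracting yields
\[
a_{\omega_i^0}(\boldsymbol{u}^{\mathrm{e}}-\boldsymbol{u}_i^{\mathrm{par}},\boldsymbol{w}) + b_{\omega_i^0}(\boldsymbol{w},p^{\mathrm{e}}-p_i^{\mathrm{par}}) = 0\qquad\forall\,\boldsymbol{w}\in\hdi{0}{}{\omega_i^0}.
\]

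Next, for a fixed $\boldsymbol{v}^i\in\boldsymbol{S}^{\mathrm{v}}_{n_i}(\omega_i)$ let $p^i\in S^{\mathrm{p}}_{n_i}(\omega_i^0)$ be the pressure associated to $\boldsymbol{v}^i$ by \cref{lemma_pres_recon}, so that $a_{\omega_i^0}(\boldsymbol{v}^i,\boldsymbol{w})+b_{\omega_i^0}(\boldsymbol{w},p^i)=0$ for all $\boldsymbol{w}\in\hdi{0}{}{\omega_i^0}$. Subtracting this from the identity above, then applying the Cauchy--Schwarz inequality for $a_{\omega_i^0}$ together with $\lnorm{\boldsymbol{w}}{\omega_i^0;a}\le\hdinorm{\boldsymbol{w}}{\omega_i^0;a}$, one obtains
\[
\sup_{\boldsymbol{w}\in\hdi{0}{}{\omega_i^0}}\frac{b_{\omega_i^0}(\boldsymbol{w},\,p^{\mathrm{e}}-p_i^{\mathrm{par}}-p^i)}{\hdinorm{\boldsymbol{w}}{\omega_i^0;a}}\;\le\;\lnorm{\boldsymbol{u}^{\mathrm{e}}-\boldsymbol{u}_i^{\mathrm{par}}-\boldsymbol{v}^i}{\omega_i^0;a}.
\]
Because $b_{\omega_i^0}(\boldsymbol{w},\cdot)$ only sees the mean-zero part of its second argument, and $p_i^{\mathrm{par}},p^i\in L^2_0(\omega_i^0)$, the left-hand side is, by the local inf-sup condition, at least $\beta(\omega_i^0)$ times the $L^2(\omega_i^0)$-norm of $p^{\mathrm{e}}-\dashint_{\omega_i^0}p^{\mathrm{e}}-p_i^{\mathrm{par}}-p^i$. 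This gives the per-subdomain estimate
\[
\lnorm{p^{\mathrm{e}}-\dashint_{\omega_i^0}p^{\mathrm{e}}-p_i^{\mathrm{par}}-p^i}{\omega_i^0}\;\le\;\frac{1}{\beta(\omega_i^0)}\,\lnorm{\boldsymbol{u}^{\mathrm{e}}-\boldsymbol{u}_i^{\mathrm{par}}-\boldsymbol{v}^i}{\omega_i^0;a}.
\]

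To globalize, I would choose as trial pressure $p:=\sum_{i=1}^M p^i + c\in Q^{\mathrm{MS}}$, where $c$ is the function equal to $-\dashint_{\omega_i^0}p^{\mathrm{e}}$ on each $\omega_i^0$; this $c$ is piecewise constant over $\{\omega_i^0\}$ and has vanishing mean on $\Omega$ precisely because $\int_\Omega p^{\mathrm{e}}\dx=0$, hence $c\in Q_0^{\mathrm{RT}}$ and $p\in Q^{\mathrm{MS}}$. Using the pairwise disjointness of the $\omega_i^0$ and $p^{\mathrm{par}}=\sum_i p_i^{\mathrm{par}}$ (each term supported in $\omega_i^0$), the global error decomposes as $\lnorm{p^{\mathrm{e}}-p^{\mathrm{par}}-p}{\Omega}^2=\sum_{i=1}^M\lnorm{p^{\mathrm{e}}-\dashint_{\omega_i^0}p^{\mathrm{e}}-p_i^{\mathrm{par}}-p^i}{\omega_i^0}^2$. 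Inserting the per-subdomain estimate, bounding $\beta(\omega_i^0)\ge\beta_{\mathrm{min}}$, and finally taking the infimum over each $\boldsymbol{v}^i$ independently then yields the asserted bound.

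The one point requiring care — and the reason $Q_0^{\mathrm{RT}}$ must be part of $Q^{\mathrm{MS}}$ — is the treatment of the local means $\dashint_{\omega_i^0}p^{\mathrm{e}}$: functions in $S^{\mathrm{p}}_{n_i}(\omega_i^0)$ are mean-free on $\omega_i^0$ by construction, so the local inf-sup argument controls only the mean-free part of $p^{\mathrm{e}}-p_i^{\mathrm{par}}$, and the missing constants must be supplied from elsewhere. The key observation that makes the globalization close is that the $|\omega_i^0|$-weighted sum of these local means equals $\int_\Omega p^{\mathrm{e}}\dx=0$, so the required piecewise-constant correction genuinely lies in the zero-mean space $Q_0^{\mathrm{RT}}$; the remaining bookkeeping (that the relevant extensions by zero stay in the correct spaces and that $b_{\omega_i^0}(\boldsymbol{w},\cdot)$ kills constants) is routine.
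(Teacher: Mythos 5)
Your argument is correct and takes essentially the same route as the paper's own proof: both combine \cref{lemma_pres_recon} with the local error equation on $\omega_i^0$ (obtained from (\ref{dualWeakCont}) and (\ref{partWeakNeu}) via zero extension of test functions), correct by the local means $\dashint_{\omega_i^0}p^{\mathrm{e}}\dx$ absorbed into $Q_0^{\operatorname{RT}}$ (whose admissibility follows from $\int_{\Omega}p^{\mathrm{e}}\dx=0$), apply the local inf-sup constant $\beta(\omega_i^0)\geq\beta_{\mathrm{min}}$, and sum over the disjoint subdomains. The only slip is a sign: for your decomposition of $\lnorm{p^{\mathrm{e}}-p^{\mathrm{par}}-p}{\Omega}^2$ to hold, the piecewise constant $c$ must equal $+\dashint_{\omega_i^0}p^{\mathrm{e}}\dx$ on each $\omega_i^0$ rather than $-\dashint_{\omega_i^0}p^{\mathrm{e}}\dx$, which is harmless since the zero-mean verification is unchanged.
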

	\begin{proof}
		Let $\boldsymbol{v}^i\in \boldsymbol{S}^{\mathrm{v}}_{n_i}(\omega_i)$ for $i=1,\cdots,M$. By \cref{lemma_pres_recon}, there exist $p^i\in S^{\mathrm{p}}_{n_i}(\omega_i^0)$ with	
		\begin{equation*}
			a_{\omega_i^0}(\boldsymbol{v}^i,\boldsymbol{w}) + b_{\omega_i^0}(\boldsymbol{w},p^i) = 0 \quad\forall\boldsymbol{w}\in\hdi{0}{}{\omega_i^0}. 
		\end{equation*}
		On the other hand, we can deduce from (\ref{partWeakNeu}) and (\ref{dualWeakCont}) that
		\begin{equation*}
			a_{\omega_i^*}(\boldsymbol{u}^{\mathrm{e}}-\boldsymbol{{\psi}}_i,\boldsymbol{w}) + b_{\omega_i^*}(\boldsymbol{w},p^{\mathrm{e}}-\phi_i) = 0 \quad\forall\boldsymbol{w}\in\hdi{0}{}{\omega_i^*}. 
		\end{equation*}
		Subtracting the two equations above yields the error equation
		\begin{equation}
			\label{globPres_errEq0_proof}
			a_{\omega_i^0}(\boldsymbol{u}^{\mathrm{e}}-\boldsymbol{u}_i^{\mathrm{par}}-\boldsymbol{v}^i,\boldsymbol{w}) + b_{\omega_i^0}(\boldsymbol{w},p^{\mathrm{e}}-p_i^{\mathrm{par}}-p^i) = 0\quad\forall\boldsymbol{w}\in\hdi{0}{}{\omega_i^0}. 
		\end{equation}
		Let us choose $c^i = \dashint_{\omega_i^0}p^{\mathrm{e}} \dx$. It holds
		$b_{\omega_i^0}(\boldsymbol{w},c^i) = 0$ for all $\boldsymbol{w}\in\hdi{0}{}{\omega_i^0}$. Adding this equation to (\ref{globPres_errEq0_proof}), we get 
		\begin{equation}
			\label{globPres_errEq_proof}
			a_{\omega_i^0}(\boldsymbol{u}^{\mathrm{e}}-\boldsymbol{u}_i^{\mathrm{par}}-\boldsymbol{v}^i,\boldsymbol{w}) + b_{\omega_i^0}(\boldsymbol{w},p^{\mathrm{e}}-p_i^{\mathrm{par}}-p^i-c^i) = 0 \quad\forall\boldsymbol{w}\in\hdi{0}{}{\omega_i^0}. 
		\end{equation}
		Since $p^{\mathrm{e}}-p_i^{\mathrm{par}}-p^i-c^i \in L^2_0(\omega_i^0)$, we can use the inf-sup stability of the pair $\hdi{0}{}{\omega_i^0}/L^2_0(\omega_i^0)$ and (\ref{globPres_errEq_proof}) to conclude
		\begin{align}
			\label{loc_est_proof}
			\beta_{\mathrm{min}} 
			&\leq \sup_{\boldsymbol{w}\in\hdi{0}{}{\omega_i^0}}
			\frac{b_{\omega_i^0}(\boldsymbol{w},p^{\mathrm{e}}-p_i^{\mathrm{par}}-p^i-c^i)}{\hdinorm{\boldsymbol{w}}{\omega_i^0;a}\lnorm{p^{\mathrm{e}}-p_i^{\mathrm{par}}-p^i-c^i}{\omega_i^0}}\nonumber\\
			&\leq \sup_{\boldsymbol{w}\in\hdi{0}{}{\omega_i^0}}
			\frac{|a_{\omega_i^0}(\boldsymbol{u}^{\mathrm{e}}-\boldsymbol{u}_i^{\mathrm{par}}-\boldsymbol{v}^i,\boldsymbol{w})|}{\hdinorm{\boldsymbol{w}}{\omega_i^0;a}\lnorm{p^{\mathrm{e}}-p_i^{\mathrm{par}}-p^i-c^i}{\omega_i^0}}\nonumber\\
			&\leq \frac{\lnorm{\boldsymbol{u}^{\mathrm{e}}-\boldsymbol{u}_i^{\mathrm{par}}-\boldsymbol{v}^i}{\omega_i^0;a}}{\lnorm{p^{\mathrm{e}}-p_i^{\mathrm{par}}-p^i-c^i}{\omega_i^0}}.
		\end{align}
		Let us write
		$p = \sum_{i=1}^{M}(p^i+\mathbbm{1}_{\omega_i^0}c^i)$
		with the indicator function $\mathbbm{1}_{\omega_i^0}$. 
		We have $p\in Q^{\mathrm{MS}}$. Summing up (\ref{loc_est_proof}) for $i=1,2,\cdots,M$, we see that
		\begin{align*}
			\lnorm{p^{\mathrm{e}}-p^{\mathrm{par}}-p}{\Omega}^2
			&= \sum_{i=1}^M\lnorm{p^{\mathrm{e}}-p_i^{\mathrm{par}}-p^i-c^i}{\omega_i^0}^2\\
			&\leq \sum_{i=1}^M \beta_{\mathrm{min}}^{-2}\lnorm{\boldsymbol{u}^{\mathrm{e}}-\boldsymbol{u}_i^{\mathrm{par}}-\boldsymbol{v}^i}{\omega_i^0;a}^2.
		\end{align*}
		Since the $\boldsymbol{v}^i$'s are arbitrary, we obtain the desired result immediately. 
	\end{proof}
	\begin{remark}
		In (\ref{loc_est_proof}) we see why we need to add the Raviart-Thomas pair to the multiscale spaces: The error equation (\ref{globPres_errEq_proof}) holds only for $\boldsymbol{w}\in\hdi{0}{}{\omega_i^0}$. Using the stability of the pair $\hdi{}{}{\omega_i^0}/L^2(\omega_i^0)$ would not allow us to switch from $b(\cdot,\cdot)$ to $a(\cdot,\cdot)$.
	\end{remark}
	
	\begin{proof}[Proof of \cref{GlobLocErr}]
Noting that (\ref{local-div-estimate}) is indeed the weighted $L^{2}$-norm error estimate, one readily sees that
		\begin{align}
			\label{u_glob_proof}
			\begin{split}
				\lnorm{\boldsymbol{u}^{\mathrm{e}}-\boldsymbol{u}^{\mathrm{par}}-\boldsymbol{v}}{\Omega;a}^2
				\leq \kappa\Big(\max_{1\leq i\leq M}\epsilon_i^2\Big)\sum_{i=1}^{M}\lnorm{\boldsymbol{u}^{\mathrm{e}} - \boldsymbol{{\psi}}_i}{\omega_i^*;a}^2	
			\end{split}
		\end{align}
		and for the divergence part
		\begin{align}
			\label{divu_glob_proof}
			\lnorm{\di(\boldsymbol{u}^{\mathrm{e}}-\boldsymbol{u}^{\mathrm{par}}-\boldsymbol{v})}{\Omega}^2
			\leq \kappa \alpha_1 \Big(\max\limits_{1\leq i\leq M} \epsilon^{2}_i\norm{\nabla\chi_i}{L^\infty(\omega_i)}^{2}\Big)\sum_{i=1}^{M}\lnorm{\boldsymbol{u}^{\mathrm{e}} - \boldsymbol{{\psi}}_i}{\omega_i^*;a}^2.
		\end{align}
		Furthermore, we can use the stability estimates (\ref{eq:stability_inequality}) to deduce
		\begin{align}
			\label{u_min_part_proof}
			\sum_{i=1}^{M}\lnorm{\boldsymbol{u}^{\mathrm{e}} - \boldsymbol{{\psi}}_i}{\omega_i^*;a}^2
			\leq 4 \kappa^* \left( C_{\operatorname{stab}}^2 + C_{\operatorname{max}}^2 \right) \left( \lnorm{f}{\Omega}^{2} + \lnorm{g_N}{\partial\Omega}^{2} \right).
		\end{align}
		The estimates for the velocity now follow from (\ref{u_glob_proof}), (\ref{divu_glob_proof}) and (\ref{u_min_part_proof}), and the estimate for the pressure is a consequence of \cref{globPresEr}, (\ref{local-div-estimate}), and (\ref{u_min_part_proof}).
	\end{proof}

 \subsection{Quasi-optimal convergence}
 By now, we have shown that our multiscale spaces are inf-sup stable and obtained coerciveness on the kernel by introducing the augmented formulation (\ref{globWeakAug}). We can thus apply the celebrated Babuska--Brezzi theory (see, e.g., \cite[Theorem 5.2.2]{boffi2013mixed}) to prove quasi-optimal convergence for our method. Before giving the result, let us recall the parameter $\gamma$ in (\ref{globWeakAug}) and introduce two constants: 
 \begin{equation*}
\gamma_{\rm min} = \min\{1,\gamma\}, \quad \gamma_{\rm max} =  \max\{1, \gamma\}.     
 \end{equation*}

\begin{theorem}\label{quasi-optimality}
Let $(\boldsymbol{u}^{\mathrm{e}},p^{\mathrm{e}})$ be the solution of problem (\ref{globWeakAug}). Problem (\ref{GFEM}) has a unique solution $(\boldsymbol{u}^{\mathrm{G}},p^{\mathrm{G}})$ with $\boldsymbol{u}^{\mathrm{G}} \in \boldsymbol{u}^{\mathrm{par}} +\boldsymbol{V}^{\mathrm{MS}}$ and $p^{\mathrm{G}}\in p^{\mathrm{par}}+Q^{\mathrm{MS}}$. Moreover, 
			\begin{align*}
				\hdinorm{\boldsymbol{u}^{\mathrm{e}}-{u}^{\mathrm{G}}}{\Omega;\,a} 
				\leq \gamma_{\rm min}^{-1}E_p^{\mathrm{MS}}
				+ \Big(\frac{2 \gamma_{\rm max}}{\gamma_{\rm min}}+ 
				\frac{2 \gamma_{\rm max}^{1/2}}{\gamma_{\rm min}^{1/2}\beta^{\mathrm{MS}}} \Big) E_{\boldsymbol{u}}^{\mathrm{MS}},\\
				\lnorm{p^{\mathrm{e}}-p^{\mathrm{G}}}{\Omega} 
				\leq \frac{3 \gamma_{\rm max}^{1/2}}{\gamma_{\rm min}^{1/2}\beta^{\mathrm{MS}}} E_p^{\mathrm{MS}}  + \Big(\frac{2 \gamma_{\rm max}^{3/2}}{\gamma_{\rm min}^{1/2}\beta^{\mathrm{MS}}}+ 
				\frac{\gamma_{\rm max}}{(\beta^{\mathrm{MS}})^2} \Big) E_{\boldsymbol{u}}^{\mathrm{MS}},
			\end{align*}
where $\beta^{\mathrm{MS}}$ is the inf-sup constant associated with the pair $\boldsymbol{V}^{\mathrm{MS}}/Q^{\mathrm{MS}}$, and 
	\begin{align*}
		E_p^{\mathrm{MS}} := \inf_{q\in Q^{\mathrm{MS}}}\lnorm{p^{\mathrm{e}}-p^{\mathrm{par}}-q}{\Omega}
		,\quad
		E_{\boldsymbol{u}}^{\mathrm{MS}} := \inf_{\boldsymbol{v}\in \boldsymbol{V}^{\mathrm{MS}}}\hdinorm{\boldsymbol{u}^{\mathrm{e}}-\boldsymbol{u}^{\mathrm{par}}-\boldsymbol{v}}{\Omega;\,a}.
	\end{align*}
\end{theorem}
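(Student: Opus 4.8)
The plan is to invoke the Babuska--Brezzi theory for the discrete saddle-point problem (\ref{GFEM}), combined with a careful tracking of constants. First, well-posedness: the augmented form obeys $a^{\gamma}(\boldsymbol{v},\boldsymbol{v})=\lnorm{\boldsymbol{v}}{\Omega;a}^{2}+\gamma\lnorm{\di\boldsymbol{v}}{\Omega}^{2}$, hence it is elliptic on all of $\hdi{}{}{\Omega}$ with constant $\gamma_{\rm min}$ and continuous with constant $\gamma_{\rm max}$, both with respect to $\hdinorm{\cdot}{\Omega;a}$; in particular it is elliptic on the discrete kernel $\{\boldsymbol{v}\in\boldsymbol{V}^{\mathrm{MS}}:b(\boldsymbol{v},q)=0\ \forall q\in Q^{\mathrm{MS}}\}$, and together with the inf-sup stability of \cref{IS_MS}, \cite[Theorem 4.2.3]{boffi2013mixed} gives existence and uniqueness of $(\boldsymbol{u}^{\mathrm{G}},p^{\mathrm{G}})$. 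The local particular functions are accommodated by passing to the shifted unknowns $\hat{\boldsymbol{u}}:=\boldsymbol{u}^{\mathrm{e}}-\boldsymbol{u}^{\mathrm{par}}\in\hdi{0}{}{\Omega}$ and $\hat{p}:=p^{\mathrm{e}}-p^{\mathrm{par}}\in L^{2}_{0}(\Omega)$ (legitimate since $\boldsymbol{u}^{\mathrm{par}}\cdot\boldsymbol{\nu}=g_{N}$ on $\partial\Omega$ and $p^{\mathrm{par}}\in L^{2}_{0}(\Omega)$), whose Galerkin approximation in $\boldsymbol{V}^{\mathrm{MS}}\times Q^{\mathrm{MS}}$ is exactly $(\boldsymbol{u}^{\mathrm{MS}},p^{\mathrm{MS}})$; the quantities $E_{\boldsymbol{u}}^{\mathrm{MS}}$ and $E_{p}^{\mathrm{MS}}$ are then the best-approximation errors of $\hat{\boldsymbol{u}},\hat{p}$ in $\boldsymbol{V}^{\mathrm{MS}},Q^{\mathrm{MS}}$.

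Subtracting (\ref{GFEM}) from (\ref{globWeakAug}) tested against $\boldsymbol{V}^{\mathrm{MS}}\times Q^{\mathrm{MS}}$ yields the Galerkin orthogonalities $a^{\gamma}(\boldsymbol{u}^{\mathrm{e}}-\boldsymbol{u}^{\mathrm{G}},\boldsymbol{v})+b(\boldsymbol{v},p^{\mathrm{e}}-p^{\mathrm{G}})=0$ for $\boldsymbol{v}\in\boldsymbol{V}^{\mathrm{MS}}$ and $b(\boldsymbol{u}^{\mathrm{e}}-\boldsymbol{u}^{\mathrm{G}},q)=0$ for $q\in Q^{\mathrm{MS}}$. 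From here I would run the classical Céa-type argument for mixed methods (the abstract version of which is \cite[Theorem 5.2.2]{boffi2013mixed}), but carried out in the $a^{\gamma}$-energy norm $\|\boldsymbol{v}\|_{\gamma}:=a^{\gamma}(\boldsymbol{v},\boldsymbol{v})^{1/2}$, in which $a^{\gamma}$ is an inner product (its continuity and coercivity constants both equal $1$), $\lnorm{\di\boldsymbol{v}}{\Omega}\le\gamma^{-1/2}\|\boldsymbol{v}\|_{\gamma}$, $\gamma_{\rm min}^{1/2}\hdinorm{\boldsymbol{v}}{\Omega;a}\le\|\boldsymbol{v}\|_{\gamma}\le\gamma_{\rm max}^{1/2}\hdinorm{\boldsymbol{v}}{\Omega;a}$, and the pair $\boldsymbol{V}^{\mathrm{MS}}/Q^{\mathrm{MS}}$ remains inf-sup stable with constant $\gamma_{\rm max}^{-1/2}\beta^{\mathrm{MS}}$. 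For the velocity: for any $\tilde{\boldsymbol{v}}\in\boldsymbol{V}^{\mathrm{MS}}$, the inf-sup property produces a corrector $\boldsymbol{z}\in\boldsymbol{V}^{\mathrm{MS}}$ with $b(\boldsymbol{z},\cdot)=b(\hat{\boldsymbol{u}}-\tilde{\boldsymbol{v}},\cdot)$ on $Q^{\mathrm{MS}}$ and $\hdinorm{\boldsymbol{z}}{\Omega;a}\le(\beta^{\mathrm{MS}})^{-1}\lnorm{\di(\hat{\boldsymbol{u}}-\tilde{\boldsymbol{v}})}{\Omega}$, so that $\boldsymbol{w}_{0}:=\boldsymbol{u}^{\mathrm{MS}}-\tilde{\boldsymbol{v}}-\boldsymbol{z}$ lies in the discrete kernel; testing $a^{\gamma}$-ellipticity against $\boldsymbol{w}_{0}$ and inserting the Galerkin orthogonality (whereupon the $b$-term collapses to $b(\boldsymbol{w}_{0},\hat{p}-q)$ for an arbitrary $q\in Q^{\mathrm{MS}}$, since $\boldsymbol{w}_{0}$ is in the kernel) bounds $\|\boldsymbol{w}_{0}\|_{\gamma}$ by $\|\hat{\boldsymbol{u}}-\tilde{\boldsymbol{v}}-\boldsymbol{z}\|_{\gamma}$ and $\gamma^{-1/2}\lnorm{\hat{p}-q}{\Omega}$; a triangle inequality for $\boldsymbol{u}^{\mathrm{e}}-\boldsymbol{u}^{\mathrm{G}}=(\hat{\boldsymbol{u}}-\tilde{\boldsymbol{v}})-\boldsymbol{z}-\boldsymbol{w}_{0}$, conversion back to $\hdinorm{\cdot}{\Omega;a}$, and taking infima over $\tilde{\boldsymbol{v}}$ and $q$ give the velocity estimate. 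For the pressure: apply the inf-sup property to $p^{\mathrm{MS}}-q\in Q^{\mathrm{MS}}$, choose $\boldsymbol{v}^{\ast}\in\boldsymbol{V}^{\mathrm{MS}}$ nearly realizing the supremum, rewrite $b(\boldsymbol{v}^{\ast},p^{\mathrm{MS}}-q)=a^{\gamma}(\boldsymbol{u}^{\mathrm{e}}-\boldsymbol{u}^{\mathrm{G}},\boldsymbol{v}^{\ast})+b(\boldsymbol{v}^{\ast},\hat{p}-q)$ via Galerkin orthogonality, bound the first term by the (energy-norm) velocity estimate and the second by $\gamma^{-1/2}\lnorm{\hat{p}-q}{\Omega}\|\boldsymbol{v}^{\ast}\|_{\gamma}$, and finish with a triangle inequality for $p^{\mathrm{e}}-p^{\mathrm{G}}=(\hat{p}-q)-(p^{\mathrm{MS}}-q)$.

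The argument is conceptually routine; the work lies in the constant bookkeeping needed to land on the stated dependence on $\gamma_{\rm min},\gamma_{\rm max}$ and $\beta^{\mathrm{MS}}$, and I expect this to be the main (if only mildly painful) obstacle. The decisive points are: (i) measuring velocity errors in $\|\cdot\|_{\gamma}$, where $a^{\gamma}$ has continuity and coercivity constants $1$, and converting to $\hdinorm{\cdot}{\Omega;a}$ only at the very end; (ii) estimating the corrector $\boldsymbol{z}$ through $\lnorm{\di(\hat{\boldsymbol{u}}-\tilde{\boldsymbol{v}})}{\Omega}$ rather than the full $H(\operatorname{div})$-norm, which saves a factor $\gamma_{\rm min}^{-1/2}$ in the $\beta^{\mathrm{MS}}$-dependent terms; (iii) using the identity $\gamma=\gamma_{\rm min}\gamma_{\rm max}$ to simplify products such as $\gamma_{\rm max}^{1/2}\gamma^{-1/2}=\gamma_{\rm min}^{-1/2}$; and (iv) using the harmless bounds $\gamma_{\rm min}\le1\le\gamma_{\rm max}$ and $\beta^{\mathrm{MS}}\le1$ (the latter because $\beta(\omega_{i}^{0})\le1$ and $\beta^{\operatorname{RT}}\le1$) to absorb the remaining lower-order contributions into the stated bounds.
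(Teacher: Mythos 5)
Your proposal is correct and follows essentially the same route as the paper, whose proof simply invokes \cref{IS_MS} together with the abstract Babuska--Brezzi results \cite[Theorems 4.2.3, 5.2.2]{boffi2013mixed} and the coercivity/continuity bounds $a^{\gamma}(\boldsymbol{u},\boldsymbol{u})\geq \gamma_{\rm min}\hdinorm{\boldsymbol{u}}{\Omega;a}^{2}$, $a^{\gamma}(\boldsymbol{u},\boldsymbol{v})\leq \gamma_{\rm max}\hdinorm{\boldsymbol{u}}{\Omega;a}\hdinorm{\boldsymbol{v}}{\Omega;a}$. You merely unpack that abstract theorem (shift by the particular functions, Galerkin orthogonality, kernel corrector, energy-norm bookkeeping), and your constant tracking lands within the stated bounds, so no gap.
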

		\begin{proof}
             Use \cref{IS_MS}, \cite[Theorems 4.2.3, 5.2.2]{boffi2013mixed}, and the fact that
             \begin{equation*}
              a^{\gamma}(\boldsymbol{u}, \boldsymbol{u}) \geq \gamma_{\rm min} \hdinorm{\boldsymbol{u}}{\Omega;\,a}^{2},\quad a^{\gamma}(\boldsymbol{u}, \boldsymbol{v}) \leq \gamma_{\rm max} \hdinorm{\boldsymbol{u}}{\Omega;\,a} \hdinorm{\boldsymbol{v}}{\Omega;\,a}.
             \end{equation*} 
		\end{proof}

While \cref{quasi-optimality} implies that we should choose a modest parameter $\gamma>0$, numerical experiments in section \ref{sec:num} show that our method works well with very small $\gamma$, even with $\gamma = 0$. Moreover, the error for the velocity in the weight $L^{2}$-norm decreases for smaller $\gamma$. Unfortunately, we are not able to prove a uniform discrete coerciveness condition for our method in the case $\gamma = 0$.
  
 
%
\section{Fully discrete mixed MS-GFEM}
\label{sec:discMSGFEM}
The mixed MS-GFEM in the previous section was developed in the continuous setting as a multiscale discretization scheme. In this section, nevertheless, we will present the method in a FE discrete setting for approximating solutions to discretized mixed problems. Similar local problems will be defined which can be solved directly without a second-level discretization. 

We assume that $\Omega$ is a Lipschitz polyhedral domain in $\mathbb{R}^{2}$ or $\mathbb{R}^{3}$. Let $\{\mathcal{T}_h\}_h$ be a shape-regular family of simplicial meshes on $\Omega$. The mesh size $h:=\max_{K\in\mathcal{T}_h}h_K$ with $h_K:=\operatorname{diam}(K)$ is assumed to be small enough to resolve the fine-scale features of the coefficient $A$. Let $\boldsymbol{V}_h \subset \hdi{}{}{\Omega}$ be the zeroth order Raviart-Thomas space on $\mathcal{T}_{h}$ and define $\boldsymbol{V}_{h,0}:= \boldsymbol{V}_h \cap \hdi{0}{}{\Omega}$. Moreover, we denote by $Q_h$ the space of piecewise constant functions and set $Q_{h,0}:=Q_h\cap L_0^2(\Omega)$. For simplicity, we assume that $g^N=0$. The standard mixed finite element approximation for (\ref{dualWeakCont}) is as follows: Find $\boldsymbol{u}_h^{\mathrm{e}} \in \boldsymbol{V}_{h,0}$ and $p_h^{\mathrm{e}} \in Q_{h,0}$ such that
\begin{equation}
	\label{neuWeakDisc}
	\left\{\begin{array}{lll}
		a(\boldsymbol{u}_h^{\mathrm{e}},{\boldsymbol{v}_h}) + b({\boldsymbol{v}_h},p_h^{\mathrm{e}}) &= 0 &\forall {\boldsymbol{v}_h}\in \boldsymbol{V}_{h,0}, \\
		b(\boldsymbol{u}_h^{\mathrm{e}},q_h) &= -(f,q_h)_{L^2(\Omega)}   & \forall q_h\in Q_h. \\
	\end{array}\right.
\end{equation}

Next we describe the mixed MS-GFEM in this setting. As in  section \ref{sec:cont}, we define the partition, the overlapping and oversampling domains $\{\omega_i^0\}, \{\omega_i\}$ and $\{\omega_i^*\}$, and assume that they are resolved by the mesh $\mathcal{T}_{h}$. For any subdomain $\omega\subset\Omega$, we define 
\begin{align*}
	\begin{split}
		&\boldsymbol{V}_h(\omega) = \{\boldsymbol{v}_h|_{\omega} \medspace : \medspace \boldsymbol{v}_h\in \boldsymbol{V}_h\},\\
           &Q_h(\omega) = \{q_h|_{\omega} \medspace : \medspace q_h\in Q_h\}, \quad  Q_{h,0}(\omega)=Q_h(\omega)\cap L_0^2(\omega),\\
		&\boldsymbol{V}_{h, N}\left(\omega\right)=\left\{\boldsymbol{v}_h \in \boldsymbol{V}_{h}\left(\omega\right): \boldsymbol{v}_h\cdot \boldsymbol{\nu}=0 \text { on } \partial \omega \cap \partial\Omega\right\}, \\
		&\boldsymbol{V}_{h, 0}\left(\omega\right)=\left\{\boldsymbol{v}_h \in \boldsymbol{V}_{h}\left(\omega\right): \boldsymbol{v}_h\cdot \boldsymbol{\nu} = 0 \text{ on }\partial\omega\right\}, \\
		& \boldsymbol{K}_h(\omega) = \{\boldsymbol{v}_h\in \boldsymbol{V}_h(\omega) \medspace : \medspace  b_{\omega}(\boldsymbol{v}_h,q_h) = 0\; \;\,\forall q_h\in Q_h\}, \\
		&\boldsymbol{K}_{h, N}\left(\omega\right)=\left\{\boldsymbol{v}_h \in \boldsymbol{K}_{h}\left(\omega\right): \boldsymbol{v}_h\cdot \boldsymbol{\nu}=0 \text { on } \partial \omega \cap \partial\Omega\right\}, \\
		&\boldsymbol{K}_{h, 0}\left(\omega\right)=\left\{\boldsymbol{v}_h \in \boldsymbol{K}_{h}\left(\omega\right): \boldsymbol{v}_h\cdot \boldsymbol{\nu} = 0 \text{ on }\partial\omega\right\}, \\
		&\boldsymbol{W}_{h}\left(\omega\right)=\left\{\boldsymbol{u}_h \in \boldsymbol{K}_{h, N}(\omega): a_{\omega}(\boldsymbol{u}_h,\boldsymbol{v}_h)=0\;\;\; \forall \boldsymbol{v}_h \in \boldsymbol{K}_{h, 0}\left(\omega\right)\right\}.
	\end{split}
\end{align*}
In particular, we note that functions in $\boldsymbol{K}_h(\omega)$ are divergence free, i.e., $\boldsymbol{K}_h(\omega) = \boldsymbol{V}_h(\omega)\cap \hdi{}{0}{\omega}$. The space $\boldsymbol{W}_{h}\left(\omega\right)$ is the discrete analogue of the generalized $a$-harmonic space $\boldsymbol{H}_a(\omega)$.

\subsection{Mixed GFEM}
As in the continuous setting, we consider a modified discrete problem: For $\gamma>0$, find $\boldsymbol{u}^{\mathrm{e}}_h \in \boldsymbol{V}_{h,0}$ and $p^{\mathrm{e}}_h\in Q_{h,0}$ such that
\begin{equation}
	\label{globWeakAug_disc}
	\left\{\begin{array}{lll}
		a^\gamma(\boldsymbol{u}^{\mathrm{e}}_h,\boldsymbol{v}_h) + b(\boldsymbol{v}_h,p^{\mathrm{e}}_h) &= -\gamma b(\boldsymbol{v}_h,f)
		&\forall \boldsymbol{v}_h\in \boldsymbol{V}_{h,0}, \\
		b(\boldsymbol{u}^{\mathrm{e}}_h,q_h) &= -(f,q_h)_{L^2(\Omega)}  & \forall q_h\in Q_h. \\
	\end{array}\right.
\end{equation}
With the global ellipticity of the bilinear form $a^{\gamma}$ and the inf-sup stability of the pair $\boldsymbol{V}_{h,0}/Q_{h,0}$, problem (\ref{globWeakAug_disc}) is well-posed with the same solution as (\ref{neuWeakDisc}).

Given local approximation spaces $\boldsymbol{S}^{\mathrm{v}}_{h,n_i}(\omega_i)\subset \boldsymbol{V}_h(\omega_i)$, $S^{\mathrm{p}}_{h,n_i}(\omega_i^0)\subset  Q_{h,0}(\omega^{0}_i)$, and local velocity enrichment spaces $\boldsymbol{V}_{h,n_i}^{\operatorname{en}}(\omega_i^0)\subset \boldsymbol{V}_{h, 0}(\omega^{0}_i)$, we define the global spaces similarly as before by
\begin{align}
	\label{def_glob_spaces_disc}
	&\boldsymbol{S}^{\mathrm{v}}_{h,n}(\Omega) := \Big\{ \sum_{i=1}^M \Pi_h(\chi_i \boldsymbol{v}_h^i) \medspace|\medspace \boldsymbol{v}_h^i\in \boldsymbol{S}^{\mathrm{v}}_{h,n_i}(\omega_i) \Big\}	
	,\nonumber\\
 &S_{h,n}^{\mathrm{p}}(\Omega) := \Big\{ \sum_{i=1}^M {p_h^i} \medspace|\medspace {p_h^i}\in S^{\mathrm{p}}_{h,n_i}(\omega_i^0) \Big\},\\
	&\boldsymbol{V}_{h,n}^{\operatorname{en}}(\Omega) := \Big\{ \sum_{i=1}^M  \boldsymbol{u}_h^i \medspace|\medspace \boldsymbol{u}_h^i\in \boldsymbol{V}_{h,n_i}^{\operatorname{en}}(\omega_i^0) \Big\},\nonumber
\end{align}
where  $\Pi_h$ is the Raviart--Thomas interpolation operator on $\Omega$ \ma{(see \cite[Example 2.5.3]{boffi2013mixed} for the definition)}.
Moreover, given the local particular functions $\boldsymbol{u}_{h,i}^{\mathrm{par}}$ and $p_{h,i}^{\mathrm{par}}$, we define the global particular functions by
\begin{align}
	\label{def_glob_par_disc}
	\boldsymbol{u}_h^{\mathrm{par}} := \sum\limits_{i=1}^M \Pi_h(\chi_i \boldsymbol{u}_{h,i}^{\mathrm{par}}), \quad p_h^{\mathrm{par}} := \sum\limits_{i=1}^M p_{h,i}^{\mathrm{par}}.
\end{align}
Let $\boldsymbol{V}^{\mathrm{RT}}_0/Q^{\mathrm{RT}}_0$ be the Raviart--Thomas pair with respect to the partition $\{\omega_i^{0}\}$ as \ma{in section \ref{sec:cont}}. We define the multiscale approximation spaces by
\begin{align}
	\label{def:discreteMSspaces}
	\boldsymbol{V}_h^{\mathrm{MS}} 
	:= \boldsymbol{S}^{\mathrm{v}}_{h,n}(\Omega)	
	+ \boldsymbol{V}_{h,n}^{\operatorname{en}}(\Omega)
	+ \boldsymbol{V}^{\mathrm{RT}}_0
	\quad\text{and}\quad 
	Q_h^{\mathrm{MS}} 
	:= S^{\mathrm{p}}_{h,n}(\Omega)	
	+ Q^{\mathrm{RT}}_0.    
\end{align}
The GFEM approximation of problem (\ref{globWeakAug_disc}) with the above multiscale spaces is defined by: 
Find $\boldsymbol{u}_h^{\mathrm{G}} \in \boldsymbol{u}_h^{\mathrm{par}}+\boldsymbol{V}_h^{\mathrm{MS}}$ and $p_h^{\mathrm{G}}\in p_h^{\mathrm{par}}+Q_h^{\mathrm{MS}}$ such that
\begin{equation}
	\label{GFEM_disc}
	\left\{\begin{array}{lll}
		a^\gamma(\boldsymbol{u}_h^{\mathrm{G}},\boldsymbol{v}_h) + b(\boldsymbol{v}_h,p_h^{\mathrm{G}}) &= -\gamma b(\boldsymbol{v}_h,f) 
		&\forall \boldsymbol{v}_h\in \boldsymbol{V}_h^{\mathrm{MS}}, \\[2mm]
		b(\boldsymbol{u}_h^{\mathrm{G}},q_h) &= -(f,q_h)_{L^2(\Omega)}  & \forall q_h\in Q_h^{\mathrm{MS}}. \\
	\end{array}\right.
\end{equation}


\subsection{Local approximations} 
In this subsection, we construct the local particular functions and local approximation spaces in the same spirit as the continuous mixed MS-GFEM.

\subsubsection{Local particular functions}
\label{subsec_locPart_disc}
We consider the following local problems similar to (\ref{partWeakNeu}): Find ${{\boldsymbol{\psi}}_{h,i}} \in \boldsymbol{V}_{h}(\omega_i^*)$ and $ \phi_{h,i} \in Q_{h,0}(\omega_i^*)$ such
that 
\begin{equation}
	\label{partWeakDisc}
	\left\{\begin{array}{lll}
		a_{\omega_i^*}({\boldsymbol{\psi}}_{h,i},{\boldsymbol{v}_h}) + b_{\omega_i^*}({\boldsymbol{v}_h},\phi_{h,i}) &= 0 &\forall {\boldsymbol{v}_h}\in \boldsymbol{V}_{h,0}(\omega_i^*), \\
		b_{\omega_i^*}({\boldsymbol{\psi}}_{h,i},q_h) &= -(f,q_h)_{L^2(\omega_i^*)}   & \forall q_h\in Q_h(\omega_i^*), \\
		{\boldsymbol{\psi}}_{h,i}\cdot \boldsymbol{\nu} &= 0, & \text{on } \partial\omega_i^*\cap\partial\Omega,\\
		{\boldsymbol{\psi}}_{h,i}\cdot \boldsymbol{\nu} &= C_{comp}, & \text{on } \partial\omega_i^*\cap\Omega,\\
	\end{array}\right.
\end{equation}
where the constant $C_ {comp}$ is chosen such that the compatibility condition is satisfied. 
Then, we define the local particular functions by
\begin{equation}
	\label{def_loc_part_disc}
	\boldsymbol{u}_{h,i}^{\mathrm{par}} := {\boldsymbol{\psi}}_{h,i}|_{\omega_i} \quad \text{ and }\quad p^{\mathrm{par}}_{h,i} :=
	\phi_{h,i}|_{\omega^0_i} - \dashint_{\omega_i^0}\phi_{h,i}\dx.
\end{equation}
Combining (\ref{neuWeakDisc}) and (\ref{partWeakDisc}), we see that $\boldsymbol{u}_h^{\mathrm{e}}|_{\omega_i^*} - {\boldsymbol{\psi}}_{h,i} \in \boldsymbol{W}_h(\omega_i^*)$, where $\boldsymbol{W}_h(\omega_i^*)$ is the discrete generalized $a$-harmonic space.

\subsubsection{Local approximation spaces}
\label{velEigProbDisc}
We consider the operator 
\begin{align}
	\label{defPh}
	P_h : (\boldsymbol{W}_h(\omega_i^*), \hdinorm{\cdot}{\omega_i^*;a}) 
	\longrightarrow (\boldsymbol{W}_{h}(\omega_i), \hdinorm{\cdot}{\omega_i;a})
	,\quad \boldsymbol{v}_h\mapsto \boldsymbol{v}_h|_{\omega_i},
\end{align}
and define the associated Kolmogorov $n$-width $d_{h,n}(\omega_i,\omega_i^*)$ similarly to (\ref{n-width}).
Clearly, $P_h$ is compact and we have the following result.
\begin{proposition}\label{local-estimate-discrete}
	Let the local approximation space for the velocity on $\omega_i$ be defined as
	\begin{equation}
		\label{def_loc_vel_disc}
		\boldsymbol{S}^{\mathrm{v}}_{h,n_i}(\omega_i) := {\rm span}\,\{\boldsymbol{v}_{h,1}|_{\omega_i}, \cdots, \boldsymbol{v}_{h,n_i}|_{\omega_i} \},
	\end{equation}
	where $\boldsymbol{v}_{h,k}$ denotes the $k$-th eigenfunction of the eigenproblem 
	\begin{equation}
		\label{EVP1_disc}
		a_{\omega_i^*}(\boldsymbol{v}_{h,k},{\boldsymbol{\phi}_h}) 
		= \lambda a_{\omega_i}(\boldsymbol{v}_{h,k}, {\boldsymbol{\phi}_h}) 
		\quad \forall {\boldsymbol{\phi}_h}\in \boldsymbol{W}_h(\omega_i^*),
	\end{equation}
and let $\boldsymbol{u}_h^{\mathrm{e}}$ be as in (\ref{neuWeakDisc}). Then, 
	\begin{equation}
		\inf_{\boldsymbol{v}_h\in \boldsymbol{u}_{h,i}^{\mathrm{par}} + \boldsymbol{S}_{h,n_i}^{\mathrm{v}}(\omega_i)} \hdinorm{\boldsymbol{u}_h^{\mathrm{e}}-\boldsymbol{v}_h}{\omega_i;a} 
		\leq d_{h,n_i}(\omega_i,\omega_i^*) \hdinorm{\boldsymbol{u}_h^{\mathrm{e}}-{\boldsymbol{\psi}}_{h,i}}{\omega_i^*;a}.
	\end{equation}
\end{proposition}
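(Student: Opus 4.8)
The plan is to reproduce, at the discrete level, the argument that gave \cref{nwidth_char} and estimate (\ref{local_estimate}) in the continuous setting. Two ingredients are needed: first, a discrete analogue of \cref{nwidth_char} identifying the Kolmogorov $n$-width $d_{h,n_i}(\omega_i,\omega_i^*)$ of $P_h$ with $\lambda_{h,n_i+1}^{-1/2}$ and exhibiting $\boldsymbol{S}^{\mathrm{v}}_{h,n_i}(\omega_i)$ as an associated optimal approximation space; second, the observation — already recorded after (\ref{def_loc_part_disc}) — that $\boldsymbol{u}_h^{\mathrm{e}}|_{\omega_i^*} - \boldsymbol{\psi}_{h,i}$ belongs to the discrete generalized $a$-harmonic space $\boldsymbol{W}_h(\omega_i^*)$.

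For the first ingredient, I would note that $P_h$ is trivially compact, being a linear map between finite-dimensional spaces, and that on $\boldsymbol{W}_h(\omega_i^*)$ (resp. $\boldsymbol{W}_h(\omega_i)$) the bilinear form $a_{\omega_i^*}(\cdot,\cdot)$ (resp. $a_{\omega_i}(\cdot,\cdot)$) coincides with the inner product $(\cdot,\cdot)_{\hdi{}{}{\omega_i^*;a}}$ (resp. $(\cdot,\cdot)_{\hdi{}{}{\omega_i;a}}$), since functions in $\boldsymbol{K}_h(\omega) = \boldsymbol{V}_h(\omega)\cap\hdi{}{0}{\omega}$ are divergence-free and the divergence contributions to these inner products vanish. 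Consequently, (\ref{EVP1_disc}) is precisely the eigenvalue problem governing the singular value decomposition of $P_h$ with respect to these Hilbert space structures, and \cite[Theorem 2.2]{pinkus2012n} applies verbatim, exactly as in the proof of \cref{nwidth_char}. The only point of care, handled by the same convention as in the continuous case, is the treatment of any kernel of $P_h$: if the restriction map on $\boldsymbol{W}_h(\omega_i^*)$ fails to be injective one passes to the orthogonal complement of that kernel before invoking the SVD, which changes nothing in the statement.

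Given this, the conclusion is immediate. Set $\boldsymbol{w}_h := \boldsymbol{u}_h^{\mathrm{e}}|_{\omega_i^*} - \boldsymbol{\psi}_{h,i} \in \boldsymbol{W}_h(\omega_i^*)$. By optimality of $\boldsymbol{S}^{\mathrm{v}}_{h,n_i}(\omega_i)$ there is $\boldsymbol{v}_h \in \boldsymbol{S}^{\mathrm{v}}_{h,n_i}(\omega_i)$ with $\hdinorm{P_h\boldsymbol{w}_h - \boldsymbol{v}_h}{\omega_i;a} \le d_{h,n_i}(\omega_i,\omega_i^*)\,\hdinorm{\boldsymbol{w}_h}{\omega_i^*;a}$, and since $P_h\boldsymbol{w}_h = \boldsymbol{u}_h^{\mathrm{e}}|_{\omega_i} - \boldsymbol{u}_{h,i}^{\mathrm{par}}$ by (\ref{def_loc_part_disc}), the admissible element $\boldsymbol{u}_{h,i}^{\mathrm{par}} + \boldsymbol{v}_h$ realizes the bound asserted for the infimum. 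I do not expect a genuine obstacle in this proposition — it is essentially a repackaging of the $n$-width machinery inside the finite element space — and all the substantive analytic work of this section, namely the discrete Caccioppoli inequality and the resulting exponential decay of $d_{h,n_i}(\omega_i,\omega_i^*)$, is deferred to \cref{sec:proofs} together with the required interpolation estimates for N\'ed\'elec elements.
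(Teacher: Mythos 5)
Your proposal is correct and follows essentially the same route as the paper: the discrete $n$-width is characterized via the singular value decomposition of the (trivially compact) restriction operator $P_h$, using that $a_{\omega_i^*}(\cdot,\cdot)$ coincides with the $\hdi{}{}{\omega_i^*;a}$ inner product on the divergence-free space $\boldsymbol{W}_h(\omega_i^*)$, and the estimate then follows from $\boldsymbol{u}_h^{\mathrm{e}}|_{\omega_i^*}-\boldsymbol{\psi}_{h,i}\in \boldsymbol{W}_h(\omega_i^*)$ together with the definition of $d_{h,n_i}(\omega_i,\omega_i^*)$, exactly as in the continuous case. No gaps.
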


Next, we use the space $\boldsymbol{S}^{\mathrm{v}}_{h,n_i}(\omega_i)$ to construct the local pressure approximation space. Let $\{\boldsymbol{v}_{h,k}\}$ be as in \cref{local-estimate-discrete}. For each $k=1,\cdots,n_i$, we define $(\tilde{\boldsymbol{v}}_{h,k},p_{h,k})\in \boldsymbol{V}_h({\omega_i^0})\times Q_{h,0}(\omega_i^0)$ to be the solution of
\begin{equation}
	\label{eq_pressure_recon_disc}
	\left\{\begin{array}{lll}
		a_{\omega_i^0}(\tilde{\boldsymbol{v}}_{h,k},\boldsymbol{w}_h) + b_{\omega_i^0}(\boldsymbol{w}_h,p_{h,k}) &= 0 &\forall \boldsymbol{w}_h\in \boldsymbol{V}_{h,0}(\omega_i^0), \\
		b_{\omega_i^0}(\tilde{\boldsymbol{v}}_{h,k},q_h) &= 0  &\forall q_h\in Q_{h}(\omega_i^0), \\
		\tilde{\boldsymbol{v}}_{h,k}\cdot\boldsymbol{\nu} &= \boldsymbol{v}_{h,k}\cdot\boldsymbol{\nu} &\text{on } \partial\omega_i^0.
	\end{array}\right.
\end{equation}
The local pressure approximation space is defined by
\begin{equation}
	\label{def_loc_pres_disc}
	S^{\mathrm{p}}_{h,n_i}(\omega_i^0):=\text{span}\{ p_{h,1}, \cdots, p_{h,n_i}\}.
\end{equation}

The following theorem shows that the $n$-width $d_{h,n}(\omega_i,\omega_i^*)$ has a similar exponential decay rate as its continuous counterpart. The proof is given in section \ref{sec:proofs_nearlyExponentialDisc}.
\begin{theorem}
	\label{theorem_nearly_exponential_disc}
	For every $\epsilon >0$, there exists $n_{\epsilon}$, such that for any $n>n_{\epsilon}$ and $h$ small enough,
	\begin{equation*}
		d_{h,n}(\omega_i,\omega_i^*) 
		\leq e^{-n^{\left(\frac{1}{d+1}-\epsilon\right)}}.
	\end{equation*} 
\end{theorem}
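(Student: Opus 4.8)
The plan is to transplant the continuous argument behind \cref{theorem_nearly_exponential} to the discrete level: reduce the estimate on the $n$-width of $P_h$ to that of an auxiliary restriction operator acting on discrete vector potentials (in $3$D) or discrete stream functions (in $2$D), prove a \emph{discrete} Caccioppoli inequality and a weak approximation estimate for this auxiliary operator, and feed the two ingredients into the nested-subdomain iteration of \cite{babuska2011optimal,ma2022novel,ma2023unified}, checking throughout that all constants and thresholds are independent of $h$ once $h$ is small enough to resolve the oversampling domains.

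\emph{Lifting to potentials.} Functions in $\boldsymbol{W}_h(\omega_i^*)\subset\boldsymbol{V}_h(\omega_i^*)$ are genuinely divergence free and have vanishing normal trace on $\partial\omega_i^*\cap\partial\Omega$, so the discrete de Rham complex on $\mathcal{T}_h$ restricted to $\omega_i^*$ provides, in $3$D, a first-kind N\'ed\'elec potential $\boldsymbol{\tilde v}_h$ with $\cu\boldsymbol{\tilde v}_h=\boldsymbol{v}_h$ and the appropriate boundary conditions, while a discrete regular decomposition (discrete Friedrichs inequality) yields an $h$-uniform bound $\hnorm{\boldsymbol{\tilde v}_h}{\omega_i^*}\leq C\lnorm{\boldsymbol{v}_h}{\omega_i^*}$ with $C$ depending only on $\omega_i^*$; in $2$D, $\boldsymbol{\tilde v}_h$ is a continuous piecewise-linear stream function. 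Hence $\boldsymbol{W}_h(\omega_i^*)$ is isomorphic, with $h$-uniform norm equivalence, to a space of discrete potentials that are discretely harmonic in the corresponding sense, and estimating $d_{h,n}(\omega_i,\omega_i^*)$ reduces to estimating the $n$-width of the restriction of these potentials from $\omega_i^*$ to $\omega_i$.

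\emph{Discrete Caccioppoli inequality (the crux).} Fix a Lipschitz cutoff $\eta$ vanishing on $\Omega\cap\partial\omega^*$ and equal to $1$ on a smaller subdomain. The obstruction, absent in the continuous setting, is that $\eta\boldsymbol{\tilde v}_h$ does not lie in the N\'ed\'elec space, so one works with its N\'ed\'elec interpolant $I_h^{\mathrm N}(\eta\boldsymbol{\tilde v}_h)$. Using the commuting property $\cu\,I_h^{\mathrm N}=\Pi_h\,\cu$, the product rule, and discrete harmonicity tested against $\cu\,I_h^{\mathrm N}(\eta^2\boldsymbol{\tilde v}_h)$ (an admissible element of $\boldsymbol{K}_{h,0}(\omega^*)$ by the boundary behaviour of $\eta$ and of the chosen potentials), one arrives at $\lnorm{\boldsymbol{v}_h}{\omega;a}\leq \norm{\nabla\eta}{L^\infty(\omega^*)}\lnorm{\boldsymbol{\tilde v}_h}{\omega^*;a}+\mathcal{E}_h$, where $\mathcal{E}_h$ collects the N\'ed\'elec/RT interpolation errors of $\eta\boldsymbol{\tilde v}_h$ and $\eta^2\boldsymbol{\tilde v}_h$. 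Since $\boldsymbol{\tilde v}_h$ is elementwise polynomial, $\mathcal{E}_h$ is controlled by combining standard interpolation estimates with inverse inequalities on $\mathcal{T}_h$, which produces a factor of order $h/\operatorname{dist}(\omega,\partial\omega^*)$ that is absorbed for $h$ small, leaving an $h$-uniform discrete Caccioppoli inequality (the $H(\di)$-seminorm adds nothing since $\boldsymbol{v}_h$ is divergence free). \textbf{This is the main difficulty}: controlling $\mathcal{E}_h$ uniformly as $h\to 0$, and in particular arranging that the inverse-inequality constants interact correctly with the cutoff scale, is precisely the careful analysis of N\'ed\'elec FEs referred to in the introduction.

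\emph{Assembly.} With the discrete Caccioppoli inequality in hand, a weak approximation estimate on intermediate subdomains follows from the compact embedding $H^1\hookrightarrow L^2$ applied to the $H^1$-bounded potentials (one fixed-accuracy finite-dimensional approximation step), and the standard iteration of \cite{ma2022novel,ma2023unified} over $\mathcal{O}(m)$ layers filling the collar between $\omega_i$ and $\omega_i^*$ then yields $d_{h,n}(\omega_i,\omega_i^*)\leq e^{-n^{1/(d+1)-\epsilon}}$ for $n>n_\epsilon$ and $h$ small, with $n_\epsilon$ and all constants independent of $h$. The two-dimensional case is lighter: by the discrete analogue of \eqref{stream-functions} the problem collapses exactly to the scalar discrete $n$-width in $H_{A^{-1}}$, which is handled by the scalar arguments of \cite{babuska2011optimal,ma2022error}.
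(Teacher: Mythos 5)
Your overall strategy coincides with the paper's: lift the discrete $a$-harmonic fields to vector potentials (stream functions in 2D), prove a discrete Caccioppoli inequality by working with N\'ed\'elec interpolants, the commuting property $\cu\,\Sigma_h=\Pi_h\,\cu$, superapproximation and inverse estimates, combine this with a weak $L^2$ approximation estimate, and iterate over nested subdomains, reducing the 2D case to the scalar discrete problem via discrete stream functions. This matches the roles of \cref{superapproximation}, \cref{CaccioDisc}, \cref{l2_approx_vecpotDisc}, \cref{lemma_induction_startDisc} and \cref{lem:discrete_stream_function} in the paper.

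However, there is a genuine gap in your lifting step. You take the potential directly from the discrete de Rham complex, i.e.\ a N\'ed\'elec function $\boldsymbol{\tilde v}_h$ with $\cu\boldsymbol{\tilde v}_h=\boldsymbol{v}_h$, and assert an $h$-uniform bound $\hnorm{\boldsymbol{\tilde v}_h}{\omega_i^*}\leq C\lnorm{\boldsymbol{v}_h}{\omega_i^*}$ via a ``discrete regular decomposition (discrete Friedrichs inequality)''. A lowest-order N\'ed\'elec field is only tangentially continuous and hence not in $(H^1(\omega_i^*))^3$, so the asserted bound is not even well formed; the discrete Friedrichs inequality yields only an $L^2$ bound on $\boldsymbol{\tilde v}_h$ in terms of its curl, and a broken $H^1$ bound would not feed into the compact embedding $H^1\hookrightarrow L^2$ that you invoke for the weak approximation estimate. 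Since that estimate is precisely where the $h$-uniform $H^1$ control of the potential is consumed, the argument as written does not close. The paper circumvents this by lifting with the \emph{continuous} operator of \cref{exVecPotnew}, obtaining a genuinely $H^1$ potential $\boldsymbol{\tilde u}=\mathcal{S}_{N}\boldsymbol{u}_h$ with $|\boldsymbol{\tilde u}|_{H^1(\omega^*)}\leq C\lnorm{\boldsymbol{u}_h}{\omega^*}$, and only then passing to the N\'ed\'elec interpolant $\Sigma_h\boldsymbol{\tilde u}$ (which leaves the curl unchanged by (\ref{commuting-property})) for the Caccioppoli and inverse-estimate part; the weak approximation estimate is then proved for the interpolants in \cref{l2_approx_vecpotDisc} by combining the $H^1$-based approximation of $\boldsymbol{\tilde u}$ with the $L^2$ interpolation error bound $\lnorm{\Sigma_h\boldsymbol{\tilde u}-\boldsymbol{\tilde u}}{\omega^*}\leq Ch\,|\boldsymbol{\tilde u}|_{H^1(\omega^*)}$. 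If you replace your discrete-complex lifting by this continuous-lift-plus-interpolation device (or make the $H^1$-regular part of a discrete regular decomposition explicit and carry its gradient correction and remainder through both the Caccioppoli and the approximation steps), the remainder of your sketch goes through as in the paper.
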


\subsection{Inf-sup stability}
\ma{As in the continuous case}, we enrich the multiscale space for the velocity to obtain inf-sup stability. Recall (\ref{def_loc_pres_disc}) and for each $p_{h,k}$, we denote by $(\boldsymbol{u}_{h,k},\tilde{p}_{h,k})\in \boldsymbol{V}_{h,0}(\omega_i^0)\times Q_{h,0}(\omega_i^0)$ the unique solution of
\begin{equation}
	\label{enrProb_disc}
	\left\{\begin{array}{lll}
		a_{\omega_i^0}(\boldsymbol{u}_{h,k},\boldsymbol{v}_h) +b_{\omega_i^0}(\boldsymbol{v}_h,\tilde{p}_{h,k}) &= 0 &\forall \boldsymbol{v}_h\in \boldsymbol{V}_{h,0}(\omega_i^0), \\
		b_{\omega_i^0}(\boldsymbol{u}_{h,k},q_h) &= (p_{h,k},q_h)_{L^2(\omega_i^0)}  &\forall q_h\in Q_{h}(\omega_i^0), \\
		{\boldsymbol{u}_{h,k}}\cdot\boldsymbol{\nu} &= 0 &\text{ on } \partial\omega_i^0.
	\end{array}\right.
\end{equation}
We define the local velocity enrichment space by
\begin{equation}
	\label{def_loc_enr_disc}
	\boldsymbol{V}_{h,n_i}^{\operatorname{en}}(\omega_i^0) := \text{span}\{\boldsymbol{u}_{h,1},\cdots,\boldsymbol{u}_{h,n_i} \}. 
\end{equation}
The following inf-sup stability can be proved analogously to \cref{IS_MS}.
\begin{theorem}
	\label{IS_MS_disc}
	The pair $\boldsymbol{V}_h^{\mathrm{MS}}/Q_h^{\mathrm{MS}}$ is inf-sup stable, i.e.
	\begin{equation*}
		\sup_{\boldsymbol{u}_h^{\mathrm{MS}}\in \boldsymbol{V}_h^{\mathrm{MS}}}\frac{b(\boldsymbol{u}_h^{\mathrm{MS}},p_h^{\mathrm{MS}})}{\hdinorm{\boldsymbol{u}_h^{\mathrm{MS}}}{\Omega;a}\lnorm{p_h^{\mathrm{MS}}}{\Omega}}
		\geq \beta_h^{\mathrm{MS}}:= \ma{\Big(\frac{2}{\beta_{h,\mathrm{min}}}+\frac{1}{\beta^{\operatorname{RT}}}\Big)^{-1}}, \quad \forall p_h^{\mathrm{MS}}\in Q_h^{\mathrm{MS}},
	\end{equation*}
 where ${\beta_{h,\mathrm{min}}} := \min\limits_{i=1,\cdots,M} \beta_h(\omega_i^0)$, with $\beta_h(\omega_i^0)$ denoting the inf-sup constant for the pair $\boldsymbol{V}_{h,0}(\omega_i^0)/Q_{h,0}(\omega_i^0)$ \ma{equipped with the norms $\hdinorm{\cdot}{\omega_{i}^0;a}/\Vert\cdot\Vert_{L^{2}(\omega_{i}^{0})}$}, and $\beta^{\operatorname{RT}}$ is as in \cref{IS_MS}.
\end{theorem}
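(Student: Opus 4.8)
The plan is to transcribe the proof of \cref{IS_MS} into the finite element setting, the only genuinely new ingredient being to keep careful track of which discrete pressure space each surjectivity identity is valid against. I would first establish the discrete counterpart of \cref{IS_approx_loc}: given $p_h\in S^{\mathrm{p}}_{h,n_i}(\omega_i^0)$, write $p_h=\sum_k\mu_k p_{h,k}$ and set $\boldsymbol{u}_h=\sum_k\mu_k\boldsymbol{u}_{h,k}\in\boldsymbol{V}_{h,n_i}^{\operatorname{en}}(\omega_i^0)$ with the $\boldsymbol{u}_{h,k}$ from (\ref{enrProb_disc}). By linearity $(\boldsymbol{u}_h,\sum_k\mu_k\tilde p_{h,k})$ solves the local discrete saddle-point problem with right-hand side $(0,(p_h,\cdot)_{L^2(\omega_i^0)})$; in particular $b_{\omega_i^0}(\boldsymbol{u}_h,q_h)=(p_h,q_h)_{L^2(\omega_i^0)}$ for every $q_h\in Q_h(\omega_i^0)$. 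Since $a_{\omega_i^0}(\cdot,\cdot)$ is continuous and coercive with constant $1$ on the divergence-free kernel $\boldsymbol{K}_{h,0}(\omega_i^0)$ (there $\hdinorm{\cdot}{\omega_i^0;a}$ reduces to the weighted $L^2$-norm) and the pair $\boldsymbol{V}_{h,0}(\omega_i^0)/Q_{h,0}(\omega_i^0)$ is inf-sup stable with constant $\beta_h(\omega_i^0)$, \cite[Theorem 4.2.3]{boffi2013mixed} gives $\hdinorm{\boldsymbol{u}_h}{\omega_i^0;a}\le(2/\beta_h(\omega_i^0))\lnorm{p_h}{\omega_i^0}$.

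Next, mimicking \cref{IS_approx}, for $p_h=\sum_i p_h^i\in S^{\mathrm{p}}_{h,n}(\Omega)$ I would take $\boldsymbol{u}_h=\sum_i\boldsymbol{u}_h^i$ with the $\boldsymbol{u}_h^i$ from the previous step; since the $\omega_i^0$ are pairwise disjoint and cover $\Omega$, the weighted $\boldsymbol{H}(\di)$-norms add in squares, so $\hdinorm{\boldsymbol{u}_h}{\Omega;a}\le(2/\beta_{h,\mathrm{min}})\lnorm{p_h}{\Omega}$ and $b(\boldsymbol{u}_h,q_h)=(p_h,q_h)_{L^2(\Omega)}$ for all $q_h\in Q_h$ (each $q_h|_{\omega_i^0}$ lies in $Q_h(\omega_i^0)$). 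Separately, from $\di\boldsymbol{V}_0^{\mathrm{RT}}=Q_0^{\mathrm{RT}}$ and the inf-sup stability of $\boldsymbol{V}_0^{\mathrm{RT}}/Q_0^{\mathrm{RT}}$ in the weighted norms, any $c\in Q_0^{\mathrm{RT}}$ admits $\boldsymbol{u}^{\mathrm{RT}}\in\boldsymbol{V}_0^{\mathrm{RT}}$ with $\di\boldsymbol{u}^{\mathrm{RT}}=c$, so that $b(\boldsymbol{u}^{\mathrm{RT}},q_h)=(c,q_h)_{L^2(\Omega)}$ for all $q_h\in Q_h$ and $\hdinorm{\boldsymbol{u}^{\mathrm{RT}}}{\Omega;a}\le(1/\beta^{\operatorname{RT}})\lnorm{c}{\Omega}$.

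Finally, exactly as in the proof of \cref{IS_MS}, write $p_h^{\mathrm{MS}}\in Q_h^{\mathrm{MS}}$ as $p_h^{\mathrm{MS}}=p_h+c$ with $p_h\in S^{\mathrm{p}}_{h,n}(\Omega)$ and $c\in Q_0^{\mathrm{RT}}$; these two spaces are $L^2(\Omega)$-orthogonal, since on each block $\omega_i^0$ the restriction of $p_h$ is the mean-free $p_h^i$ while $c$ is constant. Setting $\boldsymbol{u}_h^{\mathrm{MS}}=\boldsymbol{u}_h+\boldsymbol{u}^{\mathrm{RT}}\in\boldsymbol{V}_h^{\mathrm{MS}}$ and testing the identities above against $q_h=p_h^{\mathrm{MS}}\in Q_h$, the orthogonality yields $b(\boldsymbol{u}_h^{\mathrm{MS}},p_h^{\mathrm{MS}})=\lnorm{p_h}{\Omega}^2+\lnorm{c}{\Omega}^2=\lnorm{p_h^{\mathrm{MS}}}{\Omega}^2$, while the triangle inequality and the two norm bounds give $\hdinorm{\boldsymbol{u}_h^{\mathrm{MS}}}{\Omega;a}\le(2/\beta_{h,\mathrm{min}})\lnorm{p_h}{\Omega}+(1/\beta^{\operatorname{RT}})\lnorm{c}{\Omega}$. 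Combining these with the elementary bound $(s+t)\sqrt{a^2+b^2}\ge sa+tb$ for $s,t,a,b\ge0$ gives
\[
\frac{b(\boldsymbol{u}_h^{\mathrm{MS}},p_h^{\mathrm{MS}})}{\hdinorm{\boldsymbol{u}_h^{\mathrm{MS}}}{\Omega;a}\lnorm{p_h^{\mathrm{MS}}}{\Omega}}\ge\Big(\frac{2}{\beta_{h,\mathrm{min}}}+\frac{1}{\beta^{\operatorname{RT}}}\Big)^{-1}=\beta_h^{\mathrm{MS}}.
\]

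I do not expect a genuine obstacle: the argument is a verbatim copy of the continuous one with $\beta(\omega_i^0)$, $\beta_{\mathrm{min}}$ replaced by $\beta_h(\omega_i^0)$, $\beta_{h,\mathrm{min}}$. The one point requiring care — and the thing I would double-check at every stage — is the bookkeeping of test spaces: each surjectivity identity must hold against all of $Q_h$, so that it may be tested against $p_h^{\mathrm{MS}}\in Q_h^{\mathrm{MS}}\subset Q_h$. This rests on the partition $\{\omega_i^0\}$ being resolved by $\mathcal{T}_h$ (hence $Q_0^{\mathrm{RT}}\subset Q_h$), on the divergence constraint in (\ref{enrProb_disc}) already being imposed against all of $Q_h(\omega_i^0)$, and on $\di\boldsymbol{u}^{\mathrm{RT}}=c$ holding pointwise; one also needs the discrete local problems (\ref{enrProb_disc}) to be well-posed, i.e., the standard inf-sup stability $\beta_h(\omega_i^0)>0$ of the lowest-order Raviart--Thomas/piecewise-constant pair, which is what makes $\boldsymbol{V}_{h,n_i}^{\operatorname{en}}(\omega_i^0)$ well-defined in the first place.
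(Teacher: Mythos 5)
Your proposal is correct and is exactly the argument the paper intends: the paper gives no separate proof of \cref{IS_MS_disc}, stating only that it is proved analogously to \cref{IS_MS}, and your transcription (local stability via (\ref{enrProb_disc}) and \cite[Theorem 4.2.3]{boffi2013mixed}, summation over the disjoint $\omega_i^0$, the Raviart--Thomas lift of the piecewise-constant part, $L^2$-orthogonality of $S^{\mathrm{p}}_{h,n}(\Omega)$ and $Q_0^{\operatorname{RT}}$, and the final elementary inequality) is precisely that analogue. Your extra care about the test spaces ($Q_h$ versus $Q_h^{\mathrm{MS}}$) and the well-posedness of the local discrete problems is sound and consistent with the paper's assumptions that the partition is resolved by $\mathcal{T}_h$.
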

\begin{remark}
\ma{Although for clarity they contain an} index $h$, the discrete inf-sup constants are actually independent of $h$ \ma{for a shape-regular family of meshes as assumed}. 
\end{remark}
\subsection{Global approximation  error estimates}
As in the continuous setting, we assume the stability of the discrete problems (\ref{neuWeakDisc}) and (\ref{partWeakDisc}):
\begin{align}
	\label{eq:stability_inequality_disc}
	\lnorm{\boldsymbol{{u}}_h^e}{\Omega;a}
	\leq C_{\operatorname{stab},h} \lnorm{f}{\Omega},\quad \lnorm{\boldsymbol{{\psi}}_{h,i}}{\omega_i^*;a}
	\leq C^i_{\operatorname{stab},h} \lnorm{f}{\omega_i^*} \quad 1\leq i\leq M.
\end{align}
In practice, the constants $C_{\operatorname{stab},h}$ and $C^i_{\operatorname{stab},h}$ do not depend on $h$.  
\begin{theorem}
	\label{GlobLocErr_disc2}
	Assume that there exist $\boldsymbol{v}_h^i\in \boldsymbol{S}^{\mathrm{v}}_{h,n_i}(\omega_i)$ for $i=1,\cdots,M$ such that
	\begin{equation*}							    \hdinorm{\boldsymbol{u}^{\mathrm{e}}_h-\boldsymbol{u}_{h,i}^{\mathrm{par}}-\boldsymbol{v}_h^i}{\omega_i;a} 
		\leq \epsilon_i \hdinorm{\boldsymbol{u}^{\mathrm{e}}_h - {\boldsymbol{\psi}}_{h,i}}{\omega_i;a}.
	\end{equation*}
	Let $\boldsymbol{v}_{h}=\sum_{i=1}^{M}\Pi_{h}(\chi_{i}\boldsymbol{v}^{i}_{h}) \in \boldsymbol{S}^{\mathrm{v}}_{h,n}(\Omega)$ and $\displaystyle \epsilon_{\rm max}=\max_{1\leq i\leq M}\epsilon_i$. Then, 
	\begin{align*}
		\lnorm{\boldsymbol{u}_h^{\mathrm{e}}-\boldsymbol{u}_h^{\mathrm{par}}-\boldsymbol{v}_h}{\Omega;a}
		&\leq C\sqrt{\kappa\kappa^*} \big(\alpha_1/\alpha_0\big)^{1/2}\tilde{C}_{h}
		\epsilon_{\rm max},\\
		\lnorm{\di\left(\boldsymbol{u}_h^{\mathrm{e}}-\boldsymbol{u}_h^{\mathrm{par}}-\boldsymbol{v}_h\right)}{\Omega}&\leq \sqrt{2\kappa\kappa^*}\alpha_1^{1/2}\big(\max\limits_{1\leq i\leq M}\norm{\nabla\chi_i}{L^\infty(\omega_i)}\big)\tilde{C}_{h}\epsilon_{\rm max},\\ 
        \inf_{p_h\in Q_h^{\mathrm{MS}}} \lnorm{p^{\mathrm{e}}_h-p_h^{\mathrm{par}}-p_h}{\Omega}&\leq \sqrt{2\kappa^*} \beta^{-1}_{h,\mathrm{min}}\tilde{C}_{h} \epsilon_{\rm max},
	\end{align*}
	where $C$ depends only on the shape regularity of the mesh $\mathcal{T}_h$, $\tilde{C}_{h}=\big(C_{\operatorname{stab},h} + C_{\operatorname{max},h}\big)\lnorm{f}{\Omega}$ with $C_{\operatorname{max},h} = \max_{1\leq i\leq M} C_{\operatorname{stab},h}^i$,  $\kappa$ and $\kappa^*$ are defined in (\ref{pointwise_overlap}), and $\beta_{h,\mathrm{min}}$ is defined in \cref{IS_MS_disc}.
\end{theorem}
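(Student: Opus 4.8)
The plan is to mimic the proof of \cref{GlobLocErr}, the only genuinely new ingredient being the control of the Raviart--Thomas interpolation operator $\Pi_h$ on the products $\chi_i\boldsymbol{v}_h^i$ and $\chi_i\boldsymbol{u}_{h,i}^{\mathrm{par}}$. Since $\boldsymbol{u}_h^{\mathrm{e}}\in\boldsymbol{V}_{h,0}$ we have $\Pi_h\boldsymbol{u}_h^{\mathrm{e}}=\boldsymbol{u}_h^{\mathrm{e}}$, so by linearity of $\Pi_h$ and $\sum_i\chi_i\equiv 1$,
\[
\boldsymbol{u}_h^{\mathrm{e}}-\boldsymbol{u}_h^{\mathrm{par}}-\boldsymbol{v}_h=\sum_{i=1}^M\Pi_h\big(\chi_i\,\boldsymbol{e}_{h,i}\big),\qquad\boldsymbol{e}_{h,i}:=\boldsymbol{u}_h^{\mathrm{e}}-\boldsymbol{u}_{h,i}^{\mathrm{par}}-\boldsymbol{v}_h^i .
\]
Each $\boldsymbol{e}_{h,i}$ is piecewise polynomial and, on $\omega_i$, divergence-free: indeed $\boldsymbol{u}_h^{\mathrm{e}}|_{\omega_i^*}-\boldsymbol{\psi}_{h,i}\in\boldsymbol{W}_h(\omega_i^*)$, and the eigenfunctions spanning $\boldsymbol{S}^{\mathrm{v}}_{h,n_i}(\omega_i)$ lie in $\boldsymbol{W}_h(\omega_i^*)\subset\boldsymbol{K}_h(\omega_i^*)$, all of whose members are divergence-free. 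Since $\chi_i$ vanishes outside $\omega_i$ and $\omega_i$ is resolved by $\mathcal{T}_h$, the field $\Pi_h(\chi_i\boldsymbol{e}_{h,i})$ is supported in $\omega_i$.

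The key local estimate is
\[
\lnorm{\Pi_h(\chi_i\boldsymbol{e}_{h,i})}{\omega_i}\leq C\,\lnorm{\boldsymbol{e}_{h,i}}{\omega_i},
\]
with $C$ depending only on the shape regularity of $\mathcal{T}_h$ and on $C_1$ in (\ref{PU}). I would prove it elementwise: the standard Raviart--Thomas interpolation estimate gives $\lnorm{\chi_i\boldsymbol{e}_{h,i}-\Pi_h(\chi_i\boldsymbol{e}_{h,i})}{K}\leq C h_K|\chi_i\boldsymbol{e}_{h,i}|_{H^1(K)}$; the product rule, the inverse inequality $|\boldsymbol{e}_{h,i}|_{H^1(K)}\leq C h_K^{-1}\lnorm{\boldsymbol{e}_{h,i}}{K}$ (valid since $\boldsymbol{e}_{h,i}$ is a polynomial on $K$) and the scaling $h_K\norm{\nabla\chi_i}{L^\infty}\leq C_1$ then yield $\lnorm{\chi_i\boldsymbol{e}_{h,i}-\Pi_h(\chi_i\boldsymbol{e}_{h,i})}{K}\leq C\lnorm{\boldsymbol{e}_{h,i}}{K}$, while $0\leq\chi_i\leq 1$ controls $\chi_i\boldsymbol{e}_{h,i}$ itself; summing over $K\subset\omega_i$ finishes it. For the divergence, the commuting property $\operatorname{div}\Pi_h=P^0_h\operatorname{div}$ (with $P^0_h$ the $L^2(\Omega)$-orthogonal projection onto $Q_h$) together with $\operatorname{div}\boldsymbol{e}_{h,i}=0$ on $\omega_i$ gives $\operatorname{div}\Pi_h(\chi_i\boldsymbol{e}_{h,i})=P^0_h(\nabla\chi_i\cdot\boldsymbol{e}_{h,i})$, hence $\lnorm{\operatorname{div}\Pi_h(\chi_i\boldsymbol{e}_{h,i})}{\omega_i}\leq\norm{\nabla\chi_i}{L^\infty(\omega_i)}\lnorm{\boldsymbol{e}_{h,i}}{\omega_i}$.

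Assembling these bounds with the finite-overlap conditions (\ref{pointwise_overlap}) and passing between weighted and unweighted $L^2$-norms via $\alpha_0,\alpha_1$, I obtain
\[
\lnorm{\boldsymbol{u}_h^{\mathrm{e}}-\boldsymbol{u}_h^{\mathrm{par}}-\boldsymbol{v}_h}{\Omega;a}^2\leq\kappa\,C^2\,\tfrac{\alpha_1}{\alpha_0}\sum_{i=1}^M\lnorm{\boldsymbol{e}_{h,i}}{\omega_i;a}^2,
\]
and the analogous estimate for $\lnorm{\operatorname{div}(\cdot)}{\Omega}^2$ with $\alpha_1\big(\max_i\norm{\nabla\chi_i}{L^\infty(\omega_i)}\big)^2$ in place of $C^2\alpha_1/\alpha_0$. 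Since $\boldsymbol{e}_{h,i}$ is divergence-free on $\omega_i$, the hypothesis gives $\lnorm{\boldsymbol{e}_{h,i}}{\omega_i;a}=\hdinorm{\boldsymbol{e}_{h,i}}{\omega_i;a}\leq\epsilon_i\hdinorm{\boldsymbol{u}_h^{\mathrm{e}}-\boldsymbol{\psi}_{h,i}}{\omega_i^*;a}=\epsilon_i\lnorm{\boldsymbol{u}_h^{\mathrm{e}}-\boldsymbol{\psi}_{h,i}}{\omega_i^*;a}$, using again that $\boldsymbol{u}_h^{\mathrm{e}}|_{\omega_i^*}-\boldsymbol{\psi}_{h,i}$ is divergence-free; and the stability estimates (\ref{eq:stability_inequality_disc}) combined with the $\kappa^*$-overlap of $\{\omega_i^*\}$ give $\sum_i\lnorm{\boldsymbol{u}_h^{\mathrm{e}}-\boldsymbol{\psi}_{h,i}}{\omega_i^*;a}^2\leq 2\kappa^*\tilde{C}_h^2$, exactly as (\ref{u_min_part_proof}) in the continuous proof (with $g_N=0$). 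Substituting proves the velocity and divergence bounds. For the pressure I would prove the discrete analogues of \cref{lemma_pres_recon} and \cref{globPresEr}: \cref{lemma_pres_recon} carries over verbatim from (\ref{eq_pressure_recon_disc}) using divergence-freeness and $\omega_i^0\subset\subset\omega_i^*$; \cref{globPresEr} then follows by forming the error equation from (\ref{neuWeakDisc}) and (\ref{partWeakDisc}), adding the piecewise constant $c^i=\dashint_{\omega_i^0}p_h^{\mathrm{e}}\dx$ (so that $\sum_i\mathbbm{1}_{\omega_i^0}c^i\in Q^{\mathrm{RT}}_0$, since $\int_\Omega p_h^{\mathrm{e}}=0$), and applying the discrete inf-sup stability of $\boldsymbol{V}_{h,0}(\omega_i^0)/Q_{h,0}(\omega_i^0)$ to bound the local pressure error by $\beta_h(\omega_i^0)^{-1}\lnorm{\boldsymbol{e}_{h,i}}{\omega_i^0;a}$. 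Summing over $i$, using $\omega_i^0\subset\omega_i$ and the bounds just derived, yields the pressure estimate.

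The main obstacle is the local $L^2$-stability of $\Pi_h(\chi_i\,\cdot)$: unlike in the continuous setting, $\Pi_h$ is not bounded on $\hdi{}{}{\omega_i}$, so one must genuinely exploit that $\boldsymbol{e}_{h,i}$ is piecewise polynomial (to invoke an inverse inequality) and that $\nabla\chi_i$ scales like $\operatorname{diam}(\omega_i)^{-1}$. Verifying the commuting identity carefully on the merely piecewise smooth product $\chi_i\boldsymbol{e}_{h,i}$ is the other point requiring attention, but it is classical for Raviart--Thomas interpolation.
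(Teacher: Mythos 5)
Your proposal is correct and follows essentially the same route as the paper: the same decomposition $\sum_i\Pi_h(\chi_i\boldsymbol{e}_{h,i})$ with $\boldsymbol{e}_{h,i}\in\boldsymbol{K}_h(\omega_i)$, the same key stability lemma for $\Pi_h(\chi_i\,\cdot)$ proved via the local interpolation estimate, product rule and inverse inequality for the $L^2$ part and the commuting property $\operatorname{div}\Pi_h=P_h\operatorname{div}$ together with divergence-freeness for the divergence part, followed by the identical overlap/stability assembly and the pressure argument carried over verbatim from the continuous setting. No gaps worth noting.
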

To prove \cref{GlobLocErr_disc2}, we need the following lemma concerning stability of the interpolation operator $\Pi_h$.
	\begin{lemma}
		\label{lem_approx_RT}
		Let $\boldsymbol{u}_h\in \boldsymbol{K}_h(\omega_i)$ and $\chi_{i}$ be the partition of unity function supported on $\omega_i$. Then,
		\begin{align}
\big\Vert{\Pi_h(\chi_{i} \boldsymbol{u}_h)}\big\Vert_{L^{2}(\omega_i)} &\leq C \lnorm{\boldsymbol{u}_h}{\omega_i},\label{L2-stability}\\
\big\Vert{\rm div}\,\Pi_h(\chi_{i} \boldsymbol{u}_h)\big\Vert_{L^{2}(\omega_i)} &\leq \norm{\nabla\chi_i}{L^\infty(\omega_i)} \lnorm{\boldsymbol{u}_h}{\omega_i},\label{div-stability}
  \end{align}
where $C>0$ depends only on the shape regularity of the mesh. 
  \end{lemma}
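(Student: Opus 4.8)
The plan is to localise both bounds to a single element $K$ of $\mathcal{T}_h$ with $K\subset\omega_i$ and to exploit the structure of the lowest-order Raviart--Thomas space $RT_0(K)$. Since the mesh resolves $\omega_i$, every element lies either in $\omega_i$ or in its complement; on each $K\subset\omega_i$ the field $\chi_i\boldsymbol{u}_h$ is the product of a (globally Lipschitz) $C^1$ function and a polynomial vector field, so it belongs to $(W^{1,\infty}(K))^d$ and, after zero extension, to $\hdi{}{}{\Omega}$. Hence $\Pi_h(\chi_i\boldsymbol{u}_h)$ is well defined and the face degrees of freedom are preserved, $\int_F\Pi_h(\chi_i\boldsymbol{u}_h)\cdot\boldsymbol{\nu}_F=\int_F\chi_i\boldsymbol{u}_h\cdot\boldsymbol{\nu}_F$ for every face $F\subset\partial K$. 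It then suffices to prove the two inequalities with $\omega_i$ replaced by such a $K$ and to square and sum over $K\subset\omega_i$.

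For the divergence bound, I would use that $\operatorname{div}\Pi_h(\chi_i\boldsymbol{u}_h)$ is piecewise constant and, by the divergence theorem together with the preservation of the face degrees of freedom, equals on each $K$ the mean value over $K$ of $\operatorname{div}(\chi_i\boldsymbol{u}_h)$; equivalently $\operatorname{div}\Pi_h(\chi_i\boldsymbol{u}_h)=\Pi_h^0\,\operatorname{div}(\chi_i\boldsymbol{u}_h)$, with $\Pi_h^0$ the $L^2$-orthogonal projection onto $Q_h$. Since $\operatorname{div}\boldsymbol{u}_h=0$ we have $\operatorname{div}(\chi_i\boldsymbol{u}_h)=\nabla\chi_i\cdot\boldsymbol{u}_h$, and the $L^2$-contractivity of $\Pi_h^0$ gives $\|\operatorname{div}\Pi_h(\chi_i\boldsymbol{u}_h)\|_{L^2(\omega_i)}\le\|\nabla\chi_i\cdot\boldsymbol{u}_h\|_{L^2(\omega_i)}\le\|\nabla\chi_i\|_{L^\infty(\omega_i)}\|\boldsymbol{u}_h\|_{L^2(\omega_i)}$, which is \eqref{div-stability}. (Only here is the divergence-free hypothesis on $\boldsymbol{u}_h$ used.)

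For the $L^2$ bound, the crucial observation is that a function in $RT_0(K)$ has \emph{constant} normal component on each face. Denoting by $\ell_F(\boldsymbol{v})=\int_F\boldsymbol{v}\cdot\boldsymbol{\nu}_F\,ds$ the $RT_0$ degrees of freedom and by $\{\boldsymbol{\phi}_F^K\}$ the dual basis, we get $\ell_F(\chi_i\boldsymbol{u}_h)=(\boldsymbol{u}_h\cdot\boldsymbol{\nu}_F)\int_F\chi_i\,ds=\bar\chi_F\,\ell_F(\boldsymbol{u}_h)$ with $\bar\chi_F:=|F|^{-1}\int_F\chi_i\,ds\in[0,1]$; since $\Pi_h$ reproduces $\boldsymbol{u}_h\in\boldsymbol{V}_h(\omega_i)$, writing $\boldsymbol{u}_h|_K=\sum_{F\subset\partial K}\ell_F(\boldsymbol{u}_h)\,\boldsymbol{\phi}_F^K$ this yields $\Pi_h(\chi_i\boldsymbol{u}_h)|_K=\sum_{F\subset\partial K}\bar\chi_F\,\ell_F(\boldsymbol{u}_h)\,\boldsymbol{\phi}_F^K$, i.e. a coefficient-wise rescaling of $\boldsymbol{u}_h|_K$ by factors in $[0,1]$. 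I would then pass to the reference element by the Piola transform: the weights $\bar\chi_F$ are invariant because the face maps are affine and scale surface measure by element-wise constants, while on the fixed reference element the $L^2$-norm of an $RT_0$ function is equivalent to the Euclidean norm of its degrees of freedom with absolute constants. Shrinking each coefficient by a factor in $[0,1]$ therefore changes the reference $L^2$-norm by at most an absolute factor, and transforming back the Piola scalings combine into $\|B_K\|\,\|B_K^{-1}\|$, bounded by the shape-regularity constant. Summing over $K\subset\omega_i$ gives \eqref{L2-stability}.

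The main obstacle is the $L^2$-estimate: applying the textbook interpolation estimate to $\chi_i\boldsymbol{u}_h$ would leave an $h_K\,|\chi_i\boldsymbol{u}_h|_{H^1(K)}$ remainder that one can only absorb by an inverse inequality on $\boldsymbol{u}_h$, at the price of an unwanted factor $h_K\|\nabla\chi_i\|_{L^\infty(\omega_i)}$; the ``constant normal trace'' reduction above circumvents this entirely. The points that still need care are verifying that the face-mean weights $\bar\chi_F$ are genuinely unchanged under the Piola pullback, and that the reference-element norm equivalences are applied uniformly over all elements of the shape-regular family.
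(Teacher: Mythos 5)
Your proof is correct. The divergence bound is argued exactly as in the paper: the commuting property $\di\,\Pi_h = P_h\,\di$ (with $P_h$ the $L^2$-projection onto $Q_h$), the contractivity of $P_h$, and $\di\boldsymbol{u}_h=0$ so that $\di(\chi_i\boldsymbol{u}_h)=\nabla\chi_i\cdot\boldsymbol{u}_h$; nothing to add there. For the $L^2$ bound (\ref{L2-stability}) you take a genuinely different route. The paper applies the elementwise interpolation error estimate to $\chi_i\boldsymbol{u}_h$, bounds $h_K|\chi_i\boldsymbol{u}_h|_{H^1(K)}$ via an inverse estimate on $\boldsymbol{u}_h$, assumes (without loss of generality) $h_K\norm{\nabla\chi_i}{L^\infty(\omega_i)}\leq 1$, and concludes by the triangle inequality — short, and using only off-the-shelf results from \cite{boffi2013mixed}, with an argument that would carry over verbatim to higher-order Raviart--Thomas spaces. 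Your argument instead exploits the fact that lowest-order Raviart--Thomas functions have constant normal components on faces, so that $\Pi_h(\chi_i\boldsymbol{u}_h)|_K$ is a degree-of-freedom-wise rescaling of $\boldsymbol{u}_h|_K$ with weights $\bar\chi_F\in[0,1]$; combined with the flux-preserving Piola map and the norm equivalence on the reference element, this yields (\ref{L2-stability}) with a constant depending only on shape regularity, with no smallness condition relating $h_K$ to $\norm{\nabla\chi_i}{L^\infty(\omega_i)}$ and with the extra structural information that the interpolant is a contraction in the DOF coefficients. The price is that the constant-normal-trace reduction is specific to the lowest-order space, whereas the paper's interpolation-plus-inverse-estimate argument generalizes more directly. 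Both the concerns you flag at the end are benign: the weights $\bar\chi_F$ need not be pulled back at all (they simply multiply the physical fluxes, which the Piola transform preserves), and the reference-element equivalence constants are absolute, with the element dependence entering only through $\Vert B_K\Vert\,\Vert B_K^{-1}\Vert$, which shape regularity controls uniformly.
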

	\begin{proof}
		We use interpolation error estimates (see \cite[Proposition 2.5.1]{boffi2013mixed}) and an inverse estimate to deduce
		\begin{align*}
			\begin{split}
				&\lnorm{\Pi_h(\chi_{i} \boldsymbol{u}_h)-\chi_{i} \boldsymbol{u}_h}{\omega_i}^{2}
				= \sum_{K\cap\omega_i\neq\emptyset}\lnorm{\Pi_h(\chi_i \boldsymbol{u}_h)-\chi_i \boldsymbol{u}_h}{K}^{2}
				\leq C\sum_{K\cap\omega_i\neq\emptyset}h^{2}_K|\chi_i \boldsymbol{u}_h|^{2}_{H^1(K)}\\
				&\leq C  \sum_{K\cap\omega_i\neq\emptyset}(\norm{\chi_i}{L^{\infty}(\omega_i)}+h_K \norm{\nabla\chi}{L^\infty(\omega_i)})^{2}\lnorm{\boldsymbol{u}_h}{K}^{2}
				\leq C  \sum_{K\cap\omega_i\neq\emptyset}\lnorm{\boldsymbol{u}_h}{K}^{2},
			\end{split}
		\end{align*}
		where we have assumed that $h_{K}\norm{\nabla\chi}{L^\infty(\omega_i)}\leq 1$ without loss of generality. Inequality (\ref{L2-stability}) follows immediately from the above estimate. We proceed to prove the second inequality. Let $P_{h}:L^{2}(\Omega)\rightarrow Q_{h}$ be the $L^{2}(\Omega)$ projection. It follows from the commuting property of $\Pi_h$ and $P_{h}$ (see \cite[pp.149]{monk2003finite}) that
  \begin{equation*}
 \big\Vert{\rm div}\,\Pi_h(\chi_{i} \boldsymbol{u}_h)\big\Vert_{L^{2}(\omega_i)} = \Vert P_{h}\big({\rm div}\,(\chi_{i} \boldsymbol{u}_h)\big)\big\Vert_{L^{2}(\omega_i)}\leq  \Vert{\rm div}\,(\chi_{i} \boldsymbol{u}_h)\big\Vert_{L^{2}(\omega_i)},   
  \end{equation*}
which gives (\ref{div-stability}) since $\boldsymbol{u}_h\in \boldsymbol{K}_h(\omega_i)$ is divergence free. 
\end{proof}
	
	\begin{proof}[Proof of \cref{GlobLocErr_disc2}]
Note that $\boldsymbol{u}_h^{\mathrm{e}}|_{\omega_i}-\boldsymbol{u}_{h,i}^{\mathrm{par}}-\boldsymbol{v}_{h,i}\in \boldsymbol{K}_h(\omega_i)$ for $i=1,\cdots,M$.	With the definition of $\boldsymbol{u}_h^{\mathrm{par}}$ and $\boldsymbol{v}_h$ in mind and using (\ref{L2-stability}) and (\ref{pointwise_overlap}), we see that
		\begin{align}
			\label{pf_l2_err_disc_1}
			\begin{split}
				&\lnorm{\boldsymbol{u}_h^{\mathrm{e}}-\boldsymbol{u}_h^{\mathrm{par}}-\boldsymbol{v}_h}{\Omega;a}^{2}\leq
				\kappa\sum_{i=1}^{M}\big\Vert \Pi_h\big(\chi_{i}(\boldsymbol{u}_h^{\mathrm{e}}-\boldsymbol{u}_{h,i}^{\mathrm{par}}-\boldsymbol{v}_{h,i})\big)\big\Vert^{2}_{L^2(\omega_i;a)}\\
				&\leq C\kappa\big(\frac{\alpha_1}{\alpha_{0}}\big) \sum_{i=1}^{M}\big\Vert \boldsymbol{u}_h^{\mathrm{e}}-\boldsymbol{u}_{h,i}^{\mathrm{par}}-\boldsymbol{v}_{h,i}\big\Vert^{2}_{L^2(\omega_i;a)}.
			\end{split}
		\end{align}
Similarly, for the divergence part, we use (\ref{div-stability}) to deduce
		\begin{align}
			\begin{split}
				\label{pf_l2_err_disc_2}
				&\lnorm{\di\left(\boldsymbol{u}_h^{\mathrm{e}}-\boldsymbol{u}_h^{\mathrm{par}}-\boldsymbol{v}_h\right)}{\Omega}^{2}
				\leq \kappa\sum_{i=1}^{M}\big\Vert {\rm div}\,\Pi_h\big(\chi_{i}(\boldsymbol{u}_h^{\mathrm{e}}-\boldsymbol{u}_{h,i}^{\mathrm{par}}-\boldsymbol{v}_{h,i})\big)\big\Vert^{2}_{L^{2}(\omega_i)}\\
				& \leq \kappa\alpha_{1}\max\limits_{1\leq i\leq M}\Vert \nabla\chi_{i}\Vert^{2}_{L^{\infty}(\omega_i)} \sum_{i=1}^{M} \big\Vert \boldsymbol{u}_h^{\mathrm{e}}-\boldsymbol{u}_{h,i}^{\mathrm{par}}-\boldsymbol{v}_{h,i}\big\Vert^{2}_{L^2(\omega_i;a)}.
			\end{split}
		\end{align}
		With (\ref{pf_l2_err_disc_1}) and (\ref{pf_l2_err_disc_2}), the error estimates for the divergence follows from similar arguments as in \cref{GlobLocErr}. The estimate for the pressure can be proved exactly as in the continuous setting.
	\end{proof}

With the global ellipticity of the bilinear form $a^{\gamma}$ and the discrete inf-sup stability (\cref{IS_MS_disc}) in hand, we can obtain the identical quasi-optimality estimates for our method as \ma{for the continuous setting in} \cref{quasi-optimality}. \ma{Having now} developed the discrete analogue of the continuous mixed MS-GFEM, and \ma{having} proved similar global error estimates for the discrete method, \ma{it} remains to prove the exponential decay of the Kolmogrov $n$-widths \ma{for both settings}. As will be seen in the next section, the proof \ma{for the discrete case} is much more complicated \ma{than the proof for the continuous one}. \ma{It} needs a careful analysis of the underlying finite element spaces.

\section{Proof of exponential decay of the Kolmogorov $n$-widths}\label{sec:proofs}
\subsection{Continuous setting}
We start with proving \cref{Caccio} which gives the Caccioppoli inequality.
\label{sec:proofs_nearly_continuous}
\begin{proof}[Proof of \cref{Caccio}]
	The proofs for (i) and (ii) use the same arguments and hence we only consider (ii). A direct calculation shows
	\begin{align}
		\label{Caccio_product}
		\begin{split}
			a_{\omega^{\ast}}( \cu (\eta {\boldsymbol{\tilde{u}}}),\cu (\eta {\boldsymbol{\tilde{v}}}))
			&= a_{\omega^{\ast}}(\nabla\eta\times{\boldsymbol{\tilde{u}}},\nabla\eta\times{\boldsymbol{\tilde{v}}})- a_{\omega^{\ast}}(\eta\cu{\boldsymbol{\tilde{u}}},\nabla\eta\times{\boldsymbol{\tilde{v}}})\\
			&+ a_{\omega^{\ast}}(\nabla\eta\times{\boldsymbol{\tilde{u}}},\eta\cu{\boldsymbol{\tilde{v}}}) +a_{\omega^{\ast}}(\cu{\boldsymbol{\tilde{u}}},\cu(\eta^2{\boldsymbol{\tilde{v}}})).
		\end{split}
	\end{align}
	Since $\eta\in W^{1,\infty}(\omega^*)$ with
	$\eta = 0$ on $\Omega\cap\partial\omega^*$ and $\boldsymbol{\tilde{v}}\in (H^1_N(\omega^*))^3$, it follows that
	$\eta^2{\boldsymbol{\tilde{v}}}\in (H^1_0(\omega^*))^3\subset \hcu{0}{}{\omega^*}$, and thus $a_{\omega^{\ast}}(\cu{\boldsymbol{\tilde{u}}},\cu(\eta^2{\boldsymbol{\tilde{v}}}))=0$ since ${\boldsymbol{\tilde{u}}}\in \boldsymbol{\tilde{H}}_a(\omega^*)$. Therefore, equality (\ref{Caccio_product}) becomes 
	\begin{align}
       \label{Caccio_product_reduced}
		\begin{split}
			&a_{\omega^{\ast}}( \cu (\eta {\boldsymbol{\tilde{u}}}),\cu (\eta {\boldsymbol{\tilde{v}}}))
			= a_{\omega^{\ast}}(\nabla\eta\times{\boldsymbol{\tilde{u}}},\nabla\eta\times{\boldsymbol{\tilde{v}}})\\
			&\quad - a_{\omega^{\ast}}(\eta\cu{\boldsymbol{\tilde{u}}},\nabla\eta\times{\boldsymbol{\tilde{v}}})+ a_{\omega^{\ast}}(\nabla\eta\times{\boldsymbol{\tilde{u}}},\eta\cu{\boldsymbol{\tilde{v}}}).
		\end{split}
	\end{align}
	Exchanging the roles of ${\boldsymbol{\tilde{u}}}$ and ${\boldsymbol{\tilde{v}}}$ above and adding the resulting equality to (\ref{Caccio_product_reduced}), we see that $a_{\omega^{\ast}}(\cu (\eta {\boldsymbol{\tilde{u}}}),\, \cu (\eta {\boldsymbol{\tilde{v}}}))= a_{\omega^{\ast}}({\boldsymbol{\tilde{u}}}\times \nabla\eta,\,{\boldsymbol{\tilde{v}}}\times \nabla\eta)$. 
\end{proof}

Next we prove \cref{theorem_nearly_exponential}. As shown in \cite{ma2022novel,ma2023unified,babuska2011optimal}, the proof of the exponential decay of the Kolmogorov $n$-width $d_n(\omega,\omega^*)$ hinges on a Caccioppoli-type inequality and a weak approximation estimate. A key difference, however, is that here we do not prove these properties for the generalized $a$-harmonic functions, but instead for the corresponding vector potentials. With the Caccioppoli inequality (\cref{Caccio}) in hand, now it remains to prove the weak approximation estimate.

\begin{lemma}
	\label{l2_approx_vecpot}
	Let $d=3$, and let $\mathcal{D}^{\ast}\subset \Omega$ be a Lipschitz domain. There exists an $m$-dimensional space $\tilde{R}_{m}(\mathcal{D}^{\ast}) \subset \boldsymbol{\tilde{H}}_a(\mathcal{D}^{\ast})$
	such that 
	\begin{equation*}
		\inf\limits_{\boldsymbol{\tilde{v}}\in \tilde{R}_{m}(\mathcal{D}^{\ast})}\lnorm{\boldsymbol{\tilde{u}}-\boldsymbol{\tilde{v}}}{\mathcal{D}^{\ast}}
		\leq Cm^{-1/d}|\mathcal{D}^*|^{1/d}|\boldsymbol{\tilde{u}}|_{H^1(\mathcal{D}^{\ast})} \quad\forall \boldsymbol{\tilde{u}}\in \boldsymbol{\tilde{H}}_a(\mathcal{D}^{\ast}),
	\end{equation*}
        where $C>0$ depends on the shape of $\mathcal{D}^{\ast}$, but not on its size.
\end{lemma}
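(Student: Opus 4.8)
\textbf{Proof plan for \cref{l2_approx_vecpot}.}
The plan is to build the finite-dimensional space $\tilde{R}_m(\mathcal{D}^\ast)$ by taking the low-frequency eigenfunctions of a suitable self-adjoint operator associated with the constraint defining $\boldsymbol{\tilde{H}}_a(\mathcal{D}^\ast)$, and then invoking the classical Weyl asymptotics of the Laplacian (rescaled to $\mathcal{D}^\ast$) to obtain the $m^{-1/d}$ decay. Concretely, the space $\boldsymbol{\tilde{H}}_a(\mathcal{D}^\ast)\subset (H^1_N(\mathcal{D}^\ast))^3$ is a closed subspace of $H^1$, and the embedding into $(L^2(\mathcal{D}^\ast))^3$ is compact by Rellich--Kondrachov. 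Consider the generalized eigenvalue problem: find $(\mu_k,\boldsymbol{\tilde{w}}_k)$ with $\boldsymbol{\tilde{w}}_k\in\boldsymbol{\tilde{H}}_a(\mathcal{D}^\ast)$ such that
\begin{equation*}
 (\nabla\boldsymbol{\tilde{w}}_k,\nabla\boldsymbol{\tilde{v}})_{L^2(\mathcal{D}^\ast)} = \mu_k\,(\boldsymbol{\tilde{w}}_k,\boldsymbol{\tilde{v}})_{L^2(\mathcal{D}^\ast)}\quad\forall\,\boldsymbol{\tilde{v}}\in\boldsymbol{\tilde{H}}_a(\mathcal{D}^\ast),
\end{equation*}
with $0\le\mu_1\le\mu_2\le\cdots\to\infty$. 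Setting $\tilde{R}_m(\mathcal{D}^\ast):=\mathrm{span}\{\boldsymbol{\tilde{w}}_1,\dots,\boldsymbol{\tilde{w}}_m\}$, the standard spectral/Courant--Fischer argument (used in the same way in \cite{babuska2011optimal} for scalar harmonic spaces) gives
\begin{equation*}
 \inf_{\boldsymbol{\tilde{v}}\in\tilde{R}_m(\mathcal{D}^\ast)}\lnorm{\boldsymbol{\tilde{u}}-\boldsymbol{\tilde{v}}}{\mathcal{D}^\ast}\le\mu_{m+1}^{-1/2}\,|\boldsymbol{\tilde{u}}|_{H^1(\mathcal{D}^\ast)}\quad\forall\,\boldsymbol{\tilde{u}}\in\boldsymbol{\tilde{H}}_a(\mathcal{D}^\ast).
\end{equation*}

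The next step is to bound $\mu_{m+1}$ from below. Since $\boldsymbol{\tilde{H}}_a(\mathcal{D}^\ast)$ is a subspace of $(H^1_N(\mathcal{D}^\ast))^3$, the min-max principle immediately gives $\mu_k\ge\nu_k$, where $\nu_k$ is the $k$-th eigenvalue of the (componentwise) Neumann Laplacian on $\mathcal{D}^\ast$ — in fact one should be slightly careful, since $H^1_N$ imposes a zero normal trace on $\partial\mathcal{D}^\ast\cap\partial\Omega$, which only raises the eigenvalues, so the Neumann comparison is still valid. By Weyl's law, $\nu_k\sim C_d\,(k/|\mathcal{D}^\ast|)^{2/d}$ as $k\to\infty$, and more precisely there is a constant $c>0$ depending only on the shape (Lipschitz character) of $\mathcal{D}^\ast$, not its diameter, such that $\nu_k\ge c\,(k/|\mathcal{D}^\ast|)^{2/d}$ for all $k$; the scale-invariance is seen by the change of variables $\boldsymbol{x}\mapsto\boldsymbol{x}/\mathrm{diam}(\mathcal{D}^\ast)$, under which $\nu_k$ scales like $\mathrm{diam}(\mathcal{D}^\ast)^{-2}$ and $|\mathcal{D}^\ast|$ like $\mathrm{diam}(\mathcal{D}^\ast)^{d}$. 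Combining, $\mu_{m+1}^{-1/2}\le c^{-1/2}\,|\mathcal{D}^\ast|^{1/d}\,(m+1)^{-1/d}\le C\,m^{-1/d}|\mathcal{D}^\ast|^{1/d}$, which is the claimed estimate.

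The main obstacle, and the point requiring the most care, is the uniformity of the constant $C$ with respect to the \emph{shape} of $\mathcal{D}^\ast$ rather than just its size: Weyl asymptotics hold for every fixed Lipschitz domain, but one needs a lower bound on $\nu_k$ that is genuinely uniform over the family of oversampling domains in play. This is handled by noting that in practice $\mathcal{D}^\ast$ ranges over a finite collection (or a compact family) of domains obtained by scaling a fixed reference configuration, so a uniform $c$ exists; alternatively one can enclose $\mathcal{D}^\ast$ in a box and use domain monotonicity of Dirichlet eigenvalues together with an extension-operator argument, at the cost of a constant depending on the extension operator norm, which again depends only on the shape. A secondary technical point is that the eigenfunctions $\boldsymbol{\tilde{w}}_k$ indeed lie in $\boldsymbol{\tilde{H}}_a(\mathcal{D}^\ast)$ by construction (the constraint $a_{\mathcal{D}^\ast}(\cu\,\cdot,\cu\,\cdot)=0$ is built into the test space), so no projection step is needed — this is exactly why we define the eigenproblem on the constrained space rather than on all of $(H^1_N)^3$ and then project.
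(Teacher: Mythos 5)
Your proposal is correct and, at its core, it is the same argument as the paper's: the optimal space is spanned by eigenfunctions of the Laplace-type eigenproblem posed on the constrained space $\boldsymbol{\tilde{H}}_a(\mathcal{D}^\ast)$, and the rate is obtained by comparing with the unconstrained embedding $(H^1(\mathcal{D}^\ast))^3\hookrightarrow (L^2(\mathcal{D}^\ast))^3$. The paper phrases this comparison in the language of Kolmogorov $n$-widths (Pinkus' characterization of the optimal space, monotonicity $d_m(\boldsymbol{V})\le d_m(\boldsymbol{W})$ under inclusion, and a classical finite-element bound $d_m(\boldsymbol{W})\le Cm^{-1/d}|\mathcal{D}^\ast|^{1/d}$), whereas you phrase it through Courant--Fischer min--max comparison with the componentwise Neumann Laplacian plus a Weyl-type lower bound; these are two packagings of the same mechanism, and your rescaling remark correctly delivers the size-independence of the constant. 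Two small points. First, your generalized eigenproblem is degenerate whenever constants belong to $\boldsymbol{\tilde{H}}_a(\mathcal{D}^\ast)$ (e.g.\ interior oversampling domains, where $H^1_N=H^1$): the $H^1$ semi-norm has a nontrivial kernel, so the paper first passes to the zero-mean subspace $\boldsymbol{V}(\mathcal{D}^\ast)$, takes $m-d$ eigenfunctions there, and adds $\mathbb{R}^d$ back, writing $\tilde{R}_m(\mathcal{D}^\ast)=\boldsymbol{V}^{\rm opt}_{m-d}(\mathcal{D}^\ast)\oplus\mathbb{R}^d$; your spectral bound via $\mu_{m+1}^{-1/2}$ survives this degeneracy (the zero modes simply occupy the first few eigenvalues), but you should state the splitting, or at least note that $\mu_{m+1}>0$ once $m\ge d$ and that the eigenfunctions are complete in the $L^2$-closure of the space. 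Second, your ``main obstacle'' about uniformity of the constant over the family of oversampling domains is not actually required by the statement: the lemma allows $C$ to depend on the shape of $\mathcal{D}^\ast$ (only independence of its size is claimed), so the fixed-domain Weyl/extension argument together with scaling is already sufficient, and the discussion of compact families of shapes can be dropped. Also, a cosmetic slip: $H^1_N$ imposes a vanishing trace, not a vanishing normal trace, on $\partial\mathcal{D}^\ast\cap\partial\Omega$; your monotonicity reasoning is unaffected.
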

\begin{proof}
Denote by
\begin{align*}
		\boldsymbol{V}(\mathcal{D}^{\ast}) &= \big\{\boldsymbol{u}\in \boldsymbol{\tilde{H}}_a(\mathcal{D}^{\ast}):(u_i, 1)_{L^{2}(\mathcal{D}^{\ast})} = 0, \;\; i=1,\cdots,d \big\},\\
  \boldsymbol{W}(\mathcal{D}^{\ast}) &= \big\{\boldsymbol{u}\in (H^{1}(\mathcal{D}^{\ast}))^{d}:(u_i, 1)_{L^{2}(\mathcal{D}^{\ast})} = 0, \;\; i=1,\cdots,d \big\}.
	\end{align*}
Since $\mathcal{D}^{\ast}$ is a Lipschitz domain, we see that $\boldsymbol{V}(\mathcal{D}^{\ast})$ and $\boldsymbol{W}(\mathcal{D}^{\ast})$ are Hilbert spaces when equipped with the $H^{1}$ semi-norm $|\cdot|_{H^{1}(\mathcal{D}^{\ast})}$. Moreover, the embedding $\big(\boldsymbol{V}(\mathcal{D}^{\ast}), |\cdot|_{H^{1}(\mathcal{D}^{\ast})}\big)\to ((L^2(\mathcal{D}^{\ast}))^d,\lnorm{\cdot}{\mathcal{D}^{\ast}})$ is compact, and we denote its Kolmogrov $m$-width by $d_{m}(\boldsymbol{V})$. For each $j\in\mathbb{N}$, let $\boldsymbol{\tilde\phi}_{j}$ be the $j$-th eigenfunction of the eigenproblem
	\begin{equation*}
		(\boldsymbol{\tilde\phi},\boldsymbol{\tilde{v}})_{L^2(\mathcal{D}^{\ast})} = \lambda(\nabla\boldsymbol{\tilde\phi},\nabla\boldsymbol{\tilde{v}})_{L^2(\mathcal{D}^{\ast})} \quad \forall\boldsymbol{\tilde{v}}\in \boldsymbol{V}(\mathcal{D}^{\ast}).
	\end{equation*}
Then by \cite[Theorem 2.5]{pinkus2012n}, $\boldsymbol{V}^{\rm opt}_{m}(\mathcal{D}^{\ast}) = \operatorname{span}\{\boldsymbol{\tilde{\phi}}_{1},\cdots,\boldsymbol{\tilde{\phi}}_{m}\}$ is the optimal approximation space associated with $d_{m}(\boldsymbol{V})$ and thus
	\begin{equation}
		\label{best_epsilon_space}
		\inf\limits_{\boldsymbol{\tilde{v}}\in \boldsymbol{V}^{\rm opt}_{m}(\mathcal{D}^{\ast})}\lnorm{\boldsymbol{\tilde{u}}-\boldsymbol{\tilde{v}}}{\mathcal{D}^{\ast}}
		\leq d_{m}(\boldsymbol{V}) |\boldsymbol{\tilde{u}}|_{H^{1}(\mathcal{D}^{\ast})} \quad\forall \boldsymbol{\tilde{u}}\in \boldsymbol{V}(\mathcal{D}^{\ast}).
	\end{equation}
We note that the inclusion $\boldsymbol{V}(\mathcal{D}^{\ast})\subset \boldsymbol{W}(\mathcal{D}^{\ast})$ implies $d_{m}(\boldsymbol{V}) \leq d_{m}(\boldsymbol{W})$, where $d_{m}(\boldsymbol{W})$ denotes the $m$-width of the compact embedding $\big(\boldsymbol{W}(\mathcal{D}^{\ast}), |\cdot|_{H^{1}(\mathcal{D}^{\ast})}\big)\to (L^2(\mathcal{D}^{\ast}))^d$. Classical approximation theory (e.g., finite elements) further gives that
\begin{equation}\label{best_epsilon_space2}
d_{m}(\boldsymbol{V})\leq d_{m}(\boldsymbol{W}) \leq Cm^{-1/d}|\mathcal{D}^*|^{1/d},    
\end{equation}
where $C>0$ depend on the shape of $\mathcal{D}^*$. Let $\tilde{R}_{m}(\mathcal{D}^{\ast})= \boldsymbol{V}^{\rm opt}_{m-d}(\mathcal{D}^{\ast}) \oplus \mathbb{R}^{d}$. Combining (\ref{best_epsilon_space}) and (\ref{best_epsilon_space2}), and noting that $\boldsymbol{\tilde{H}}_a(\mathcal{D}^{\ast}) = \boldsymbol{V}(\mathcal{D}^{\ast})\oplus \mathbb{R}^{d}$, we get the desired result.
 \end{proof}

Combining \cref{l2_approx_vecpot} and the Caccioppoli inequality leads to the following intermediate approximation result.

  	\begin{lemma}
		\label{lemma_induction_start}
		Let $d=3$, and let $\mathcal{D}\subset\mathcal{D}^{\ast}$ be subsets of $\Omega$ with $\delta={\rm dist}(\mathcal{D},\partial \mathcal{D}^{\ast}\setminus \partial \Omega)>0$. Moreover, we assume that $\mathcal{D}^{\ast}$ is a Lipschitz domain satisfying the hypotheses of \cref{exVecPotnew}. There exists an $m$-dimensional space $R_m(\mathcal{D}^{\ast})\subset \boldsymbol{H}_a(\mathcal{D}^{\ast})$ such that for all ${\boldsymbol{{u}}}\in \boldsymbol{{H}}_a(\mathcal{D}^{\ast})$,
		\begin{equation}\label{aux-estimate} 
			\inf_{{\boldsymbol{{w}}}\in R_m(\mathcal{D}^{\ast})}\lnorm{ {\boldsymbol{{u}}}-{\boldsymbol{{w}}}}{\mathcal{D};a}
			\leq  C\big(\alpha_{1}/\alpha_{0}\big)^{1/2}m^{-1/d}|\mathcal{D}^{\ast}|^{1/d}\delta^{-1} \lnorm{{\boldsymbol{{u}}}}{\mathcal{D}^*;a},
		\end{equation}
		where $C>0$ depends on the shape of $\mathcal{D}^{\ast}$.
	\end{lemma}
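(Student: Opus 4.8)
The plan is to combine the two ingredients that have just been established: the weighted approximation result for vector potentials in \cref{l2_approx_vecpot}, and the Caccioppoli inequality in \cref{Caccio}(ii). The key point is that every $\boldsymbol{u}\in\boldsymbol{H}_a(\mathcal{D}^{\ast})$ is divergence-free and, by \cref{exVecPotnew}, has an $H^1_N$ vector potential $\tilde{\boldsymbol{u}}\in\boldsymbol{\tilde{H}}_a(\mathcal{D}^{\ast})$ with $\cu\tilde{\boldsymbol{u}}=\boldsymbol{u}$; so approximating $\boldsymbol{u}$ in $\omega$ reduces to approximating $\tilde{\boldsymbol{u}}$ in a weighted $L^2$ sense on $\mathcal{D}^{\ast}$ and then taking curls, with a cut-off function absorbing the loss of the $H^1$-to-$L^2$ gain into a factor $\delta^{-1}$.

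First I would fix a cut-off $\eta\in W^{1,\infty}(\mathcal{D}^{\ast})$ with $\eta=1$ on $\mathcal{D}$, $\eta=0$ on $\partial\mathcal{D}^{\ast}\setminus\partial\Omega$, and $\|\nabla\eta\|_{L^{\infty}(\mathcal{D}^{\ast})}\le C\delta^{-1}$, which is possible since $\operatorname{dist}(\mathcal{D},\partial\mathcal{D}^{\ast}\setminus\partial\Omega)=\delta>0$. Given $\boldsymbol{u}\in\boldsymbol{H}_a(\mathcal{D}^{\ast})$, let $\tilde{\boldsymbol{u}}\in\boldsymbol{\tilde{H}}_a(\mathcal{D}^{\ast})$ be its vector potential from \cref{exVecPotnew}, so that $|\tilde{\boldsymbol{u}}|_{H^1(\mathcal{D}^{\ast})}\le\|\tilde{\boldsymbol{u}}\|_{H^1(\mathcal{D}^{\ast})}\le C_{pot}\lnorm{\boldsymbol{u}}{\mathcal{D}^{\ast}}\le C_{pot}\alpha_1^{1/2}\lnorm{\boldsymbol{u}}{\mathcal{D}^{\ast};a}$. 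Apply \cref{l2_approx_vecpot} to get an $m$-dimensional space $\tilde{R}_m(\mathcal{D}^{\ast})\subset\boldsymbol{\tilde{H}}_a(\mathcal{D}^{\ast})$ and some $\boldsymbol{\tilde v}\in\tilde{R}_m(\mathcal{D}^{\ast})$ with $\lnorm{\tilde{\boldsymbol{u}}-\boldsymbol{\tilde v}}{\mathcal{D}^{\ast}}\le Cm^{-1/d}|\mathcal{D}^{\ast}|^{1/d}|\tilde{\boldsymbol{u}}|_{H^1(\mathcal{D}^{\ast})}$. Now define the candidate approximation space $R_m(\mathcal{D}^{\ast}):=\cu\big(\eta\,\tilde{R}_m(\mathcal{D}^{\ast})\big)\subset\boldsymbol{H}_a(\mathcal{D}^{\ast})$ — note this really lies in $\boldsymbol{H}_a(\mathcal{D}^{\ast})$: it is divergence-free and has vanishing normal trace where $\eta$ vanishes, and its dimension is at most $m$ — and take $\boldsymbol{w}:=\cu(\eta\boldsymbol{\tilde v})\in R_m(\mathcal{D}^{\ast})$.

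It then remains to estimate $\lnorm{\boldsymbol{u}-\boldsymbol{w}}{\mathcal{D};a}$. Since $\eta\equiv1$ on $\mathcal{D}$ we have $\boldsymbol{u}-\boldsymbol{w}=\cu(\tilde{\boldsymbol{u}}-\eta\boldsymbol{\tilde v})=\cu\big(\eta(\tilde{\boldsymbol{u}}-\boldsymbol{\tilde v})\big)$ on $\mathcal{D}$, so
\begin{equation*}
\lnorm{\boldsymbol{u}-\boldsymbol{w}}{\mathcal{D};a}
\le \lnorm{\cu\big(\eta(\tilde{\boldsymbol{u}}-\boldsymbol{\tilde v})\big)}{\mathcal{D}^{\ast};a}.
\end{equation*}
Because $\tilde{\boldsymbol{u}}-\boldsymbol{\tilde v}\in\boldsymbol{\tilde{H}}_a(\mathcal{D}^{\ast})$, I can invoke the Caccioppoli inequality \cref{Caccio}(ii) to bound the right-hand side by $\|\nabla\eta\|_{L^{\infty}(\mathcal{D}^{\ast})}\lnorm{\tilde{\boldsymbol{u}}-\boldsymbol{\tilde v}}{\mathcal{D}^{\ast};a}\le\alpha_0^{-1/2}\|\nabla\eta\|_{L^{\infty}(\mathcal{D}^{\ast})}\lnorm{\tilde{\boldsymbol{u}}-\boldsymbol{\tilde v}}{\mathcal{D}^{\ast}}$. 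Chaining this with the approximation bound for $\tilde{\boldsymbol{u}}-\boldsymbol{\tilde v}$, the bound on $|\tilde{\boldsymbol{u}}|_{H^1}$, and $\|\nabla\eta\|_{L^{\infty}}\le C\delta^{-1}$ gives exactly
\begin{equation*}
\lnorm{\boldsymbol{u}-\boldsymbol{w}}{\mathcal{D};a}
\le C\big(\alpha_1/\alpha_0\big)^{1/2}m^{-1/d}|\mathcal{D}^{\ast}|^{1/d}\delta^{-1}\lnorm{\boldsymbol{u}}{\mathcal{D}^{\ast};a}.
\end{equation*}
Taking the infimum over $\boldsymbol{w}\in R_m(\mathcal{D}^{\ast})$ finishes the proof. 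The main technical point to be careful about is the bookkeeping around the vector potential: one must check that $R_m(\mathcal{D}^{\ast})$ genuinely sits inside $\boldsymbol{H}_a(\mathcal{D}^{\ast})$ (divergence-free, correct normal-trace behaviour from the cut-off) and that its dimension does not exceed $m$; the rest is a routine chain of the two quoted estimates. In $d=2$ the identical argument goes through with $\cu$ replaced by $\cuScal$, the vector potential by a stream function in $\tilde{H}_a(\omega^{\ast})$, and \cref{Caccio}(i) in place of \cref{Caccio}(ii).
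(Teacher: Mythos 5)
Your chain of estimates is the right one and, up to the placement of the cut-off, is exactly the paper's argument (vector potential from \cref{exVecPotnew}, the $m$-dimensional space from \cref{l2_approx_vecpot}, then \cref{Caccio}(ii) with $\eta\equiv1$ on $\mathcal{D}$ to trade the $H^1$-to-$L^2$ gain for a factor $\delta^{-1}$). However, there is a genuine gap in the construction of the approximation space. You set $R_m(\mathcal{D}^{\ast}):=\cu\big(\eta\,\tilde{R}_m(\mathcal{D}^{\ast})\big)$ and justify the required inclusion $R_m(\mathcal{D}^{\ast})\subset\boldsymbol{H}_a(\mathcal{D}^{\ast})$ only by checking divergence-freeness and the normal-trace condition. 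That is not enough: membership in $\boldsymbol{H}_a(\mathcal{D}^{\ast})$ also requires the generalized $a$-harmonicity $a_{\mathcal{D}^{\ast}}(\cdot,\boldsymbol{z})=0$ for all $\boldsymbol{z}\in\hdi{0}{0}{\mathcal{D}^{\ast}}$, and multiplication by the cut-off destroys it: $\cu(\eta\boldsymbol{\tilde v})=\eta\,\cu\boldsymbol{\tilde v}+\nabla\eta\times\boldsymbol{\tilde v}$, and the $\nabla\eta$-term is in general not $a$-orthogonal to $\hdi{0}{0}{\mathcal{D}^{\ast}}$, even though $\boldsymbol{\tilde v}\in\boldsymbol{\tilde{H}}_a(\mathcal{D}^{\ast})$. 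So your $R_m(\mathcal{D}^{\ast})$ is generally not contained in $\boldsymbol{H}_a(\mathcal{D}^{\ast})$, which is part of the statement and is what makes the lemma iterable on nested domains in the proof of \cref{theorem_nearly_exponential} (the remainder $\boldsymbol{u}-\boldsymbol{w}$ must again be generalized $a$-harmonic so the lemma can be reapplied).

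The fix is to remove the cut-off from the space, which is what the paper does: take $R_m(\mathcal{D}^{\ast}):=\cu\,\tilde{R}_m(\mathcal{D}^{\ast})$ and $\boldsymbol{w}:=\cu\boldsymbol{\tilde v}$; then $R_m(\mathcal{D}^{\ast})\subset\boldsymbol{H}_a(\mathcal{D}^{\ast})$ follows directly from $\cu\,\boldsymbol{\tilde{H}}_a(\mathcal{D}^{\ast})=\boldsymbol{H}_a(\mathcal{D}^{\ast})$. The cut-off is needed only inside the estimate: since $\eta\equiv1$ on $\mathcal{D}$, one has $\lnorm{\boldsymbol{u}-\boldsymbol{w}}{\mathcal{D};a}=\lnorm{\cu\big(\eta(\tilde{\boldsymbol{u}}-\boldsymbol{\tilde v})\big)}{\mathcal{D};a}\leq\lnorm{\cu\big(\eta(\tilde{\boldsymbol{u}}-\boldsymbol{\tilde v})\big)}{\mathcal{D}^{\ast};a}$, after which your application of \cref{Caccio}(ii), the bound of \cref{l2_approx_vecpot}, and the vector-potential stability estimate give the claimed inequality verbatim. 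With this single change your proof coincides with the paper's.
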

	\begin{proof}	  
		Let ${\boldsymbol{{u}}}\in \boldsymbol{{H}}_a(\mathcal{D}^{\ast})$. By \cref{exVecPotnew}, there 
		exists a vector potential ${\boldsymbol{\tilde{v}}}\in (H^1_N(\mathcal{D}^{\ast}))^3$ with
		$
			\cu{\boldsymbol{\tilde{v}}} = \boldsymbol{u}
		$
		such that
		\begin{equation}
			\label{ineq_vec_pot_proof}
			|{\boldsymbol{\tilde{v}}}|_{H^{1}(\mathcal{D}^*)} \leq C\lnorm{\boldsymbol{u}}{\mathcal{D}^{\ast}},
		\end{equation}
  where $C>0$ depends only on the shape of $\mathcal{D}^{\ast}$. We notice that $\boldsymbol{\tilde{v}}\in\boldsymbol{\tilde{H}}_a(\mathcal{D}^{\ast})$. Hence, by \cref{l2_approx_vecpot}, we can choose an ${\boldsymbol{\tilde{w}}}\in \tilde{R}_{m}(\mathcal{D}^{\ast}) \subset \boldsymbol{\tilde{H}}_a(\mathcal{D}^{\ast}) $ with 
		\begin{equation}
			\label{ineq_FEM_approx_proof}
			\lnorm{{\boldsymbol{\tilde{v}}}- {\boldsymbol{\tilde{w}}}}{\mathcal{D}^{\ast}} 
			\leq Cm^{-1/d}|\mathcal{D}^*|^{1/d}|\boldsymbol{\tilde{v}}|_{H^1(\mathcal{D}^{\ast})}.
		\end{equation}
Choose a cut-off function $\eta \in W^{1,\infty}(\mathcal{D}^{\ast})$ such that $\eta =1$ in $\mathcal{D}$, $\eta =0$ on $\Omega\cap \partial \mathcal{D}^{\ast}$, and $\Vert \nabla \eta\Vert_{L^{\infty}(\mathcal{D}^{\ast})}\leq C\delta^{-1}$. Noting that ${\boldsymbol{\tilde{v}}}-{\boldsymbol{\tilde{w}}} \in \boldsymbol{\tilde{H}}_a(\mathcal{D}^{\ast})$, we can apply \cref{Caccio}, (\ref{ineq_vec_pot_proof}), and (\ref{ineq_FEM_approx_proof}) to obtain
		\begin{align}
			\lnorm{\cu ({\boldsymbol{\tilde{v}}}-{\boldsymbol{\tilde{w}}})}{\mathcal{D};a}
			&\leq\lnorm{\cu (\eta({\boldsymbol{\tilde{v}}}-{\boldsymbol{\tilde{w}}}))}{\mathcal{D}^{\ast};a} \leq  \norm{\nabla\eta}{L^\infty(\mathcal{D}^{\ast})}  
			\lnorm{{\boldsymbol{\tilde{v}}}-{\boldsymbol{\tilde{w}}}}{\mathcal{D}^{\ast};a}\nonumber\\[0.5ex]  
			& \leq  C{\alpha_0^{-1/2}} m^{-1/d}|\mathcal{D}^*|^{1/d}\delta^{-1}|\boldsymbol{\tilde{v}}|_{H^1(\mathcal{D}^{\ast})} \label{inter_approx} \\[0.5ex]
			& \leq   C\big(\alpha_{1}/\alpha_{0}\big)^{1/2}m^{-1/d}|\mathcal{D}^{\ast}|^{1/d}\delta^{-1} \lnorm{{\boldsymbol{{u}}}}{\mathcal{D}^{\ast};a}.\nonumber 
		\end{align}
		Let $R_m(\mathcal{D}^{\ast}):=\cu \tilde{R}_m(\mathcal{D}^{\ast})$ and $\boldsymbol{w}:=\cu \boldsymbol{\tilde{w}}$. Recalling that $\cu\boldsymbol{\tilde{H}}_a(\mathcal{D}^{\ast}) = \boldsymbol{H}_a(\mathcal{D}^{\ast})$, we immediately have $R_m(\mathcal{D}^{\ast})\subset \boldsymbol{H}_a(\mathcal{D}^{\ast})$, and the desired result follows from (\ref{inter_approx}).
	\end{proof}

 Now we are ready to prove \cref{theorem_nearly_exponential}.
	\begin{proof}[Proof of \cref{theorem_nearly_exponential}]
 When $d=2$, the $n$-width $d_{n}(\omega,\omega^{\ast})$ can be rewritten as (\ref{equiv-nwidth}). In this case, the exponential decay was proved in \cite[Theorems 3.3 and 3.7]{babuska2011optimal}. The case $d=3$ can be proved similarly by iterating the estimate (\ref{aux-estimate}) on a family of nested domains betweens $\omega$ and $\omega^{\ast}$. The details can be found in \cite{babuska2011optimal,ma2022novel,ma2023wavenumber}.
\end{proof}

	\subsection{Discrete setting}
	\label{sec:proofs_nearlyExponentialDisc}
	
 	In this section, we shall prove \cref{theorem_nearly_exponential_disc}. As in the continuous case, we need to lift the discrete generalized $a$-harmonic space to vector potentials, and prove a discrete Caccioppoli inequality and a weak approximation estimate. The proof, however, is significantly more complicated than its continuous counterpart, which is mainly due to the discrete Caccioppoli inequality. To make it more precise, let us define the space of vector potentials
	\begin{align}\label{discrete_vector_potentials}
		\boldsymbol{\tilde{W}}_h(\omega^*) := \mathcal{S}_{N}(\boldsymbol{W}_h(\omega^*)),
	\end{align}
	where $\mathcal{S}_{N}: \hdi{N}{0}{\omega^*}\to (H^1_N(\omega^*))^d$ is the operator from \cref{exVecPotnew}. It is important to note that $\boldsymbol{\tilde{W}}_h(\omega^*)$ is not contained in $\tilde{\boldsymbol{H}}_a(\omega^*)$, and thus \cref{Caccio} does not hold in this case. On the other hand, Caccioppoli-type inequalities can also be proved for discrete harmonic functions in a finite element setting, where a superapproximation property of the underlying finite element space plays a key role. But here $\boldsymbol{\tilde{W}}_h(\omega^*)$ is not a finite element space! A simple trick to fix this problem is to interpolate $\boldsymbol{\tilde{W}}_h(\omega^*)$ onto a suitable finite element space, and a natural choice in this case is the N\'ed\'elec elements. Then, a Caccioppoli inequality can be proved for the interpolants by means of a superapproximation result for the N\'ed\'elec elements. Due to the interpolation, the proof of \cref{theorem_nearly_exponential_disc} relies heavily on properties of the N\'ed\'elec elements, especially the following commuting property of interpolation operators. Let $\mathcal{N}_0(\omega^*,\mathcal{T}_h)$ be the space of the lowest-order N\'ed\'elec elements on $\omega^*$, and $\Sigma_h$ be the associated interpolation operator (it is well defined on $\boldsymbol{\tilde{W}}_h(\omega^*)$ by \cite[Lemma 5.38]{monk2003finite}). Then, it holds that (see, e.g., \cite[Lemma 5.40]{monk2003finite})
	\begin{equation}\label{commuting-property}
		\cu \Sigma_h \boldsymbol{\tilde{u}} = \Pi_h \cu \boldsymbol{\tilde{u}} = \cu \boldsymbol{\tilde{u}}\quad \forall \boldsymbol{\tilde{u}}\in \boldsymbol{\tilde{W}}_h(\omega^*),
	\end{equation}
	where $\Pi_h$ is the Raviart-Thomas interpolation operator onto the space $\boldsymbol{V}_h(\omega^*)$. 
 
	We start by stating the abovementioned superapproximation result. The proof follows the ideas of \cite{demlow2011local} where a similar result was proved for Lagrange elements.

	\begin{lemma}
		\label{superapproximation}
		Let $\eta \in C^{\infty}(\omega^*)$ with $|\eta|_{W^{j,\infty}(\omega^*)}\leq C\delta^{-j}$ for $0\leq j\leq 2$. Then for each $\boldsymbol{\tilde{u}}_h\in \mathcal{N}_0(\omega^*,\mathcal{T}_h)$ and $K\in\mathcal{T}_h$ with $h_K:=\operatorname{diam}(K)\leq \textcolor{black}{\delta}$,
		\begin{align*}
			\big\Vert\cu\left[\eta^2\boldsymbol{\tilde{u}}_h - \Sigma_h(\eta^2\boldsymbol{\tilde{u}}_h)\right]\big\Vert_{L^{2}(K)}
			\leq C\left(\frac{h_K}{\delta}\lnorm{\cu(\eta \boldsymbol{\tilde{u}}_h)}{K} + \frac{h_K}{\delta^2}\lnorm{\boldsymbol{\tilde{u}}_h}{K}\right),
		\end{align*}
		where the constant $C$ depends only on the shape-regularity of the mesh.
	\end{lemma}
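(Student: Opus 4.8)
The plan is to work element by element and split $\eta^2 \boldsymbol{\tilde{u}}_h - \Sigma_h(\eta^2\boldsymbol{\tilde{u}}_h)$ into a part that the Nédélec interpolant reproduces and a remainder controlled by standard interpolation estimates. First I would recall that on a single simplex $K$, $\Sigma_h$ fixes all fields whose tangential components are constant along each edge — in particular $\Sigma_h$ is the identity on (componentwise) affine vector fields restricted to $K$, since the lowest-order Nédélec space contains all constant vector fields and the interpolation is locally determined by edge moments. Because $\boldsymbol{\tilde{u}}_h\in\mathcal{N}_0(\omega^*,\mathcal{T}_h)$, on $K$ we may write $\boldsymbol{\tilde{u}}_h = \boldsymbol{a}_K + \frac{1}{2}\boldsymbol{b}_K\times(\boldsymbol{x}-\boldsymbol{x}_K)$ for constant vectors $\boldsymbol{a}_K,\boldsymbol{b}_K$ with $\boldsymbol{b}_K = \cu\boldsymbol{\tilde{u}}_h|_K$; here $\boldsymbol{x}_K$ is the barycenter of $K$. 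The strategy is then to subtract from $\eta^2\boldsymbol{\tilde{u}}_h$ a locally affine field on $K$ that captures the leading-order behavior of $\eta^2$, namely the first-order Taylor polynomial $\eta^2(\boldsymbol{x}_K) + \nabla(\eta^2)(\boldsymbol{x}_K)\cdot(\boldsymbol{x}-\boldsymbol{x}_K)$ multiplying the constant part $\boldsymbol{a}_K$, and similar low-order terms for the rotational part, so that the residual $\boldsymbol{r}_K := \eta^2\boldsymbol{\tilde{u}}_h - (\text{affine reproduction})$ vanishes to an appropriate order and has controllable $W^{1,\infty}$ seminorm on $K$.

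Next I would invoke a scaled local interpolation estimate for $\Sigma_h$ on $K$ — available since $\boldsymbol{\tilde{u}}_h$ (and hence $\eta^2\boldsymbol{\tilde{u}}_h$, being a smooth multiple of a polynomial) is smooth on each element — of the form $\|\cu(\boldsymbol{w} - \Sigma_h\boldsymbol{w})\|_{L^2(K)} \le C h_K |\cu\boldsymbol{w}|_{H^1(K)}$ for $\boldsymbol{w}$ smooth on $K$, combined with the fact that $\Sigma_h$ reproduces the affine part exactly. Applying this to $\boldsymbol{w} = \boldsymbol{r}_K$ reduces the claim to bounding $h_K|\cu\boldsymbol{r}_K|_{H^1(K)}$, i.e. second derivatives of $\cu(\eta^2\boldsymbol{\tilde{u}}_h)$ on $K$. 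Expanding $\cu(\eta^2\boldsymbol{\tilde{u}}_h) = \eta^2\cu\boldsymbol{\tilde{u}}_h + \nabla(\eta^2)\times\boldsymbol{\tilde{u}}_h = \eta^2\boldsymbol{b}_K + 2\eta\nabla\eta\times\boldsymbol{\tilde{u}}_h$ and differentiating, every term produces either a factor $|\eta|_{W^{2,\infty}}\lesssim\delta^{-2}$ hitting $\boldsymbol{\tilde{u}}_h$, or a factor $|\eta|_{W^{1,\infty}}\lesssim\delta^{-1}$ hitting a first derivative (which on $K$ is constant, namely $\boldsymbol{b}_K$), or lower-order combinations. Using $h_K\le\delta$ to absorb higher powers of $h_K/\delta$, one obtains a bound of the form $C(h_K\delta^{-1}\|\cu\boldsymbol{\tilde{u}}_h\|_{L^2(K)} + h_K\delta^{-2}\|\boldsymbol{\tilde{u}}_h\|_{L^2(K)})$. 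Finally I would rewrite $\|\cu\boldsymbol{\tilde{u}}_h\|_{L^2(K)}$ in the slightly stronger weighted form $\|\cu(\eta\boldsymbol{\tilde{u}}_h)\|_{L^2(K)}$ that appears in the statement: since $\cu(\eta\boldsymbol{\tilde{u}}_h) = \eta\cu\boldsymbol{\tilde{u}}_h + \nabla\eta\times\boldsymbol{\tilde{u}}_h$, we have $\eta\|\cu\boldsymbol{\tilde{u}}_h\|_{L^2(K)} \le \|\cu(\eta\boldsymbol{\tilde{u}}_h)\|_{L^2(K)} + \|\nabla\eta\times\boldsymbol{\tilde{u}}_h\|_{L^2(K)}$, and the latter is absorbed into the $h_K\delta^{-2}$ term after using $h_K\le\delta$ and that $\eta$ is bounded below away from where it matters — more carefully, one keeps $\eta\cu\boldsymbol{\tilde{u}}_h$ together from the start by organizing the Taylor expansion around $\eta^2 = \eta\cdot\eta$ so that one factor of $\eta$ stays attached to $\cu\boldsymbol{\tilde{u}}_h$.

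The main obstacle I anticipate is the bookkeeping in the last step: the clean way to get $\|\cu(\eta\boldsymbol{\tilde{u}}_h)\|_{L^2(K)}$ rather than $\|\cu\boldsymbol{\tilde{u}}_h\|_{L^2(K)}$ on the right-hand side is to not expand $\eta^2$ symmetrically but to treat it as $\eta\cdot\eta$ and, in the decomposition $\eta^2\boldsymbol{\tilde{u}}_h - \Sigma_h(\eta^2\boldsymbol{\tilde{u}}_h)$, to peel off one factor of $\eta$ using that $\Sigma_h$ commutes with $\cu$ and then run the superapproximation argument on $\eta(\eta\boldsymbol{\tilde{u}}_h)$ — mimicking precisely the Lagrange-element argument of \cite{demlow2011local} but with the Nédélec interpolation operator and the identity $\cu(\eta\boldsymbol{v}) = \eta\cu\boldsymbol{v} + \nabla\eta\times\boldsymbol{v}$ replacing the scalar product rule. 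A secondary subtlety is that $\eta^2\boldsymbol{\tilde{u}}_h$ is not piecewise polynomial (because $\eta$ is not), so the local interpolation estimate for $\Sigma_h$ must be the version valid for $H^1$-regular (indeed $C^\infty$-on-$K$) fields, not the polynomial-exactness version; this is standard but must be stated with the correct scaling in $h_K$. Apart from that the argument is routine scaling on the reference element plus the Bramble–Hilbert lemma.
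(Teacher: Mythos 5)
Your skeleton is essentially the paper's: the local estimate $\big\Vert\cu[\boldsymbol{w}-\Sigma_h\boldsymbol{w}]\big\Vert_{L^2(K)}\le Ch_K|\cu\boldsymbol{w}|_{H^1(K)}$ applied to $\boldsymbol{w}=\eta^2\boldsymbol{\tilde{u}}_h$, expansion of the derivatives of $\cu(\eta^2\boldsymbol{\tilde{u}}_h)$, the representation $\boldsymbol{\tilde{u}}_h=\boldsymbol{a}_K+\tfrac12\boldsymbol{b}_K\times(\boldsymbol{x}-\boldsymbol{x}_K)$ with $\Vert\partial_i\boldsymbol{\tilde{u}}_h\Vert_{L^\infty(K)}\le C\Vert\cu\boldsymbol{\tilde{u}}_h\Vert_{L^\infty(K)}$, and inverse estimates. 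But the one step that makes the lemma what it is --- producing $\lnorm{\cu(\eta\boldsymbol{\tilde{u}}_h)}{K}$ rather than $\lnorm{\cu\boldsymbol{\tilde{u}}_h}{K}$ on the right-hand side --- is exactly where your argument has a genuine gap. Your main route ends with $\tfrac{h_K}{\delta}\lnorm{\cu\boldsymbol{\tilde{u}}_h}{K}$, and the proposed conversion via $\eta\lnorm{\cu\boldsymbol{\tilde{u}}_h}{K}\le\lnorm{\cu(\eta\boldsymbol{\tilde{u}}_h)}{K}+\lnorm{\nabla\eta\times\boldsymbol{\tilde{u}}_h}{K}$ cannot be inverted on elements where $\eta$ is small or vanishes; such elements (near $\partial\omega^*\cap\Omega$, where the cut-off is zero) are squarely within the scope of the lemma, and the appeal to ``$\eta$ bounded below away from where it matters'' is unjustified. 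The weaker bound with $\lnorm{\cu\boldsymbol{\tilde{u}}_h}{K}$ is also useless downstream: in the proof of \cref{CaccioDisc} that term could not be absorbed into the left-hand side, since $\cu\boldsymbol{\tilde{u}}_h$ is not controlled on all of $\omega^*$.

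Your second suggestion --- ``keep one factor of $\eta$ attached to $\cu\boldsymbol{\tilde{u}}_h$'' --- is the right instinct but is not carried out, and the missing ingredient is precisely the device the paper uses. After the $W^{1,\infty}(K)$ expansion you must return from $L^\infty(K)$ to $L^2(K)$ by an inverse inequality, which is only legitimate for polynomials; $\eta\,\partial_i\boldsymbol{\tilde{u}}_h$, $\eta\cu\boldsymbol{\tilde{u}}_h$ and $\cu(\eta\boldsymbol{\tilde{u}}_h)$ are not polynomials because $\eta$ is not. The paper freezes the coefficient at its element mean $\hat\eta:=\dashint_K\eta\dx$: one replaces $\eta$ by $\hat\eta$ at the cost of $\norm{\eta-\hat\eta}{L^\infty(K)}\le Ch_K/\delta$ (these errors land in the $h_K\delta^{-2}\lnorm{\boldsymbol{\tilde{u}}_h}{K}$ term), applies the inverse estimate to the polynomial $\hat\eta\,\partial_i\boldsymbol{\tilde{u}}_h$, uses $\hat\eta\cu\boldsymbol{\tilde{u}}_h=\cu(\hat\eta\boldsymbol{\tilde{u}}_h)$, and finally compares $\cu(\hat\eta\boldsymbol{\tilde{u}}_h)$ with $\cu(\eta\boldsymbol{\tilde{u}}_h)$ in $L^2(K)$ via the product rule applied to $(\hat\eta-\eta)\boldsymbol{\tilde{u}}_h$. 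Without this frozen-coefficient step (or an equivalent Taylor/averaging argument executed at that point) the stated estimate is not established. A smaller inaccuracy: your premise that $\Sigma_h$ reproduces all componentwise affine fields is false --- the lowest-order N\'ed\'elec space on $K$ is $\{\boldsymbol{a}+\boldsymbol{b}\times\boldsymbol{x}\}$, a proper subspace of $(\mathcal{P}_1(K))^3$ --- but that detour is also unnecessary, since the interpolation estimate can be applied directly to $\eta^2\boldsymbol{\tilde{u}}_h$, as you do elsewhere and as the paper does.
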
   
\begin{proof}
Note that the interpolant $\Sigma_h(\eta^2\boldsymbol{\tilde{u}}_h)$ is well defined. By \cite[Proposition 2.5.6]{boffi2013mixed} there exists a constant $C$, depending on the shape of $K$, such that
	\begin{align}
		\label{pf_sup_1}
		\begin{split}
			&\big\Vert\cu\left[\eta^2\boldsymbol{\tilde{u}}_h - \Sigma_h(\eta^2\boldsymbol{\tilde{u}}_h)\right]\big\Vert_{L^{2}(K)}\\[0.5ex]
			&\leq C h_K |\cu(\eta^2\boldsymbol{\tilde{u}}_h)|_{H^1(K)} \leq C h_K^{5/2} |\cu(\eta^2\boldsymbol{\tilde{u}}_h)|_{W^{1,\infty}(K)}\\
			&\leq C h_K^{5/2} \sum\limits_{i=1}^3 \Big[
			\norm{\eta^2 \cu(\partial_i \boldsymbol{\tilde{u}}_h)}{L^{\infty}(K)}
			+\norm{\nabla(\eta^2)\times\partial_i \boldsymbol{\tilde{u}}_h}{L^{\infty}(K)} \\
			& \qquad +\norm{\partial_i(\eta^2)\cu \boldsymbol{\tilde{u}}_h}{L^{\infty}(K)}
			+\norm{\nabla(\partial_i(\eta^2))\times \boldsymbol{\tilde{u}}_h}{L^{\infty}(K)}
			\Big].
		\end{split}
	\end{align}
	Next, we investigate the four terms appearing on the right hand side above. The first term $\norm{\eta^2 \cu(\partial_i \boldsymbol{\tilde{u}}_h)}{L^{\infty}(K)}$ vanishes, because every component of $\boldsymbol{\tilde{u}}_h$ is a polynomial of degree at most one. The fourth term can be bounded by
	\begin{align}\label{pf_1_term}
		Ch_K^{5/2}\norm{\nabla(\partial_i(\eta^2))\times \boldsymbol{\tilde{u}}_h}{L^{\infty}(K)}
		\leq C \frac{h_K}{\delta^2} \lnorm{\boldsymbol{\tilde{u}}_h}{K}
	\end{align}
	using inverse estimates (cf. \cite[Lemma 12.1]{Ern_2021} and \cite[Lemma 5.17]{monk2003finite}).
	The third term can be treated similarly to the second term, which we consider now. Let us define $\hat\eta := \dashint_K \eta \medspace \dx$. Then $\norm{\eta-\hat\eta}{L^{\infty}(K)}\leq Ch_K|\eta|_{W^{1,\infty}(K)}\leq C\frac{h_K}{\delta}$. Using inverse estimates, we have
	\begin{align}
		\label{pf_sup_2}
		\begin{split}
			&\norm{\nabla(\eta^2)\times\partial_i \boldsymbol{\tilde{u}}_h}{L^{\infty}(K)}
			\leq 2 \norm{\nabla\eta}{L^{\infty}(K)}\norm{\eta\partial_i \boldsymbol{\tilde{u}}_h}{L^{\infty}(K)}\\
			& \leq \frac{C}{\delta}\left(\norm{(\eta -\hat\eta)\partial_i \boldsymbol{\tilde{u}}_h}{L^{\infty}(K)} 
			+ \norm{\hat\eta\partial_i \boldsymbol{\tilde{u}}_h}{L^{\infty}(K)}\right)\\
			& \leq C  \frac{h_K}{\delta^{2}}\norm{\partial_i \boldsymbol{\tilde{u}}_h}{L^{\infty}(K)} 
			+ \frac{C}{\delta}\norm{\hat\eta\cu \boldsymbol{\tilde{u}}_h}{L^{\infty}(K)}\\
			& \leq C  \frac{h_K^{-3/2}}{\delta^{2}} \norm{\boldsymbol{\tilde{u}}_h}{L^{2}(K)} 
			+ C\frac{h_K^{-3/2}}{\delta}\left(\lnorm{\cu [(\hat\eta-\eta)\boldsymbol{\tilde{u}}_h]}{K}
			+ \lnorm{\cu (\eta \boldsymbol{\tilde{u}}_h)}{K}\right)\\
			& \leq C  \frac{h_K^{-3/2}}{\delta^{2}} \norm{\boldsymbol{\tilde{u}}_h}{L^{2}(K)} 
			+ C\frac{h_K^{-3/2}}{\delta}\lnorm{\cu (\eta \boldsymbol{\tilde{u}}_h)}{K}.
		\end{split}
	\end{align}
	In the third inequality of (\ref{pf_sup_2}), we used the fact that there is a constant $C$, independent of $K$ and $\boldsymbol{\tilde{u}}_h$, such that 
	\begin{align*}
		\norm{\partial_i \boldsymbol{\tilde{u}}_h}{L^{\infty}(K)} \leq C \norm{\cu \boldsymbol{\tilde{u}}_h}{L^{\infty}(K)}.
	\end{align*} 
	To see this, we note that $\boldsymbol{\tilde{u}}_h$ is of the form 
	\begin{align*}
		\boldsymbol{\tilde{u}}_h = a_1 {\bm e}_1 + a_2 {\bm e}_2 + a_3 {\bm e}_3 + b_1 (0,z,-y)^T +b_2 (-z,0,x)^T + b_3(y,-x,0)^T
	\end{align*}
	where ${\bm e}_1,{\bm e}_2,{\bm e}_3$ is the canonical basis of $\mathbb{R}^3$ and $a_i,b_i\in\mathbb{R}$. Then
	\begin{align*}
		\norm{\cu \boldsymbol{\tilde{u}}_h}{L^{\infty}(K)} = 2 (|b_1|+|b_2|+|b_3|),
	\end{align*} 
	and clearly
	\begin{equation*}
		\norm{\partial_i \boldsymbol{\tilde{u}}_h}{L^{\infty}(K)} \leq C (|b_1|+|b_2|+|b_3|).
	\end{equation*}
	In the last inequality of (\ref{pf_sup_2}), we used the product rule and an inverse estimate to deduce 
	\begin{align*}
		\lnorm{\cu [(\hat\eta-\eta)\boldsymbol{\tilde{u}}_h]}{K}
		&\leq \left( |\eta|_{W^{1,\infty}(K)}\lnorm{\boldsymbol{\tilde{u}}_h}{K}
		+ \norm{\hat\eta-\eta}{L^{\infty}(K)}\lnorm{\cu\boldsymbol{\tilde{u}}_h}{K} \right)\\
		&\leq C\left(\delta^{-1}\lnorm{\boldsymbol{\tilde{u}}_h}{K}
		+ h_K \delta^{-1}\lnorm{\cu\boldsymbol{\tilde{u}}_h}{K} \right)\\
		&\leq C\delta^{-1}\lnorm{\boldsymbol{\tilde{u}}_h}{K}.
	\end{align*}
	Treating the third term of (\ref{pf_sup_1}) similarly as above and inserting (\ref{pf_1_term}) and (\ref{pf_sup_2}) into (\ref{pf_sup_1}), we prove the lemma. 
\end{proof}

With the superapproximation result in hand, now we can prove the discrete Caccioppoli inequality.
	
	\begin{lemma}
		\label{CaccioDisc}
		Let $\omega\subset\omega^*$ be subdomains of $\Omega$ with $\delta:=\operatorname{dist}(\omega,\partial\omega^*\setminus\partial\Omega)>0$. In addition, let $\max\limits_{K\cap\omega^*\neq\emptyset}h_K\leq \textcolor{black}{\delta}$. Then, for each $\boldsymbol{\tilde{u}}_h:=\Sigma_h \boldsymbol{\tilde{u}}$ with $\boldsymbol{\tilde{u}}\in\boldsymbol{\tilde{W}}_h(\omega^*)$,
		\begin{equation*}
			\lnorm{\cu \boldsymbol{\tilde{u}}_h}{\omega;a}
			\leq C\delta^{-1} \lnorm{{\boldsymbol{\tilde{u}}_h}}{\omega^*},
		\end{equation*}
		where $C>0$ depends on $\alpha_0,\alpha_1$ and the shape-regularity of the mesh, but not on $h$. 
	\end{lemma}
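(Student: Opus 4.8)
The plan is to mimic the standard discrete Caccioppoli argument (as in \cite{ma2022error,demlow2011local}), with the N\'ed\'elec interpolant playing the role that the Lagrange interpolant plays in the scalar case. Fix a cut-off $\eta\in C^\infty(\omega^*)$ with $\eta\equiv 1$ on $\omega$, $\eta\equiv 0$ on $\partial\omega^*\setminus\partial\Omega$ (extended suitably so that $\eta=0$ near the interior boundary), and $|\eta|_{W^{j,\infty}(\omega^*)}\leq C\delta^{-j}$ for $0\leq j\leq 2$; this is possible since $\operatorname{dist}(\omega,\partial\omega^*\setminus\partial\Omega)=\delta>0$. Since $\eta\equiv 1$ on $\omega$, it suffices to bound $\lnorm{\cu(\eta\boldsymbol{\tilde u}_h)}{\omega^*;a}$ from above. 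I would expand, exactly as in the proof of \cref{Caccio} but now keeping the interpolation error terms,
\begin{align*}
\lnorm{\cu(\eta\boldsymbol{\tilde u}_h)}{\omega^*;a}^2
&= a_{\omega^*}\big(\cu(\eta\boldsymbol{\tilde u}_h),\cu(\eta\boldsymbol{\tilde u}_h)\big)\\
&= a_{\omega^*}\big(\nabla\eta\times\boldsymbol{\tilde u}_h,\nabla\eta\times\boldsymbol{\tilde u}_h\big)
- a_{\omega^*}\big(\eta\cu\boldsymbol{\tilde u}_h,\nabla\eta\times\boldsymbol{\tilde u}_h\big)\\
&\quad + a_{\omega^*}\big(\nabla\eta\times\boldsymbol{\tilde u}_h,\cu(\eta^2\boldsymbol{\tilde u}_h)-\nabla\eta\times\boldsymbol{\tilde u}_h\big)
+ a_{\omega^*}\big(\cu\boldsymbol{\tilde u}_h,\cu(\eta^2\boldsymbol{\tilde u}_h)\big),
\end{align*}
after using the algebraic identity $\cu(\eta^2\boldsymbol{\tilde u}_h)=\nabla(\eta^2)\times\boldsymbol{\tilde u}_h+\eta^2\cu\boldsymbol{\tilde u}_h=2\eta\,\nabla\eta\times\boldsymbol{\tilde u}_h+\eta^2\cu\boldsymbol{\tilde u}_h$ to reorganise. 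The point is that the last term $a_{\omega^*}(\cu\boldsymbol{\tilde u}_h,\cu(\eta^2\boldsymbol{\tilde u}_h))$ does \emph{not} vanish, because $\eta^2\boldsymbol{\tilde u}_h$ is not in the finite element space $\boldsymbol{W}_h(\omega^*)$ — but $\Sigma_h(\eta^2\boldsymbol{\tilde u}_h)$ is, and $\cu\boldsymbol{\tilde u}_h=\cu\Sigma_h\boldsymbol{\tilde u}=\Pi_h\cu\boldsymbol{\tilde u}\in\boldsymbol{V}_h(\omega^*)$ is divergence-free by the commuting property (\ref{commuting-property}); moreover $\Sigma_h(\eta^2\boldsymbol{\tilde u}_h)$ has curl vanishing near $\partial\omega^*\cap\Omega$, so $\cu\Sigma_h(\eta^2\boldsymbol{\tilde u}_h)\in\boldsymbol{K}_{h,0}(\omega^*)$, and hence $a_{\omega^*}(\cu\boldsymbol{\tilde u},\cu\Sigma_h(\eta^2\boldsymbol{\tilde u}_h))=0$ by definition of $\boldsymbol{W}_h(\omega^*)$. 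Therefore
\[
a_{\omega^*}\big(\cu\boldsymbol{\tilde u}_h,\cu(\eta^2\boldsymbol{\tilde u}_h)\big)
= a_{\omega^*}\big(\cu\boldsymbol{\tilde u}_h,\ \cu(\eta^2\boldsymbol{\tilde u}_h)-\cu\Sigma_h(\eta^2\boldsymbol{\tilde u}_h)\big),
\]
which is precisely the quantity controlled by \cref{superapproximation}.

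**Estimating the terms.** Writing $T:=\lnorm{\cu(\eta\boldsymbol{\tilde u}_h)}{\omega^*;a}$, Cauchy–Schwarz in the weighted inner product together with $\alpha_0<A<\alpha_1$ gives: the first term is $\leq \|\nabla\eta\|_{L^\infty}^2\lnorm{\boldsymbol{\tilde u}_h}{\omega^*;a}^2\leq C\delta^{-2}\lnorm{\boldsymbol{\tilde u}_h}{\omega^*}^2$ (note $\eta\cu\boldsymbol{\tilde u}_h$ needs no interpolation — $\eta^2\cu\boldsymbol{\tilde u}_h$ is fine since $\cu\boldsymbol{\tilde u}_h\in\boldsymbol{V}_h$ already, but actually we only used $\nabla\eta\times\boldsymbol{\tilde u}_h$ here); the second term is $\leq \|\eta\cu\boldsymbol{\tilde u}_h\|_{L^2(\omega^*;a)}\|\nabla\eta\times\boldsymbol{\tilde u}_h\|_{L^2(\omega^*;a)}$, where I would bound $\eta\cu\boldsymbol{\tilde u}_h$ by relating it back to $\cu(\eta\boldsymbol{\tilde u}_h)$ via $\eta\cu\boldsymbol{\tilde u}_h=\cu(\eta\boldsymbol{\tilde u}_h)-\nabla\eta\times\boldsymbol{\tilde u}_h$, getting $\leq (T+C\delta^{-1}\lnorm{\boldsymbol{\tilde u}_h}{\omega^*})\cdot C\delta^{-1}\lnorm{\boldsymbol{\tilde u}_h}{\omega^*}$. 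For the superapproximation terms, I would sum \cref{superapproximation} over all $K$ meeting $\omega^*$ (valid since $h_K\leq\delta$ there), obtaining
\[
\lnorm{\cu(\eta^2\boldsymbol{\tilde u}_h)-\cu\Sigma_h(\eta^2\boldsymbol{\tilde u}_h)}{\omega^*}
\leq C\Big(\tfrac{h}{\delta}\lnorm{\cu(\eta\boldsymbol{\tilde u}_h)}{\omega^*}+\tfrac{h}{\delta^2}\lnorm{\boldsymbol{\tilde u}_h}{\omega^*}\Big)
\leq C\big(\alpha_0^{-1/2}\tfrac{h}{\delta}T+\tfrac{h}{\delta^2}\lnorm{\boldsymbol{\tilde u}_h}{\omega^*}\big),
\]
with $h:=\max_{K\cap\omega^*\neq\emptyset}h_K$. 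Then the third and fourth terms of the expansion are each $\leq \big(T+C\delta^{-1}\lnorm{\boldsymbol{\tilde u}_h}{\omega^*}\big)\cdot C\big(\tfrac{h}{\delta}T+\tfrac{h}{\delta^2}\lnorm{\boldsymbol{\tilde u}_h}{\omega^*}\big)$, up to the $\alpha_0,\alpha_1$ factors.

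**Absorption and conclusion.** Collecting everything, I arrive at an inequality of the shape
\[
T^2 \leq C_1\big(\tfrac{h}{\delta}\big)T^2 + C_2\,\delta^{-1}\lnorm{\boldsymbol{\tilde u}_h}{\omega^*}\,T + C_3\,\delta^{-2}\lnorm{\boldsymbol{\tilde u}_h}{\omega^*}^2,
\]
with $C_1,C_2,C_3$ depending only on $\alpha_0,\alpha_1$ and the shape-regularity. The plan is to require $h$ small enough, say $h\leq \delta/(2C_1)$ — note the hypothesis of the lemma is $h_K\leq\delta$, so strictly I would either strengthen the smallness assumption to ``$h$ small enough'' (consistent with the ``$h$ small enough'' already present in \cref{theorem_nearly_exponential_disc}) or absorb more carefully — to move the first term to the left, then apply Young's inequality $C_2\delta^{-1}\lnorm{\boldsymbol{\tilde u}_h}{\omega^*}T\leq \tfrac14 T^2+C_2^2\delta^{-2}\lnorm{\boldsymbol{\tilde u}_h}{\omega^*}^2$ to absorb the cross term, leaving $T^2\leq C\delta^{-2}\lnorm{\boldsymbol{\tilde u}_h}{\omega^*}^2$, i.e. $T\leq C\delta^{-1}\lnorm{\boldsymbol{\tilde u}_h}{\omega^*}$. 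Since $\cu\boldsymbol{\tilde u}_h$ is divergence-free, $\lnorm{\cu\boldsymbol{\tilde u}_h}{\omega;a}=\lnorm{\cu\boldsymbol{\tilde u}_h}{\omega;a}\leq\lnorm{\cu(\eta\boldsymbol{\tilde u}_h)}{\omega^*;a}=T$ (using $\eta\equiv1$ on $\omega$), which gives the claimed bound. The main obstacle — and the part that needs the most care — is the bookkeeping of the $\eta$ versus $\eta^2$ factors so that every non-finite-element object appearing inside $a_{\omega^*}(\cu\boldsymbol{\tilde u},\cdot)$ is actually $\Sigma_h$ of something in the finite element space with curl supported away from the interior boundary (so that the Galerkin orthogonality defining $\boldsymbol{W}_h(\omega^*)$ genuinely kills it), and making sure the smallness of $h/\delta$ needed for absorption is compatible with the hypotheses; the rest is routine inverse-estimate manipulation already packaged in \cref{superapproximation}.
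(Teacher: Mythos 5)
Your overall strategy is the paper's: cut off with $\eta$, isolate the term $a_{\omega^*}(\cu\boldsymbol{\tilde u}_h,\cu(\eta^2\boldsymbol{\tilde u}_h))$, use the commuting property to see $\cu\boldsymbol{\tilde u}_h\in\boldsymbol{W}_h(\omega^*)$ and $\cu\Sigma_h(\eta^2\boldsymbol{\tilde u}_h)\in\boldsymbol{K}_{h,0}(\omega^*)$, kill the interpolated term by the orthogonality defining $\boldsymbol{W}_h(\omega^*)$, and control the remainder by \cref{superapproximation}. Two points, however, do not survive scrutiny. The first is minor: your four-term expansion of $\lnorm{\cu(\eta\boldsymbol{\tilde u}_h)}{\omega^*;a}^2$ is not an identity. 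Expanding $\cu(\eta\boldsymbol{\tilde u}_h)=\nabla\eta\times\boldsymbol{\tilde u}_h+\eta\cu\boldsymbol{\tilde u}_h$ and using symmetry of $a_{\omega^*}$, the cross terms cancel exactly and one is left with
\begin{equation*}
\lnorm{\cu(\eta\boldsymbol{\tilde u}_h)}{\omega^*;a}^2
= a_{\omega^*}(\nabla\eta\times\boldsymbol{\tilde u}_h,\nabla\eta\times\boldsymbol{\tilde u}_h)
+ a_{\omega^*}(\cu\boldsymbol{\tilde u}_h,\cu(\eta^2\boldsymbol{\tilde u}_h));
\end{equation*}
the extra terms you then estimate (e.g.\ $a_{\omega^*}(\eta\cu\boldsymbol{\tilde u}_h,\nabla\eta\times\boldsymbol{\tilde u}_h)$) are artifacts of a miswritten decomposition. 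That is repairable.

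The genuine gap is in how you bound $a_{\omega^*}\big(\cu\boldsymbol{\tilde u}_h,\,\cu(\eta^2\boldsymbol{\tilde u}_h)-\cu\Sigma_h(\eta^2\boldsymbol{\tilde u}_h)\big)$. You apply a global Cauchy--Schwarz and then replace the factor $\lnorm{\cu\boldsymbol{\tilde u}_h}{\omega^*;a}$ by $T+C\delta^{-1}\lnorm{\boldsymbol{\tilde u}_h}{\omega^*}$ with $T=\lnorm{\cu(\eta\boldsymbol{\tilde u}_h)}{\omega^*;a}$. This is false: since $\eta$ vanishes near $\partial\omega^*\cap\Omega$, the identity $\eta\cu\boldsymbol{\tilde u}_h=\cu(\eta\boldsymbol{\tilde u}_h)-\nabla\eta\times\boldsymbol{\tilde u}_h$ only controls the $\eta$-weighted curl, and $T$ gives no control of $\cu\boldsymbol{\tilde u}_h$ in the region where $\eta$ is small; so your ``collected'' inequality does not follow from the previous steps. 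The obvious patch, a global inverse estimate $\lnorm{\cu\boldsymbol{\tilde u}_h}{\omega^*}\leq C h_{\min}^{-1}\lnorm{\boldsymbol{\tilde u}_h}{\omega^*}$, would require quasi-uniformity (only shape-regularity is assumed) and still leaves you with the $h/\delta$-smallness you yourself flag, which is stronger than the stated hypothesis $\max_{K\cap\omega^*\neq\emptyset}h_K\leq\delta$. The paper closes the argument differently and more sharply: it keeps the superapproximation bound elementwise, pairs it on each $K$ with $h_K\lnorm{\cu\boldsymbol{\tilde u}_h}{K}$, and invokes the local inverse estimate $h_K\lnorm{\cu\boldsymbol{\tilde u}_h}{K}\leq C\lnorm{\boldsymbol{\tilde u}_h}{K}$ for N\'ed\'elec functions. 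This simultaneously eliminates the factor $h_K$ (so Young's inequality absorbs $\tfrac12\lnorm{\cu(\eta\boldsymbol{\tilde u}_h)}{\omega^*;a}^2$ with no smallness condition on $h/\delta$) and avoids bounding the unweighted $\cu\boldsymbol{\tilde u}_h$ by $T$ at all. With that elementwise treatment your argument reduces to the paper's proof; as written, the absorption step does not go through under the hypotheses of the lemma.
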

	\begin{proof}
		Let $\eta\in C^{\infty}(\omega^*)$ be a cut-off function that satisfies $0\leq\eta\leq 1$ and
		\begin{equation*}
			\eta=1 \;\;\text{ in } \omega,
			\quad\eta=0 \;\;\text{ on } \partial\omega^*\setminus \partial\Omega,
			\quad  |\eta|_{W^{j,\infty}(\omega^*)}\leq C\delta^{-j},
			\quad 0\leq j\leq 2.
		\end{equation*}
		A direct calculation shows
		\begin{align}\label{discrete-identity}
			\lnorm{\cu (\eta \boldsymbol{\tilde{u}}_h)}{\omega^*;a}^2
			= a_{\omega^{\ast}}(\nabla\eta\times{\boldsymbol{\tilde{u}}_h},\nabla\eta\times{\boldsymbol{\tilde{u}}_h}) 
			+ a_{\omega^{\ast}}(\cu{\boldsymbol{\tilde{u}}_h},\cu(\eta^2{\boldsymbol{\tilde{u}}_h})),
		\end{align}
		and we only need to consider the second term on the right-hand side. Using the commuting property (\ref{commuting-property}) yields
		\begin{align}\label{curl-harmonic}
			\cu \boldsymbol{\tilde{u}}_h = \cu \boldsymbol{\tilde{u}} \in \boldsymbol{W}_h(\omega^*).
		\end{align}
		Next we show that $\cu \Sigma_h(\eta^2 \boldsymbol{\tilde{u}}_h)  \in \boldsymbol{K}_{h,0}(\omega^*)$. Only the vanishing of the normal trace needs some comments. Because $\boldsymbol{\tilde{u}}$ vanishes on $\partial\Omega\cap\partial\omega^*$, ${\bm \nu}\times \Sigma_h \boldsymbol{\tilde{u}} = {\bm 0}$ on $\partial\Omega\cap\partial\omega^*$ (see \cite[Lemma 5.35]{monk2003finite}), where ${\bm \nu}$ denotes the unit outward normal. Noting that $\eta = 0$ on $\partial \omega^{\ast}\cap \Omega$, we further have ${\bm \nu}\times \eta^{2}\boldsymbol{\tilde{u}}_{h} = {\bm \nu}\times \Sigma_{h}(\eta^{2}\boldsymbol{\tilde{u}}_{h}) = {\bm 0}$ on $\partial \omega^{\ast}$. Therefore, $\cu \Sigma_h(\eta^2 \boldsymbol{\tilde{u}}_h) \in \boldsymbol{V}_{h,0}(\omega^*)$ (see \cite[Proposition 2.3.6]{boffi2013mixed}). Using (\ref{curl-harmonic}) and the definition of $\boldsymbol{W}_h(\omega^*)$ gives
  \begin{equation}\label{inter-harmonic}
  a_{\omega^{\ast}}(\cu{\boldsymbol{\tilde{u}}_h},\cu\Sigma_{h}(\eta^2{\boldsymbol{\tilde{u}}_h})) = 0.    
  \end{equation}  
It follows from (\ref{inter-harmonic}), \cref{superapproximation}, an inverse estimate that
		\begin{align*}
			& a_{\omega^{\ast}}(\cu{\boldsymbol{\tilde{u}}_h},\cu(\eta^2{\boldsymbol{\tilde{u}}_h})) = a_{\omega^{\ast}}(\cu{\boldsymbol{\tilde{u}}_h}, \cu(\eta^2{\boldsymbol{\tilde{u}}_h}) - \cu \Sigma_h(\eta^2{\boldsymbol{\tilde{u}}_h}) )\\
			&\leq C \sum_{K\cap\omega^*\neq\emptyset}h_K\lnorm{\cu \boldsymbol{\tilde{u}}_h}{K}\left(\delta^{-1}\lnorm{\cu(\eta \boldsymbol{\tilde{u}}_h)}{K} + \delta^{-2}\lnorm{\boldsymbol{\tilde{u}}_h}{K}\right)\\
			&\leq C \sum_{K\cap\omega^*\neq\emptyset}\lnorm{\boldsymbol{\tilde{u}}_h}{K}\left(\delta^{-1}\lnorm{\cu(\eta \boldsymbol{\tilde{u}}_h)}{K} + \delta^{-2}\lnorm{\boldsymbol{\tilde{u}}_h}{K}\right)\\
			&\leq C \delta^{-2} \lnorm{\boldsymbol{\tilde{u}}_h}{\omega^*}^2
			+ \frac{1}{2} \lnorm{\cu(\eta \boldsymbol{\tilde{u}}_h)}{\omega^*;a}^2.
		\end{align*}
Inserting the estimate above into (\ref{discrete-identity}) yields the desired result.
	\end{proof}

Next we prove a discrete analogue of \cref{l2_approx_vecpot}. Since the Caccioppoli inequality was proved for the interpolants, we need to modify the weak approximation estimate accordingly, so that they can be combined and used iteratively for proving \cref{theorem_nearly_exponential_disc}.

	\begin{lemma}
		\label{l2_approx_vecpotDisc}
Let $m\in\mathbb{N}$. There exists an $m$-dimensional space $\tilde{R}_{h,m}(\omega^*)\subset \boldsymbol{\tilde{W}}_h(\omega^*)$, such that for any $\boldsymbol{\tilde{u}}\in \boldsymbol{\tilde{W}}_h(\omega^*)$,
		\begin{align}
			\inf\limits_{\boldsymbol{\tilde{v}}\in \tilde{R}_{h,m}(\omega^*)}
			\lnorm{\Sigma_h \boldsymbol{\tilde{u}} - \Sigma_h \boldsymbol{\tilde{v}}}{\omega^*}\leq Cm^{-1/d}|\omega^*|^{1/d}|\boldsymbol{\tilde{u}}|_{H^1(\omega^{\ast})},
		\end{align}
		 where $C>0$ depends on the shape of $\omega^{\ast}$ and the shape-regularity of the mesh $\mathcal{T}_{h}$.
	\end{lemma}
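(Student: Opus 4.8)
## Proof proposal for Lemma~\ref{l2_approx_vecpotDisc}

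The plan is to mimic the proof of the continuous weak approximation estimate (\cref{l2_approx_vecpot}), but to carry it out on the finite-dimensional space $\boldsymbol{\tilde{W}}_h(\omega^*)$ while keeping track of the N\'ed\'elec interpolant $\Sigma_h$ that appears on both sides of the claimed inequality. The key point is that $\boldsymbol{\tilde{W}}_h(\omega^*)\subset (H^1_N(\omega^*))^d$ genuinely has $H^1$-regularity (it is the image of $\boldsymbol{W}_h(\omega^*)$ under the lifting operator $\mathcal{S}_N$ of \cref{exVecPotnew}), so the purely $H^1$-based argument still applies. First I would introduce the subspace of vector potentials with componentwise vanishing mean,
\begin{equation*}
\boldsymbol{\tilde{W}}_h^{0}(\omega^*) := \big\{\boldsymbol{\tilde{u}}\in \boldsymbol{\tilde{W}}_h(\omega^*) : (\tilde u_i,1)_{L^2(\omega^*)}=0,\; i=1,\dots,d\big\},
\end{equation*}
so that, modulo the finite-dimensional space $\mathbb{R}^d$ of constants, it suffices to approximate elements of $\boldsymbol{\tilde{W}}_h^{0}(\omega^*)$. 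On this subspace the Poincar\'e inequality makes $|\cdot|_{H^1(\omega^*)}$ a norm, and I would set up the eigenproblem
\begin{equation*}
(\boldsymbol{\tilde\phi},\boldsymbol{\tilde v})_{L^2(\omega^*)} = \lambda\,(\nabla\boldsymbol{\tilde\phi},\nabla\boldsymbol{\tilde v})_{L^2(\omega^*)}\quad\forall\,\boldsymbol{\tilde v}\in\boldsymbol{\tilde{W}}_h^{0}(\omega^*),
\end{equation*}
whose leading $m-d$ eigenfunctions span, by \cite[Theorem 2.5]{pinkus2012n}, the optimal $(m-d)$-dimensional subspace realising the Kolmogorov width $d_{m-d}$ of the compact embedding $(\boldsymbol{\tilde{W}}_h^{0}(\omega^*),|\cdot|_{H^1})\hookrightarrow ((L^2(\omega^*))^d,\|\cdot\|_{L^2})$. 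I would then define $\tilde R_{h,m}(\omega^*)$ to be this span enlarged by $\mathbb{R}^d$, which sits inside $\boldsymbol{\tilde{W}}_h(\omega^*)$ as required and has dimension $m$.

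The next step is to bound the Kolmogorov width. Because $\boldsymbol{\tilde{W}}_h^{0}(\omega^*)\subset \boldsymbol{W}(\omega^*)$, the space of all $H^1$ vector fields with vanishing mean, monotonicity of the width gives $d_{m-d}(\boldsymbol{\tilde{W}}_h^{0})\le d_{m-d}(\boldsymbol{W})$, and classical approximation theory (for instance a finite-element or spectral estimate, exactly as in the proof of \cref{l2_approx_vecpot}) yields $d_{m-d}(\boldsymbol{W})\le C m^{-1/d}|\omega^*|^{1/d}$ with $C$ depending only on the shape of $\omega^*$. Hence for every $\boldsymbol{\tilde u}\in\boldsymbol{\tilde{W}}_h(\omega^*)$ there is $\boldsymbol{\tilde v}\in\tilde R_{h,m}(\omega^*)$ with
\begin{equation*}
\|\boldsymbol{\tilde u}-\boldsymbol{\tilde v}\|_{L^2(\omega^*)}\le C m^{-1/d}|\omega^*|^{1/d}\,|\boldsymbol{\tilde u}|_{H^1(\omega^*)}.
\end{equation*}
Note here that splitting off $\mathbb{R}^d$ lets us use $|\boldsymbol{\tilde u}|_{H^1}=|\boldsymbol{\tilde u}-\boldsymbol{c}|_{H^1}$ for any constant $\boldsymbol{c}$, which is what makes the mean-zero reduction legitimate.

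Finally I would transfer this $L^2$ estimate from $\boldsymbol{\tilde u}-\boldsymbol{\tilde v}$ to $\Sigma_h\boldsymbol{\tilde u}-\Sigma_h\boldsymbol{\tilde v}=\Sigma_h(\boldsymbol{\tilde u}-\boldsymbol{\tilde v})$. Writing $\boldsymbol{\tilde w}:=\boldsymbol{\tilde u}-\boldsymbol{\tilde v}\in\boldsymbol{\tilde{W}}_h(\omega^*)\subset (H^1_N(\omega^*))^d$, the triangle inequality gives
\begin{equation*}
\|\Sigma_h\boldsymbol{\tilde w}\|_{L^2(\omega^*)}\le \|\boldsymbol{\tilde w}-\Sigma_h\boldsymbol{\tilde w}\|_{L^2(\omega^*)}+\|\boldsymbol{\tilde w}\|_{L^2(\omega^*)},
\end{equation*}
and I would bound the interpolation error termwise over elements by $\|\boldsymbol{\tilde w}-\Sigma_h\boldsymbol{\tilde w}\|_{L^2(K)}\le C h_K |\boldsymbol{\tilde w}|_{H^1(K)}$ (the standard N\'ed\'elec interpolation estimate, e.g. \cite[Proposition~2.5.6]{boffi2013mixed} together with \cite[Lemma~5.38]{monk2003finite} for well-definedness), summing to $C h\,|\boldsymbol{\tilde w}|_{H^1(\omega^*)}\le C |\omega^*|^{1/d}|\boldsymbol{\tilde w}|_{H^1(\omega^*)}$ since $h\lesssim|\omega^*|^{1/d}$. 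It remains to control $|\boldsymbol{\tilde w}|_{H^1(\omega^*)}=|\boldsymbol{\tilde u}-\boldsymbol{\tilde v}|_{H^1(\omega^*)}$ by $|\boldsymbol{\tilde u}|_{H^1(\omega^*)}$; this follows because the optimal-subspace approximation is an orthogonal projection in the $|\cdot|_{H^1}$ inner product on $\boldsymbol{\tilde{W}}_h^0(\omega^*)$ (after the constant is subtracted), hence $|\boldsymbol{\tilde u}-\boldsymbol{\tilde v}|_{H^1}\le |\boldsymbol{\tilde u}|_{H^1}$. Combining the three bounds gives the claim, with $C$ depending on the shape of $\omega^*$ and the shape-regularity of $\mathcal{T}_h$ but not on $h$.

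I expect the main subtlety to be the bookkeeping around $\Sigma_h$: one must check that $\Sigma_h$ is well defined on $\boldsymbol{\tilde{W}}_h(\omega^*)$ (this is exactly \cite[Lemma~5.38]{monk2003finite}, used already in the excerpt), that the mean-zero reduction is compatible with applying $\Sigma_h$ (it is, because $\Sigma_h$ is linear and the estimate only involves the difference $\boldsymbol{\tilde u}-\boldsymbol{\tilde v}$), and that the interpolation error estimate on elements does not cost an extra factor that ruins the $m^{-1/d}$ rate — it does not, since $h\le C|\omega^*|^{1/d}$ absorbs the mesh size. Everything else is a routine adaptation of the continuous argument.
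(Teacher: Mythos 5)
Your overall strategy (an $H^1\!\to L^2$ Kolmogorov-width bound on the discrete potential space combined with the N\'ed\'elec interpolation error) is the same one the paper uses, but two steps in your write-up do not hold as stated. The critical one is the final absorption of the interpolation error. After the triangle inequality you are left with the term $\lnorm{\boldsymbol{\tilde w}-\Sigma_h\boldsymbol{\tilde w}}{\omega^*}\leq Ch\,|\boldsymbol{\tilde w}|_{H^1(\omega^*)}\leq Ch\,|\boldsymbol{\tilde u}|_{H^1(\omega^*)}$, and you absorb it using only $h\leq C|\omega^*|^{1/d}$. That yields the bound $C\big(h+m^{-1/d}|\omega^*|^{1/d}\big)|\boldsymbol{\tilde u}|_{H^1(\omega^*)}$, whose first contribution is of order $|\omega^*|^{1/d}|\boldsymbol{\tilde u}|_{H^1(\omega^*)}$ with no decay in $m$ — so the claimed estimate does not follow; the rate \emph{is} ruined by this step. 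What is needed is $h\leq Cm^{-1/d}|\omega^*|^{1/d}$, which is true for a different reason: one may assume $m\leq\dim\boldsymbol{\tilde{W}}_h(\omega^*)\leq\dim\boldsymbol{V}_h(\omega^*)\leq C|\omega^*|h^{-d}$ (otherwise take $\tilde R_{h,m}\supseteq\boldsymbol{\tilde{W}}_h(\omega^*)$ and the infimum vanishes), whence $h\leq Cm^{-1/d}|\omega^*|^{1/d}$. This is exactly the observation with which the paper closes its proof, and your argument is incomplete without it.

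The second problem is the mean-zero reduction. Unlike the continuous space $\boldsymbol{\tilde H}_a(\omega^*)$, the discrete potential space is the image $\boldsymbol{\tilde{W}}_h(\omega^*)=\mathcal{S}_{N}(\boldsymbol{W}_h(\omega^*))$ of a linear right inverse of the curl, so a constant $\boldsymbol{c}$ lying in it satisfies $\boldsymbol{c}=\mathcal{S}_{N}(\cu\,\boldsymbol{c})=\mathcal{S}_{N}(\boldsymbol 0)=\boldsymbol 0$. Hence $\mathbb{R}^d\cap\boldsymbol{\tilde{W}}_h(\omega^*)=\{\boldsymbol 0\}$: your space $\tilde R_{h,m}=\boldsymbol V^{\rm opt}_{m-d}\oplus\mathbb{R}^d$ is \emph{not} contained in $\boldsymbol{\tilde{W}}_h(\omega^*)$ as the lemma requires, and the decomposition $\boldsymbol{\tilde{W}}_h(\omega^*)=\boldsymbol{\tilde{W}}_h^{0}(\omega^*)\oplus\mathbb{R}^d$ underlying your reduction fails, so the eigenfunction estimate does not apply to a general $\boldsymbol{\tilde u}$. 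The same observation shows no splitting is needed: $|\cdot|_{H^1(\omega^*)}$ is automatically a norm on $\boldsymbol{\tilde{W}}_h(\omega^*)$. The paper avoids both issues by working directly with the finite-rank operator $L=\Sigma_h|_{\boldsymbol{\tilde{W}}_h(\omega^*)}$: its width is bounded by inserting an arbitrary $m$-dimensional $L^2$-approximation space $Y(m)$ together with $\lnorm{\Sigma_h\boldsymbol{\tilde u}-\boldsymbol{\tilde u}}{\omega^*}\leq Ch|\boldsymbol{\tilde u}|_{H^1(\omega^*)}$ and $h\leq Cm^{-1/d}|\omega^*|^{1/d}$, and $\tilde R_{h,m}$ is taken to be the span of the right singular vectors of $L$, which lies in $\boldsymbol{\tilde{W}}_h(\omega^*)$ by construction. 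Your eigenproblem route can be repaired along the same lines (pose it on all of $\boldsymbol{\tilde{W}}_h(\omega^*)$ and bound the width against $Y(m)$ rather than by subset monotonicity), but as written both the containment claim and the rate-preserving absorption step fail.
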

	\begin{proof}
	For convenience, we assume that $|\cdot|_{H^{1}(\omega^{\ast})}$ is a norm on $\boldsymbol{\tilde{W}}_h(\omega^*)$. Otherwise we can remove the kernel space similarly as in \cref{l2_approx_vecpot}. Define the linear operator 
\begin{equation*}
L=\Sigma_h|_{\boldsymbol{\tilde{W}}_h(\omega^*)}
			: \big(\boldsymbol{\tilde{W}}_h(\omega^*),|\cdot|_{H^{1}(\omega^{\ast})}\big) \to \big((L^2(\omega^*))^3, \lnorm{\cdot}{\omega^*}\big).    
\end{equation*}
  Clearly, $L$ is compact because it has finite dimensional rank. We denote the Kolmogorov width of $L$ by $d_m(L)$. To estimate $d_m(L)$, we need two auxiliary results. First, we note that there exists an $m$-dimensional space $Y(m)\subset (L^2(\omega^*))^3$ (e.g. finite element space) such that
		\begin{align}\label{L2_error_estimate}
			\inf _{\boldsymbol{\tilde{v}} \in Y(m)} \lnorm{\boldsymbol{\tilde{u}}-\boldsymbol{\tilde{v}}}{ \omega^*} \leq Cm^{-1/d}|\omega^*|^{1/d} |\boldsymbol{\tilde{u}}|_{H^{1}(\omega^{*})} \quad \forall \boldsymbol{\tilde{u}} \in (H^1\left(\omega^{*}\right))^3. 
		\end{align}
Second, using the fact that $\nabla \times \boldsymbol{\tilde{W}}_h(\omega^*) \subset \boldsymbol{V}_h(\omega^*)$ and  \cite[Theorem 5.41]{monk2003finite}, we have the following $L^{2}$ error estimate for $\Sigma_{h}$:
\begin{equation}\label{L2-interpolation-error}
\lnorm{\Sigma_h \boldsymbol{\tilde{u}} - \boldsymbol{\tilde{u}}}{\omega^*} \leq Ch |\boldsymbol{\tilde{u}}|_{H^{1}(\omega^{*})}\quad \forall \boldsymbol{\tilde{u}}\in \boldsymbol{\tilde{W}}_h(\omega^*).  
\end{equation}
Now we are ready to estimate $d_m(L)$. Using (\ref{L2_error_estimate}) and (\ref{L2-interpolation-error}) gives 
		\begin{align}
			\begin{split}
				d_{m}(L) 
				&\leq \sup\limits_{\boldsymbol{\tilde{u}}\in \boldsymbol{\tilde{W}}_h(\omega^*)}
				\inf\limits_{\boldsymbol{\tilde{v}}\in Y(m)}
				\frac{\lnorm{\Sigma_h \boldsymbol{\tilde{u}} - \boldsymbol{\tilde{v}}}{\omega^*}}{|\boldsymbol{\tilde{u}}|_{H^{1}(\omega^{*})}}\\
				&\leq \sup\limits_{\boldsymbol{\tilde{u}}\in \boldsymbol{\tilde{W}}_h(\omega^*)}
				\inf\limits_{\boldsymbol{\tilde{v}}\in Y(m)}
				\frac{\lnorm{\Sigma_h \boldsymbol{\tilde{u}} - \boldsymbol{\tilde{u}}}{\omega^*}}{|\boldsymbol{\tilde{u}}|_{H^{1}(\omega^{*})}}
				+\frac{\lnorm{\boldsymbol{\tilde{u}} - \boldsymbol{\tilde{v}}}{\omega^*}}{|\boldsymbol{\tilde{u}}|_{H^{1}(\omega^{*})}}\\
				&\leq C h +Cm^{-1/d}|\omega^*|^{1/d} \leq Cm^{-1/d}|\omega^*|^{1/d},
			\end{split}
		\end{align}
		where we have used the fact that $h\leq Cm^{-1/d}|\omega^*|^{1/d}$ since $m$ would not be larger than the dimension of $\boldsymbol{V}_h(\omega^*)$. Finally, we define $\tilde{R}_{h,m}(\omega^*)$ to be the space spanned by the first $m$ right singular vectors of $L$, and the desired result follows from the characterization of the Kolmogrov $n$-width. 
	\end{proof}

Combining \cref{CaccioDisc,l2_approx_vecpotDisc} and using (\ref{commuting-property}), we can prove a similar intermediate approximation result as \cref{lemma_induction_start} with analogous arguments.
	\begin{lemma}
		\label{lemma_induction_startDisc}
		Let $\omega\subset\omega^*$ be subdomains of $\Omega$ with $\delta:=\operatorname{dist}(\omega,\partial\omega^*\setminus\partial\Omega)>0$, and let $\max\limits_{K\cap\omega^*\neq\emptyset}h_K\leq \textcolor{black}{\delta}$. There exists an $m$-dimensional space $R_{m,h}(\omega^*)\subset \boldsymbol{W}_h(\omega^*)$ such that for all ${\boldsymbol{u}_h}\in \boldsymbol{W}_h(\omega^*)$, 
		\begin{equation*} 
			\inf_{\boldsymbol{v}_h\in R_{h,m}(\omega^*)}\lnorm{{\boldsymbol{u}_h}-\boldsymbol{v}_h}{\omega;a}
			\leq Cm^{-1/d}|\omega^*|^{1/d}\delta^{-1} \lnorm{\boldsymbol{u}_h}{\omega^*;a},
		\end{equation*}
		where $C$ depends on $\alpha_0, \alpha_1$, the shape-regularity of the mesh,and the shape of $\omega^{\ast}$. 
	\end{lemma}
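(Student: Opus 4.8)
The strategy is to mimic the proof of the continuous intermediate estimate \cref{lemma_induction_start}, but working throughout with N\'ed\'elec interpolants of the lifted vector potentials so that the discrete Caccioppoli inequality \cref{CaccioDisc} applies. First I would take ${\boldsymbol{u}_h}\in \boldsymbol{W}_h(\omega^*)$ and lift it to its vector potential $\boldsymbol{\tilde{u}} := \mathcal{S}_{N}({\boldsymbol{u}_h}) \in \boldsymbol{\tilde{W}}_h(\omega^*)$, so that $\cu \boldsymbol{\tilde{u}} = {\boldsymbol{u}_h}$ and, by \cref{exVecPotnew}, $|\boldsymbol{\tilde{u}}|_{H^1(\omega^*)} \leq C \lnorm{{\boldsymbol{u}_h}}{\omega^*} \leq C\alpha_1^{1/2}\lnorm{{\boldsymbol{u}_h}}{\omega^*;a}$. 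By \cref{l2_approx_vecpotDisc} there is an $m$-dimensional space $\tilde{R}_{h,m}(\omega^*)\subset \boldsymbol{\tilde{W}}_h(\omega^*)$ and a $\boldsymbol{\tilde{v}}\in \tilde{R}_{h,m}(\omega^*)$ with
\begin{equation*}
\lnorm{\Sigma_h \boldsymbol{\tilde{u}} - \Sigma_h \boldsymbol{\tilde{v}}}{\omega^*} \leq Cm^{-1/d}|\omega^*|^{1/d}|\boldsymbol{\tilde{u}}|_{H^1(\omega^*)}.
\end{equation*}
The point of having the estimate in terms of the \emph{interpolants} $\Sigma_h \boldsymbol{\tilde{u}}, \Sigma_h \boldsymbol{\tilde{v}}$ rather than $\boldsymbol{\tilde{u}}, \boldsymbol{\tilde{v}}$ themselves is precisely that the discrete Caccioppoli inequality is stated for interpolated vector potentials.

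Next I would apply \cref{CaccioDisc} to the interpolant of the difference. Since $\boldsymbol{\tilde{u}} - \boldsymbol{\tilde{v}} \in \boldsymbol{\tilde{W}}_h(\omega^*)$ and $\Sigma_h$ is linear, setting $\boldsymbol{\tilde{w}}_h := \Sigma_h(\boldsymbol{\tilde{u}} - \boldsymbol{\tilde{v}})$ gives
\begin{equation*}
\lnorm{\cu \boldsymbol{\tilde{w}}_h}{\omega;a} \leq C\delta^{-1}\lnorm{\boldsymbol{\tilde{w}}_h}{\omega^*} = C\delta^{-1}\lnorm{\Sigma_h \boldsymbol{\tilde{u}} - \Sigma_h \boldsymbol{\tilde{v}}}{\omega^*} \leq C\delta^{-1} m^{-1/d}|\omega^*|^{1/d}|\boldsymbol{\tilde{u}}|_{H^1(\omega^*)},
\end{equation*}
where the hypothesis $\max_{K\cap\omega^*\neq\emptyset}h_K\leq\delta$ is used. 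Now the commuting property (\ref{commuting-property}) is the crucial link: $\cu \Sigma_h(\boldsymbol{\tilde{u}}-\boldsymbol{\tilde{v}}) = \cu(\boldsymbol{\tilde{u}}-\boldsymbol{\tilde{v}}) = {\boldsymbol{u}_h} - \cu\boldsymbol{\tilde{v}}$, so $\cu \boldsymbol{\tilde{w}}_h = {\boldsymbol{u}_h} - \boldsymbol{v}_h$ where $\boldsymbol{v}_h := \cu\boldsymbol{\tilde{v}} \in \boldsymbol{V}_h(\omega^*)$. I then need to check that $\boldsymbol{v}_h$ lands in the right space: since $\boldsymbol{\tilde{v}}\in \boldsymbol{\tilde{W}}_h(\omega^*)$, we have $\boldsymbol{v}_h = \cu\boldsymbol{\tilde{v}} \in \cu\boldsymbol{\tilde{W}}_h(\omega^*) = \cu\mathcal{S}_{N}(\boldsymbol{W}_h(\omega^*)) = \boldsymbol{W}_h(\omega^*)$ by the definition of the lifting operator (it returns a potential whose curl is the original field). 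Hence I define $R_{h,m}(\omega^*) := \cu \tilde{R}_{h,m}(\omega^*) \subset \boldsymbol{W}_h(\omega^*)$, which is at most $m$-dimensional, and combine the displayed bounds with $|\boldsymbol{\tilde{u}}|_{H^1(\omega^*)} \leq C\alpha_1^{1/2}\lnorm{{\boldsymbol{u}_h}}{\omega^*;a}$ to obtain
\begin{equation*}
\inf_{\boldsymbol{v}_h\in R_{h,m}(\omega^*)}\lnorm{{\boldsymbol{u}_h}-\boldsymbol{v}_h}{\omega;a} \leq Cm^{-1/d}|\omega^*|^{1/d}\delta^{-1}\lnorm{{\boldsymbol{u}_h}}{\omega^*;a},
\end{equation*}
with $C$ absorbing $\alpha_0, \alpha_1$, the shape regularity, and the shape of $\omega^*$, which is exactly the claim.

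The main obstacle — and the reason this lemma is nontrivial compared to its continuous counterpart — is the bookkeeping around the interpolation operator $\Sigma_h$: one must ensure (i) that $\Sigma_h$ is well defined on $\boldsymbol{\tilde{W}}_h(\omega^*)$ (this is \cite[Lemma 5.38]{monk2003finite}), (ii) that the weak approximation estimate is available in the $\lnorm{\Sigma_h(\cdot)}{}$ seminorm rather than the raw $L^2$ norm (this is \cref{l2_approx_vecpotDisc}, which already builds in the interpolation error estimate (\ref{L2-interpolation-error})), and (iii) that the commuting relation (\ref{commuting-property}) is applied to exactly the right space so that $\cu$ of the interpolated difference equals the error ${\boldsymbol{u}_h}-\boldsymbol{v}_h$. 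Once these pieces are lined up the estimate is a short chain of inequalities; the work has all been pushed into the preceding lemmas (the superapproximation result, the discrete Caccioppoli inequality, and the discrete weak approximation estimate). After this lemma, \cref{theorem_nearly_exponential_disc} follows by iterating (\ref{aux-estimate})'s discrete analogue on a nested family of domains between $\omega$ and $\omega^*$, exactly as in the continuous case and as in \cite{babuska2011optimal,ma2022novel,ma2023wavenumber}.
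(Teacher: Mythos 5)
Your proposal is correct and follows exactly the route the paper intends: lift $\boldsymbol{u}_h$ to a vector potential in $\boldsymbol{\tilde{W}}_h(\omega^*)$ via $\mathcal{S}_N$, approximate using \cref{l2_approx_vecpotDisc}, apply the discrete Caccioppoli inequality \cref{CaccioDisc} to the interpolant of the difference, and use the commuting property (\ref{commuting-property}) together with $\cu\,\boldsymbol{\tilde{W}}_h(\omega^*)=\boldsymbol{W}_h(\omega^*)$ to define $R_{h,m}(\omega^*)=\cu\,\tilde{R}_{h,m}(\omega^*)$. This is precisely the "analogous argument" to \cref{lemma_induction_start} that the paper sketches, with all the interpolation bookkeeping handled correctly.
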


	\begin{proof}[Proof of \cref{theorem_nearly_exponential_disc}]
	For $d=2$, we have an equivalent definition of the $n$-width $d_{h,n}(\omega_i,\omega_i^{\ast})$ as in the continuous case (see \cref{stream-functions-formulation}) by using \cref{lem:discrete_stream_function}. The exponential decay then follows from \cite{ma2022error}. For $d=3$, the proof is identical to that in the continuous case by using \cref{lemma_induction_startDisc}.

	\end{proof}

%
\section{Numerical Experiments}
\label{sec:num}

This section presents numerical experiments to validate the effectiveness of the proposed method and to confirm our theoretical analysis. In the considered examples, the computational domain $\Omega$ is the unit square $(0,1)^2$, and the fine FE mesh with mesh-size $h$ is defined on a uniform Cartesian grid. The domain is first partitioned into $M=m^2$ nonoverlapping subdomains $\{\omega_i^0\}_{i=1}^M$, where $m$ is the number of subdomains in each direction. These domains are then extended by two layers of fine mesh elements to construct the overlapping subdomains $\{\omega_i\}_{i=1}^M$, and the oversampling domains $\omega_i^*$ are obtained by adding $\ell$ layers of fine mesh elements to $\omega_i$. We construct the MS-GFEM solution $(\boldsymbol{u}_h^{\mathrm{G}}, p_h^{\mathrm{G}})$ by selecting $n_{loc}$ eigenfunctions per subdomain for building the local velocity spaces. In addition, homogeneous Dirichlet boundary conditions are imposed in the interior boundary to construct the local particular functions. We denote by $(\boldsymbol{u}^{\mathrm{e}}_h,p^{\mathrm{e}}_h)$ the fine-scale solution computed using the lowest-order Raviart-Thomas elements, and define the relative errors by 
\begin{equation}\label{relative-errors}
	\textbf{error}_v := \frac{\norm{\boldsymbol{u}_h^{\mathrm{G}}-\boldsymbol{u}^{\mathrm{e}}_h}{L^2(\Omega;a)}}{\lnorm{\boldsymbol{u}^{\mathrm{e}}_h}{\Omega;a}}
	,\qquad 
	\textbf{error}_p := \frac{\lnorm{p_h^{\mathrm{G}}-p^{\mathrm{e}}_h}{\Omega}}{\lnorm{p^{\mathrm{e}}_h}{\Omega}}.
\end{equation}
We only consider homogeneous Neumann boundary conditions $g_N=0$. The implementation is based on \cite{weinberg2018fast}.
\paragraph{Example 1}
We first consider problem (\ref{dualStrongForm}) with a constant permeability coefficient $A=1$ and a source term $f(x,y) = 2  \pi^2  cos(\pi x)  cos(\pi y)$. We choose $h=1/100$, and take $\gamma = 1$ for the augmentation parameter. In \Cref{fig:example1_relerr}, we show the convergence of the mixed MS-GFEM method to the fine-scale solution $(\boldsymbol{u}^{\mathrm{e}}_h,p^{\mathrm{e}}_h)$. Clearly, we see that both the velocity and pressure errors decay exponentially with respect to the number of local bases $n_{loc}$ as predicted by the theory, and the decay rates \ma{improve} with increasing oversampling sizes.
\begin{figure}
	\centering
	\subfloat{{\includegraphics[scale=0.4]{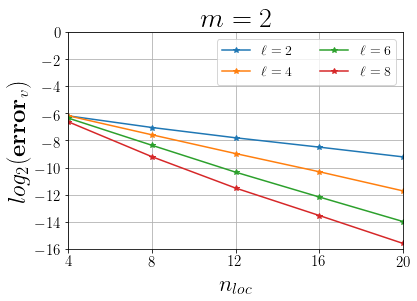} }}%
	\subfloat{{\includegraphics[scale=0.4]{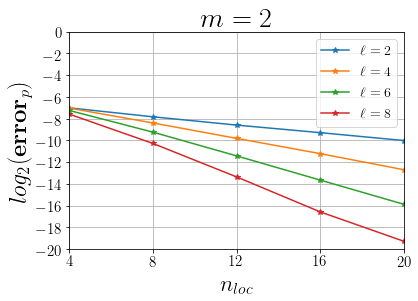} }}%
	\caption{Example 1: $\log_{2}(\textbf{error}_v)$ (left) and $\log_{2}(\textbf{error}_p)$ (right) for different numbers of local bases $n_{loc}$ and oversampling layers $\ell$. The oversampling size is $\ell h$.}
	\label{fig:example1_relerr}%
\end{figure}

\paragraph{Example 2}
This example is taken from \cite{chen2016least}. Here, we consider a high contrast permeability coefficient $A$, which is visualized in \Cref{fig:example2_A} (left), and take $\gamma = \alpha_1^{-1} = 10^{-3}$ and a finer underlying mesh with $h=1/240$. The source
term $f$ is the same as in Example 1. \Cref{fig:example2_nloc_l} shows the relative errors between the mixed MS-GFEM solution $(\boldsymbol{u}_h^{\mathrm{G}}, p_h^{\mathrm{G}})$ and the fine-scale solution $(\boldsymbol{u}^{\mathrm{e}}_h,p^{\mathrm{e}}_h)$ for different numbers of local bases and oversampling layers. We observe that even with a high contrast coefficient, numerical results of our method still agree well with theoretical predictions. Recall that we enriched the velocity multiscale space for proving inf-sup stability for the method. One might ask whether the enrichment is an artifact of the analysis or it is really necessary for the method to perform well. To test this, we omit the enrichment space in the construction of the multiscale space, and display the resulting errors in \Cref{fig:example2_nloc_l_Ven}. The results clearly show that the method without enrichment is highly unstable and inaccurate. Therefore, the enrichment of the velocity space is indeed a practical necessity. 
\begin{figure}
	\captionsetup[subfloat]{labelformat=empty}
	\centering
	\subfloat[\centering $A$]{{\includegraphics[scale=0.137]{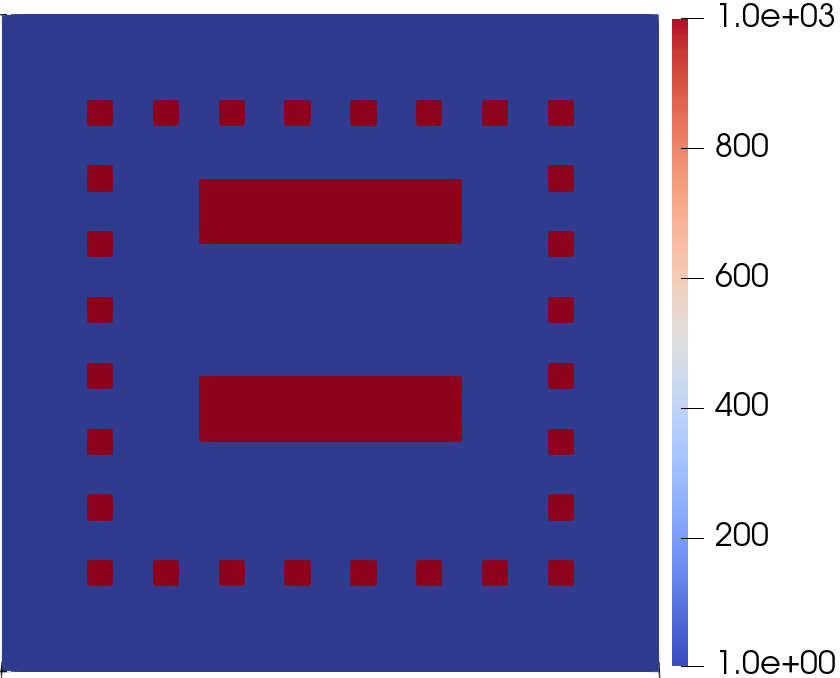} }}%
	\subfloat[\centering $log_{10}(A)$]{{\includegraphics[scale=0.165]{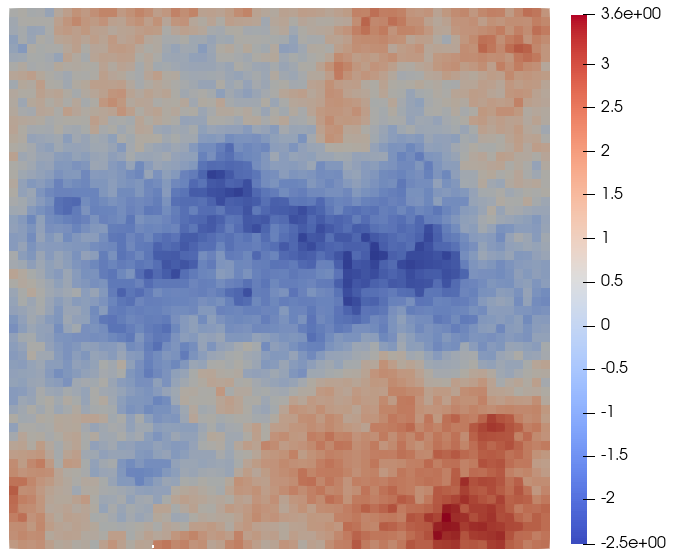} }}%
    \subfloat[\centering $\boldsymbol{u}_h^G$]{{\includegraphics[scale=0.11]{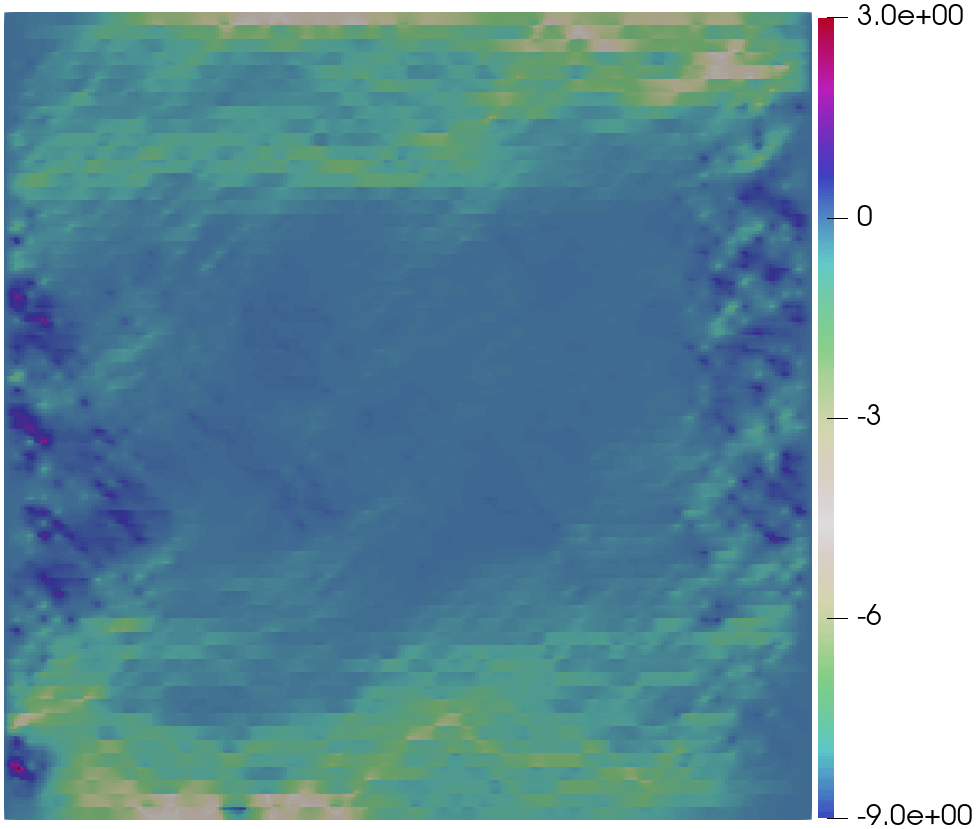} }}%
	\caption{Coefficient $A$ for example 2 (left) and 3 (middle). The $x$-component of $\boldsymbol{u}_h^G$ (right) for example 3 with $m=6, l = 15, n_{loc}=6$. }
	\label{fig:example2_A}%
\end{figure}
 
\begin{figure} 
	\centering
	\subfloat{{\includegraphics[scale=0.40]{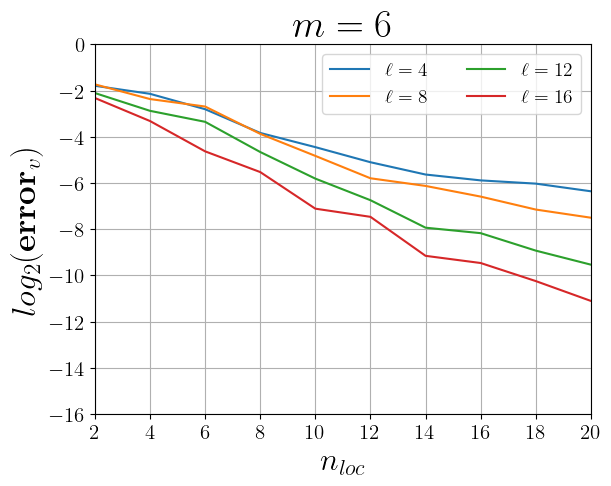} }}%
	\quad
	\subfloat{{\includegraphics[scale=0.40]{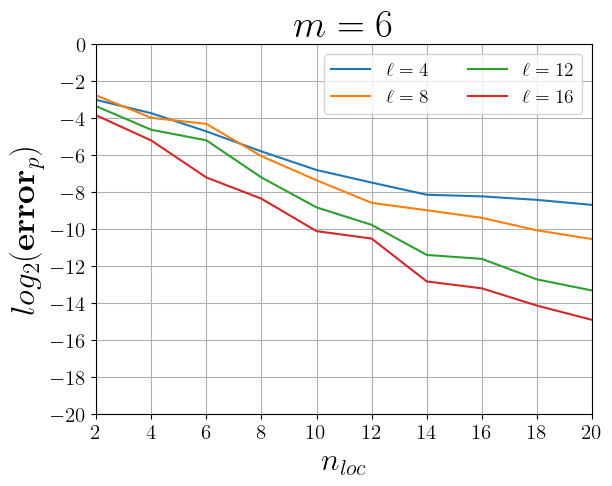} }}%
	\caption{Example 2: $\log_{2}(\textbf{error}_v)$ (left) and $\log_{2}(\textbf{error}_p)$ (right) for $m=6$.}
	\label{fig:example2_nloc_l}%
\end{figure}

\begin{figure}
	\centering
	\subfloat{{\includegraphics[scale=0.40]{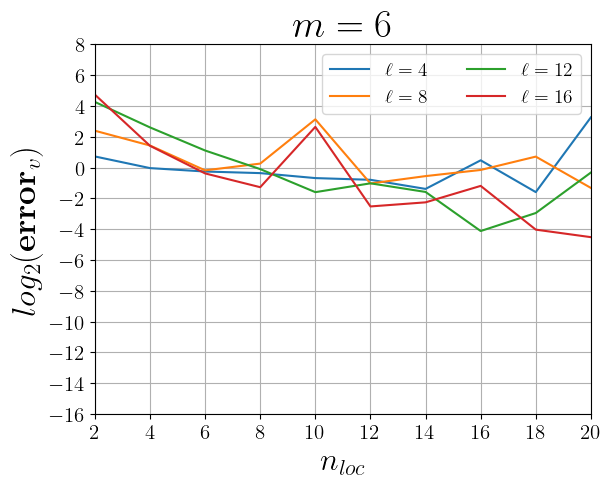} }}%
	\quad
	\subfloat{{\includegraphics[scale=0.40]{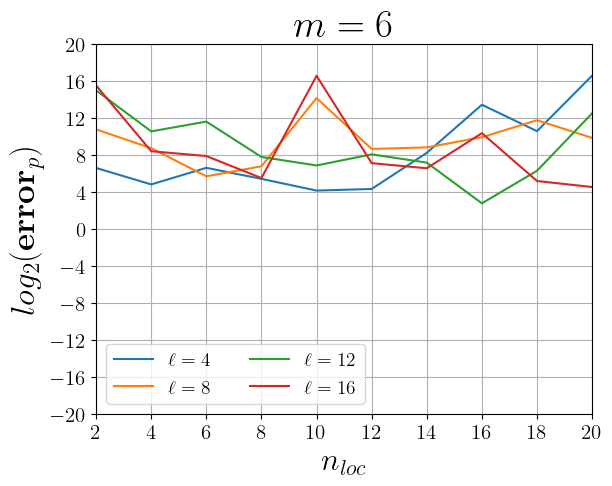} }}%
	\caption{Example 2: $\log_{2}(\textbf{error}_v)$ (left) and $\log_{2}(\textbf{error}_p)$ (right) without enrichment.}
	\label{fig:example2_nloc_l_Ven}%
\end{figure}
\paragraph{Example 3}
Finally, we consider a more realistic coefficient $A$ as illustrated in \Cref{fig:example2_A} (right), a part of the top layer of the SPE10 benchmark. We take $h=1/240$, and set the source term as
\begin{equation*}
	f(x,y)=
	\left\{\begin{array}{lll}
		100 \quad &\text{if } x < 0.1 \text{ and } y < 0.1  , \\
		-100 \quad &\text{if } x > 0.9\text{ and }  y > 0.9, \\
		0 \quad &\text{otherwise.}\\
	\end{array}\right.
\end{equation*}
For the augmentation parameter, we choose $\gamma = \alpha_1^{-1}$. \Cref{fig:example3_nloc_l} shows the decay of the error with respect to the number of local bases and oversampling size. To test how the method behaves with respect to different augmentation parameters, we vary $\gamma$ from $10^{-6}$ to $10^{3}$ and plot the relative errors in \cref{fig:example3_gamma_nloc}. It can be seen that the $L^{2}$ errors of the velocity and pressure decrease for smaller $\gamma$ within a certain range, and they remain basically unchanged for $\gamma$ small or large enough. On the other hand, the $L^{2}$ error of the divergence of the velocity ($\textbf{error}_{div}$, defined similarly to (\ref{relative-errors})) decreases slightly for larger $\gamma$ less than $10^{-1}$. These results show that in practice our method can work well without augmentation.

To demonstrate the efficiency of our method, we compute the mixed MS-GFEM solution $(\boldsymbol{u}_h^{\mathrm{G}},p_h^{\mathrm{G}})$ with $m=6$, $l=15$, and $n_{loc}=6$ (see \cref{fig:example2_A}). The relative errors $\textbf{error}_v$ and $\textbf{error}_p$ are $1.68\%$ and $0.24\%$, respectively, and the linear systems for computing $(\boldsymbol{u}_{h}^{\mathrm{e}}, p_{h}^{\mathrm{e}})$ and $(\boldsymbol{u}_h^{\mathrm{G}},p_h^{\mathrm{G}})$ involve $173280$ and $744$ degrees of freedom, respectively. Hence, the mixed MS-GFEM gives a good approximation with the number of degrees of freedom reduced by a factor of more than $200$.

\begin{figure}
	\centering
	\subfloat{{\includegraphics[scale=0.40]{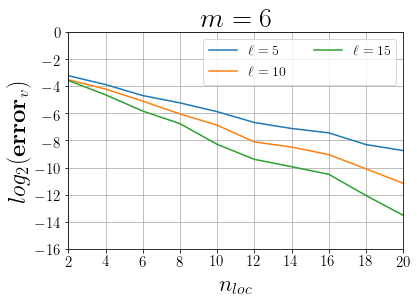} }}%
	\quad
	\subfloat{{\includegraphics[scale=0.40]{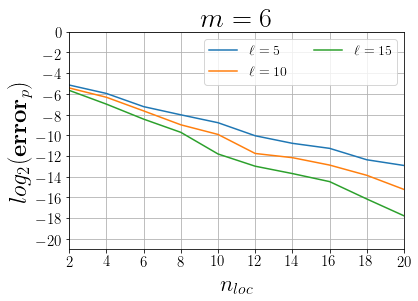} }}%
	\caption{Example 3: $\log_{2}(\textbf{error}_v)$ (left) and $\log_{2}(\textbf{error}_p)$ (right) for $m=6$.}
	\label{fig:example3_nloc_l}%
\end{figure}
\begin{figure}
	\centering
	\subfloat{{\includegraphics[scale=0.25]{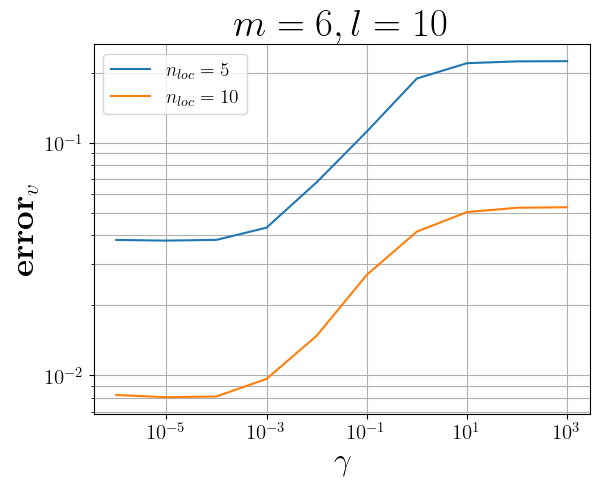} }}%
	\subfloat{{\includegraphics[scale=0.25]{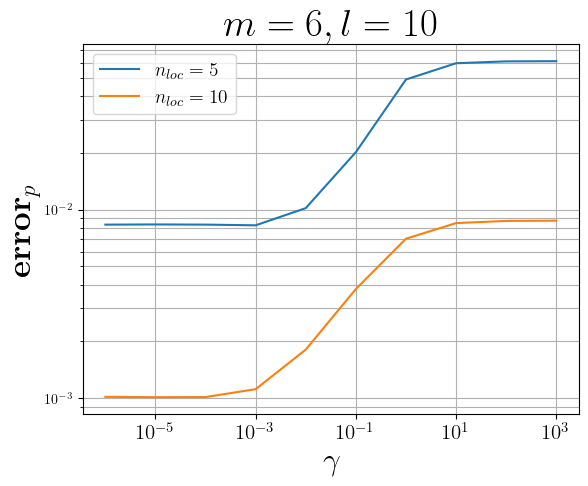} }}%
        \subfloat{{\includegraphics[scale=0.30]{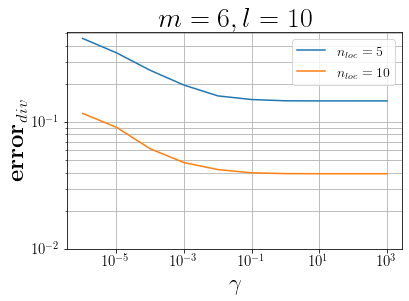} }}%
	\caption{Example 3: $\textbf{error}_v$ (left), $\textbf{error}_p$ (middle), and $\textbf{error}_{div}$ for different augmentation parameters $\gamma$.}
	\label{fig:example3_gamma_nloc}%
\end{figure}

\section{Conclusions}
We have developed a novel multiscale mixed finite element method for solving continuous and discretized elliptic problems with rough coefficients. Two important issues concerning the method -- approximation and stability -- were addressed in this paper. First, we constructed local approximation spaces with exponential accuracy by following the framework of the MS-GFEM. Second, following the general theory of mixed finite elements, we built trial spaces that are inf-sup stable by suitably enriching the global approximation spaces arising in the MS-GFEM. We performed a detailed analysis of our method in the case of general $L^{\infty}$-coefficients that shows it is stable and exponentially convergent. 

There are several aspects that have not been addressed in this paper and need to be investigated
in the future. While numerical results indicate that the method works without modifying the variational formulation, i.e., \ma{for} $\gamma = 0$, our analysis does not cover this case. It remains to prove convergence of our method in this case. Furthermore, from a practical point of view, the local eigenproblems
in our method are quite complicated due to the presence of the harmonic and divergence free constraints. An efficient way of solving these eigenproblems in the three-dimensional case is needed. \ma{Nevertheless, they are small and local}. Another important issue is an adaptive selection of the \ma{number of} local basis functions, which is essential for practical applications.

\appendix

\section{Vector Potentials and Stream Functions}
\label{sec:appendix}

\begin{lemma}
	\label{exVecPotnew}
Let $\Omega\subset \mathbb{R}^d$ ($d=2,3$) be a bounded Lipschitz domain such that $\mathbb{R}^d\setminus\overline{\Omega}$ is connected. Moreover, let $\Gamma:=\partial \Omega= \bigcup_{k=1}^K \Gamma_k$, $K\in\mathbb{N}$, with disjoint, relatively open and simply connected  Lipschitz surface patches $\Gamma_k\subset \partial\Omega$ satisfying $\operatorname{dist}(\Gamma_k,\Gamma_l)>0$ for all $1\leq k\neq l\leq K$. 
	\begin{itemize}
		\item [(i)] Assume that $d=2$. Then, there exists a linear and continuous operator $\mathcal{S}_{\Gamma}: \hdi{\Gamma}{0}{\Omega} \longrightarrow H_{\Gamma}^1(\Omega)$ such that
		\begin{equation*}
			\cuScal \,\mathcal{S}_{\Gamma}\boldsymbol{u} = \boldsymbol{u},\quad \text{ for all } \boldsymbol{u}\in\hdi{\Gamma}{0}{\Omega}.
		\end{equation*}
		
		\item [(ii)] Let $d=3$. Then, there exists a linear and continuous operator 
	\begin{equation*}
		\mathcal{S}_{\Gamma} : \hdi{\Gamma}{0}{\Omega} 
		\longrightarrow (H_{\Gamma}^1(\Omega))^3\cap (H^1(\mathbb{R}^3))^3
	\end{equation*}
	with 
	\begin{equation*}
		\cu \mathcal{S}_{\Gamma}\boldsymbol{u} = \boldsymbol{u},\quad \text{ for all } \boldsymbol{u}\in\hdi{\Gamma}{0}{\Omega}.
	\end{equation*}
	
	\end{itemize}
\end{lemma}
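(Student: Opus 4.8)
\textbf{Proof proposal for \cref{exVecPotnew}.}

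The plan is to reduce both statements to classical results on vector potentials on the full space and on bounded domains, using an extension-by-zero argument that is made possible precisely by the hypothesis that $\boldsymbol{u}$ has vanishing normal trace on the \emph{whole} boundary $\Gamma = \partial\Omega$ (note that in our application the operator is applied with $\Gamma = \partial\Omega$, i.e. $\hdi{\Gamma}{0}{\Omega} = \hdi{0}{0}{\Omega}$, even though locally we only need vanishing normal trace on part of the boundary — the oversampling domains $\omega^*$ are chosen to satisfy the hypotheses here). First I would treat $d=3$, which is the case used in the compactness proof and in the discrete analysis. Given $\boldsymbol{u}\in\hdi{0}{0}{\Omega}$, extend it by zero to $\tilde{\boldsymbol{u}}\in L^2(\mathbb{R}^3)^3$. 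The vanishing normal trace guarantees $\di\tilde{\boldsymbol{u}} = 0$ in $\mathcal{D}'(\mathbb{R}^3)$, and $\tilde{\boldsymbol{u}}$ has compact support. Since $\mathbb{R}^3\setminus\overline{\Omega}$ is connected (so that $\mathbb{R}^3$ minus the support behaves well) one invokes the standard result (e.g. \cite{girault2012finite}, Theorem I.3.6, or the construction via the Biot--Savart kernel / solving $-\Delta\boldsymbol{w} = \tilde{\boldsymbol{u}}$ componentwise and setting $\mathcal{S}\boldsymbol{u} = \cu\boldsymbol{w}$) to obtain $\tilde{\boldsymbol{v}}\in H^1(\mathbb{R}^3)^3$ with $\cu\tilde{\boldsymbol{v}} = \tilde{\boldsymbol{u}}$ and $\|\tilde{\boldsymbol{v}}\|_{H^1(\mathbb{R}^3)}\le C\|\tilde{\boldsymbol{u}}\|_{L^2(\mathbb{R}^3)} = C\|\boldsymbol{u}\|_{L^2(\Omega)}$; the construction is linear. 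Restricting to $\Omega$ gives $\mathcal{S}_{\Gamma}\boldsymbol{u} := \tilde{\boldsymbol{v}}|_{\Omega}$.

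The remaining issue for $d=3$ is to upgrade $\mathcal{S}_{\Gamma}\boldsymbol{u}$ to lie in $H^1_{\Gamma}(\Omega)^3 = (H^1_0(\Omega))^3$, i.e. to arrange vanishing full trace on $\Gamma$ — the raw Biot--Savart potential does not vanish on $\partial\Omega$. Here I would use the standard gauge-correction trick: since $\cu\tilde{\boldsymbol{v}} = \boldsymbol{u} = 0$ outside $\Omega$ and $\mathbb{R}^3\setminus\overline\Omega$ is connected, $\tilde{\boldsymbol{v}} = \nabla q$ in $\mathbb{R}^3\setminus\overline\Omega$ for some $q\in H^1_{loc}$; extend $q$ to a function $Q\in H^1(\mathbb{R}^3)$ (using a bounded extension operator on $\mathbb{R}^3\setminus\overline\Omega$, after cutting off at infinity and correcting the constant on each component of the exterior) and replace $\tilde{\boldsymbol{v}}$ by $\tilde{\boldsymbol{v}} - \nabla Q$. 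This new potential still has curl equal to $\boldsymbol{u}$, still lies in $H^1(\mathbb{R}^3)^3$ with a controlled norm, and now vanishes in the exterior of $\Omega$, hence its trace on $\Gamma$ vanishes, giving $\mathcal{S}_{\Gamma}\boldsymbol{u}\in (H^1_0(\Omega))^3\cap (H^1(\mathbb{R}^3))^3$ with the stated continuity. For $d=2$, the argument is the scalar analogue: extend $\boldsymbol{u}$ by zero, observe the divergence-free compactly supported field has a stream function $\psi\in H^1(\mathbb{R}^2)$ with $\cuScal\psi = \tilde{\boldsymbol{u}}$ (unique up to a constant), note $\psi$ is locally constant on the connected exterior $\mathbb{R}^2\setminus\overline\Omega$, and normalize that constant to zero so $\psi|_{\Gamma} = 0$, giving $\mathcal{S}_{\Gamma}\boldsymbol{u} := \psi|_{\Omega}\in H^1_0(\Omega)$, again linearly and continuously.

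The main obstacle I anticipate is not the existence of \emph{a} vector potential — that is classical — but the simultaneous bookkeeping required to get (a) the vanishing \emph{full} boundary trace on $\Gamma$ (via the exterior gauge correction, which is where connectedness of $\mathbb{R}^d\setminus\overline\Omega$ and the hypothesis on the structure of $\Gamma = \bigcup\Gamma_k$ enter), (b) linearity and boundedness of the whole map as an operator $\hdi{\Gamma}{0}{\Omega}\to H^1_\Gamma$, and (c) in 3D, membership in $H^1(\mathbb{R}^3)^3$ globally, which is needed so that the N\'ed\'elec interpolation $\Sigma_h$ in \cref{sec:proofs_nearlyExponentialDisc} is well defined on $\boldsymbol{\tilde{W}}_h(\omega^*)$ via \cite[Lemma 5.38]{monk2003finite}. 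I would organize the proof so that the exterior extension operator and the cutoff-at-infinity step are stated as the technical core, and cite \cite{girault2012finite} and \cite{monk2003finite} for the underlying regularity and trace lemmas, keeping the gauge-correction computation at the level of a short paragraph rather than spelling out every estimate.
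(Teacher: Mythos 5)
The paper's own ``proof'' of \cref{exVecPotnew} is a one-line citation: for $d=3$ it invokes the regular-potential result with \emph{mixed} boundary conditions of \cite[Remark 5.4 (ii)]{bauer2016maxwell} and states that $d=2$ follows by the same arguments. Your self-contained construction is therefore a different route, but it has a substantive gap: it only treats the case $\Gamma=\partial\Omega$, and your premise that ``in our application the operator is applied with $\Gamma=\partial\Omega$'' is not how the lemma is used. In the paper the operator is applied on the oversampling domains with $\Gamma=\partial\omega^*\cap\partial\Omega$, i.e.\ on $\hdi{N}{0}{\omega^*}$ with target $(H^1_N(\omega^*))^3$: the normal trace of $\boldsymbol{u}$ (and the trace of the potential) vanishes only on the part of $\partial\omega^*$ lying on $\partial\Omega$, which is empty for interior subdomains, and the generalized $a$-harmonic fields $\boldsymbol{u}^{\mathrm{e}}-\boldsymbol{\psi}_i$ have nonzero flux across $\partial\omega^*\cap\Omega$. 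Your key step --- extension of $\boldsymbol{u}$ by zero to a divergence-free field on $\mathbb{R}^d$ --- is exactly what fails there: the zero extension acquires a distributional divergence supported on $\partial\omega^*\setminus\Gamma$. So the proposal does not establish the statement in the partial-boundary (or empty-boundary) form in which it is actually needed; handling those cases (e.g.\ by a divergence-free extension across $\partial\omega^*\cap\Omega$, or directly via the mixed-boundary theory) is precisely the content of the cited reference.

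Even in the full-boundary case you do treat, the gauge-correction step is not justified as written. Connectedness of $\mathbb{R}^3\setminus\overline\Omega$ alone does not make the curl-free field $\tilde{\boldsymbol{v}}$ a gradient in the exterior: for $\Omega$ a solid torus and $\boldsymbol{u}$ a divergence-free field circulating along the torus with zero normal trace, the circulation of $\tilde{\boldsymbol{v}}$ around a loop linking $\Omega$ equals the nonzero flux of $\boldsymbol{u}$ through a spanning cross-section, so no exterior potential $q$ exists. What is needed is triviality of the first de Rham cohomology of the exterior, which must be extracted from the hypotheses on the patches $\Gamma_k$ (under the literal reading $\Gamma=\partial\Omega$ they force a simply connected boundary, but you never use this). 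In addition, to keep the corrected field $\tilde{\boldsymbol{v}}-\nabla Q$ in $(H^1(\mathbb{R}^3))^3$ you need $\nabla Q\in (H^1(\mathbb{R}^3))^3$, i.e.\ an $H^2$-extension of the exterior scalar potential across the Lipschitz boundary, a detail the sketch glosses over. These topological and extension issues are exactly what the mixed-boundary-condition result of \cite{bauer2016maxwell} packages up, which is why the paper simply cites it.
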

\begin{proof}
	The proof for the case $d=3$ can be found in \cite[Remark 5.4 (ii)]{bauer2016maxwell} and the case $d=2$ works with the same arguments. 
\end{proof}


\begin{lemma}
	\label{lem:discrete_stream_function}
	Let $d=2$ and $\boldsymbol{u}_h \in \boldsymbol{K}_{h,N}(\omega^*)$. Then, there exists ${\phi}_h\in\mathcal{P}_1(\Omega,\mathcal{T}_h)\cap H_N^1(\omega^*)$ such that $\boldsymbol{u}_h = \cuScal\, {\phi}_h.$ Here, $\mathcal{P}_1(\Omega,\mathcal{T}_h)$ denotes the space of piecewise linear function with respect to the mesh $\mathcal{T}_h$. 
\end{lemma}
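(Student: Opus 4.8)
\textbf{Proof plan for \cref{lem:discrete_stream_function}.}
The statement asserts that every divergence-free lowest-order Raviart--Thomas vector field on $\omega^*$ with vanishing normal trace on $\partial\omega^*\cap\partial\Omega$ is the scalar curl of a continuous piecewise linear function vanishing on $\partial\omega^*\cap\partial\Omega$. The plan is to exploit the discrete exact sequence relating the lowest-order Lagrange elements $\mathcal{P}_1(\omega^*,\mathcal{T}_h)$, the lowest-order Raviart--Thomas elements $\boldsymbol{V}_h(\omega^*)$, and the piecewise constants $Q_h(\omega^*)$ via the rotated gradient $\cuScal$ and the divergence. In two dimensions, for any $\phi_h \in \mathcal{P}_1(\omega^*,\mathcal{T}_h)$ one has $\cuScal\,\phi_h \in \boldsymbol{V}_h(\omega^*)$ and $\di(\cuScal\,\phi_h)=0$; the key fact is that this map is \emph{surjective} onto the divergence-free subspace of $\boldsymbol{V}_h(\omega^*)$ up to the kernel given by locally constant functions, provided $\omega^*$ is connected (which holds since $\omega^*$ is a Lipschitz domain, assumed here to satisfy the hypotheses of \cref{exVecPotnew}).

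First I would recall that $\boldsymbol{u}_h \in \boldsymbol{K}_{h,N}(\omega^*) \subset \boldsymbol{V}_h(\omega^*) \cap \hdi{}{0}{\omega^*}$, i.e.\ $\boldsymbol{u}_h$ is a genuinely divergence-free $\hdi{}{}{\omega^*}$ field. By \cref{exVecPotnew}(i) applied with $\Omega$ replaced by $\omega^*$ and $\Gamma = \partial\omega^*$ (noting that for the interior part of the analysis one actually needs the stream function to vanish only on $\partial\omega^*\cap\partial\Omega$; this is handled by the version of \cref{exVecPotnew} with $\Gamma$ a proper subset, as used elsewhere in the paper), there exists $\phi \in H^1_N(\omega^*)$ with $\cuScal\,\phi = \boldsymbol{u}_h$ in $L^2(\omega^*)$. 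The point is that $\phi$ is only $H^1$ a priori, not piecewise linear. To upgrade it, I would argue elementwise: on each $K \in \mathcal{T}_h$, $\cuScal\,\phi = \boldsymbol{u}_h|_K$ is a constant vector (since lowest-order RT fields restricted to an element that carry zero divergence reduce to constants — the general RT shape function is $\boldsymbol{a} + b\,\boldsymbol{x}$ with $\di = db$, so zero divergence forces $b=0$). Hence $\nabla\phi$ is constant on each $K$, so $\phi$ is affine on each element; combined with $\phi \in H^1(\omega^*)$, which forces continuity across interelement edges, we conclude $\phi \in \mathcal{P}_1(\omega^*,\mathcal{T}_h) \cap C^0(\omega^*)$. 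Finally $\phi \in H^1_N(\omega^*)$ gives $\phi = 0$ on $\partial\omega^*\cap\partial\Omega$ in the trace sense, which for a continuous piecewise linear function means pointwise vanishing there. Setting $\phi_h := \phi$ completes the argument.

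The main obstacle — and the step requiring the most care — is justifying that the $\hdi{}{0}{\omega^*}$ stream function from \cref{exVecPotnew} can be taken with the \emph{partial} boundary condition on $\partial\omega^*\cap\partial\Omega$ only, rather than on all of $\partial\omega^*$, and that this is compatible with the definition of $\boldsymbol{K}_{h,N}(\omega^*)$ (which imposes $\boldsymbol{u}_h\cdot\boldsymbol{\nu} = 0$ only on $\partial\omega^*\cap\partial\Omega$). Here one uses that the normal trace condition on that portion of the boundary translates, for the stream function, into the tangential derivative of $\phi$ vanishing there, hence $\phi$ being locally constant on each connected component of $\partial\omega^*\cap\partial\Omega$, which can be normalized to zero. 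An alternative, cleaner route avoiding \cref{exVecPotnew} altogether would be purely combinatorial/topological: construct $\phi_h$ directly by fixing its nodal value at one vertex and propagating along edges using the fluxes of $\boldsymbol{u}_h$ (well-definedness of the propagation is exactly the discrete divergence-free condition around each interior vertex, plus simple-connectedness of $\omega^*$ so that the propagation is path-independent), then check the boundary normalization. I would present the first route for brevity, falling back on the second only if the partial-trace subtlety proves delicate, and I expect the whole proof to be short once the elementwise "zero divergence $\Rightarrow$ locally constant RT field" observation is in place.
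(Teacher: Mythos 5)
Your argument is correct, but it takes a different route from the paper: the paper disposes of this lemma with a one-line citation to \cite[Theorem 3.7]{scheichl2000iterative}, which constructs discrete stream functions for divergence-free Raviart--Thomas fields directly (essentially the nodal/flux-propagation construction you sketch as your ``alternative, cleaner route''). Your primary route is a nice self-contained substitute: the key observation that a lowest-order RT field with zero divergence is constant on each element (since the shape functions are $\boldsymbol{a}+b\boldsymbol{x}$ with $\di = db$) is correct, and it does immediately upgrade the continuous stream function to an elementwise affine one; piecewise polynomial plus $H^1$-conformity then gives continuity, hence membership in $\mathcal{P}_1\cap H^1_N(\omega^*)$. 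Note also that the ``partial-trace subtlety'' you flag is not actually an obstacle you need to resolve yourself: \cref{exVecPotnew} is stated precisely for a boundary portion $\Gamma$ consisting of well-separated, simply connected patches and is applied in the paper with $\Gamma=\partial\Omega\cap\partial\omega^*$ (this is the operator $\mathcal{S}_N$ in (\ref{discrete_vector_potentials})), and the paper assumes the oversampling domains satisfy its hypotheses; so your first route closes without the fallback. What each approach buys: the citation is shorter and independent of the existence of a continuous stream function with partial boundary conditions, while your argument is self-contained within the paper's toolkit and makes transparent why the discrete stream function inherits both the piecewise-linear structure and the boundary condition; its only cost is that it leans on \cref{exVecPotnew}, i.e.\ on the geometric hypotheses on $\omega^*$, which the purely combinatorial construction (path-independence of edge propagation, using the discrete divergence-free condition and simple-connectedness of $\omega^*$) would avoid in essentially the same way as the cited reference.
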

\begin{proof}
	This is a consequence of \cite[Theorem 3.7]{scheichl2000iterative}.
\end{proof}

\printbibliography 

\end{document}